\theoremstyle{plain}
\newtheorem{theorem}{Theorem}
\newtheorem{lemma}{Lemma}
\newtheorem{coro}{Corollary}
\newtheorem{prop}{Proposition}
\theoremstyle{remark}
\newtheorem{definition}{Definition}
\newtheorem{assumption}{Assumption}
\newcommand{\udl}{\underline}
\newcommand{\ovl}{\overline}
\newcommand{\ps}[1]{\langle #1\rangle}
\newcommand{\bP}{{\mathbb P}}
\newcommand{\bE}{{\mathbb E}}
\newcommand{\bR}{{\mathbb R}}
\newcommand{\bN}{{\mathbb N}}
\newcommand{\cC}{{\mathcal C}}
\newcommand{\cF}{{\EuScript F}}
\newcommand{\1}{{\mathds{1}}}
\renewcommand{\le}{\leqslant}
\renewcommand{\ge}{\geqslant}
\DeclareMathOperator{\leb}{Leb}
\newcommand{\reels}{\mathbb{R}}
\newcommand{\dr}{\,\mathrm{d}}
\newcommand{\ex}{\mathrm{e}}
\newcommand\ud[2]{\underline{#1}_{\,#2}}
\newcommand\p[1]{\left( #1 \right)}
\newcommand{\espcond}[2]{\mathbb{E}\mathopen{}\left[#1\middle|#2\right]}
\newcommand{\proba}{\mathbb{P}}
\newcommand\norm[1]{\left\lVert #1 \right\rVert}
\newcommand\abs[1]{\left\lvert #1 \right\rvert}
\begin{document}

\title{Stochastic mirror descent for \\nonparametric  adaptive importance sampling}



\author{{Bianchi}~{P.},
{Delyon}~{B.},
{Portier}~{F.} and 
{Priser}~{V.}}
\maketitle

\begin{abstract}

This paper addresses the problem of approximating an unknown probability distribution with density $f$ - which can only be evaluated up to an unknown scaling factor - with the help of a sequential algorithm that produces at each iteration $n\geq 1$ an estimated density $q_n$.
The proposed method optimizes the Kullback-Leibler divergence using a mirror descent (MD) algorithm directly on the space of density functions, while a stochastic approximation technique helps to manage between algorithm complexity and variability. One of the key innovations of this work is the theoretical guarantee that is provided for an algorithm with a fixed MD learning rate  \(\eta \in (0,1 )\). The main result is that the sequence \(q_n\) converges almost surely to the target density \(f\) uniformly on compact sets. Through numerical experiments, we show that fixing the learning rate \(\eta \in (0,1 )\) significantly improves the algorithm's performance, particularly in the context of multi-modal target distributions where a small value of $\eta$ allows to increase the chance of finding all modes. Additionally, we propose a particle subsampling method to enhance computational efficiency and compare our method against other approaches through numerical experiments.

\end{abstract}

\section{Introduction}


   Consider the problem of approximating an unknown probability distribution with density $f : \mathbb R^d \to \mathbb R _{\ge 0} $ using a sequential algorithm that produces an estimated density $q_n$. The index $n\in \mathbb N$ here stands for the number of point-wise evaluations of $f$ - no evaluation of the gradient $\nabla f$ is needed- and the knowledge of the normalization constant of $f$ should not be necessary, i.e., 
   {the algorithm remains the same when, for an arbitrary constant $c>0$, $cf$ is used instead of $f$.} This framework is useful in many applications such as Bayesian inference \cite{engel2023bayesian,huang2023efficient} or variational inference \citep{blei2017variational}, reinforcement learning \cite{metelli2018policy,hanna2019importance} or stochastic optimization \citep{needell2014stochastic,guilmeau2024divergence} to name a few among the statistical learning literature.

The \textit{adaptive importance sampling} method \citep{ho+b:1992, owen+z:2000,bugallo2017adaptive} or the \textit{sequential Monte Carlo} approach \cite{del2006sequential}, are based on generating random variables according to a certain sampling distribution that evolves during the algorithm and using some re-weighting allows to obtain unbiased estimators. Depending on the problem of interest, the sampling distribution might be chosen by minimizing some discrepancy with respect to the target measure and many such different approaches have been investigated in \cite{elvira2022optimized,NEURIPS2019_ba7609ee,li2016renyi}. A leading approach, coming from the variational inference literature \citep{blei2017variational}, consists in minimizing the Kullback-Leibler (KL) divergence, defined as
$$\text{KL}(q \|f ) : =  \int \log( q / f  ) q , $$
with respect to $q$ chosen out of a parametric family of density functions. Throughout the paper, $\int h $ shall be used as a shortcut for $\int h(x) dx $. The optimization framework attached to the {variational inference} approach is attractive because of the recent development in stochastic optimization and related methods (e.g., stochastic gradient descent, variance reduction). This, in particular, allows to handle large scale problems as promoted in \cite{hoffman2013stochastic}. 

In \cite{dai2016provable,korba2022adaptive,chopin2023connection}, the \textit{mirror descent} (MD) algorithm \citep{beck2003mirror} is employed to optimize the \textit{Kullback–Leibler} divergence directly on the space of density functions and thereby avoiding parametric misspecification issues as in standard stochastic variational inference \citep{hoffman2013stochastic}. 
When applied to $\text{KL}(q \| f) $, the MD algorithm gives the following iteration, for $n\ge 0$,
\begin{align}\label{eq:MD_algo}
q_{n+1} \propto q_{n}^{1-\eta} f^{\eta }\,    \end{align}
 where $\eta\in (0,1] $ is called the \textit{learning rate}. The above iteration cannot be implemented within our framework simply because 
{$f$ is unknown (so is the normalizing constant in the above)}. 

The approach taken in this paper is to rely  on \textit{stochastic approximation} \citep{rob_mon_51} whose main idea is to resort to a sequential algorithm in which a computationally cheap 
 stochastic update is conducted at each step. 
 Suppose that $X_{n+1} $ is generated from $q_n $ and define the importance weights $w_{n+1} = f(X_{n+1}) / q_{n}(X_{n+1})     $. Let \(K_b:{\mathbb R}^d\to {\mathbb R}_{\ge 0}\) be a probability density with a mean zero and  covariance \(b^2 I_d\), where \(I_d\) is the identity matrix of size $d\times d$, and \(b>0\) is a small positive parameter known as the \emph{bandwidth}. Define the random map
$$M_{n+1}:x\in\bR^d\mapsto  w_{n+1}^\eta K_b ( x - X_{n+1} ) ,$$
and note that 
$\bE(M_{n+1})   = \{ q_{n}^{1-\eta} f  ^{\eta }\} \ast K_b , $
where \(\ast\) denotes the convolution product between functions, i.e., $f  *  g (x) = \int f(y)g(x-y)dy$ when $f,g$ are real-valued Lesbegue-integrable functions, which from well-known results from approximation theory should be near the MD iteration expressed in \eqref{eq:MD_algo}, $q_{n}^{1-\eta} f  ^{\eta }$, when $b$ is small. Having this in mind, the proposed algorithm follows from the functional iteration
\begin{equation*} 
g_{n+1} = (1-\gamma_{n+1}) g_n + \gamma_{n+1} \, M_{n+1} ,
\end{equation*}
where  \(\gamma_{n+1}\) is a positive step size converging to $0$ as \(n\) tends to infinity. The final step is given by \( q_{n+1} = (1-\lambda_{n+1}) g_{n+1}/\int g_{n+1}  + \lambda_{n+1} q_0\) where \(\int g_{n+1}\) can be easily determined through the algorithm and $q_0$ is a heavy-tailed distribution that ensures a comprehensive exploration of the space \({\mathbb R}^d\) as suggested by \cite{delyon2021safe}. The distribution $q_n$ writes as a mixture between $q_0$ and a weighted sum of $n$ kernels anchored at the particles locations. Such a sum of kernels is encountered in the wellknown context of  \textit{kernel density estimation} \citep{parzen1962}.

In the previous algorithm, the choice $\eta = 1$ might be attractive at first glance because of the bias term which is easy to analyze \citep{delyon2021safe}. However, when the dimension $d$ is large or when the function $f$ is complex, the importance weights often collapse \citep{bengtsson2008curse}, i.e., only a few weights carry-out the whole probability mass. This implies that the weights exhibit significant variance and this characteristic severely hampers the algorithm's efficiency, causing it to stagnate at excessively high weights. As noted by \cite{korba2022adaptive},
\[ 
\text{Var}\left( w_n^\eta  \right) \le \text{Var}\left(w_n \right) 
\]
for \(\eta\in(0,1)\), suggesting that choosing small $\eta$ might help to reduce the variability and avoid the degeneracy of the importance weights. In addition, as observed in practice, and similarly to several stochastic gradient descent optimization algorithm choosing a fixed stepsize allows the algorithm to explore the space of interest and thereby avoid local minima. 

\textbf{Related algorithms.} The proposed method has connections with several well-known approaches within the {adaptive importance sampling}, {sequential Monte Carlo} and {variational inference} literature. The idea of using a stochastic approximation of the MD iteration \eqref{eq:MD_algo} using an adaptive importance sampling approach was first investigated in \cite{dai2016provable} and further studied in \citep{korba2022adaptive}. In both previous papers, the importance weights $w_n$ are ``tempered'' using the MD power transform $w_n^\eta$ but their results are significantly different from the one established in this paper in that  \cite{dai2016provable} considers $\eta$ converging to $0$, and \cite{korba2022adaptive} deals with learning rate $\eta$ going to $1$, while here we study the case where $\eta $ remains fixed during the algorithm. Same weights transformations, referred to as \textit{tempering}, as well as other similar transformations e.g., \textit{clipping}, that implies a different behavior than standard weights, have been used in several {sequential Monte Carlo} algorithms \citep{neal2001annealed,koblents2015population,aufort2022tempered} (without relying specifically on kernel smoothing); see also \cite[Section 2.3.1]{del2006sequential}. For more detail on the connection between \textit{tempering} and MD, we refer to \cite{chopin2023connection}.

The proposed algorithm bears resemblance to the ones of \cite{west1993approximating,Givens1996,zhang1996nonparametric,delyon2021safe} even though the MD iteration was not considered in the previous work (i.e., $\eta = 1$). In all previous references, a kernel smoothing estimator is employed to estimate $f$. This is also done by several popular {sequential Monte Carlo} samplers as described in \citep{chopin2004central,del2006sequential} where generating from normalized $g_n$, i.e., $X_{n+1} \sim g_n / \int g_n$, is often described using two steps: (i) the \textit{selection step} chooses at random, using multinomial sampling, one particular particle $X_i$ among the existing ones; (ii) the \textit{mutation step} generates $X_{n+1}$ around $X_i$ using (for example) kernel $K_b({\cdot} - X_i)$.


Note that the proposed algorithm only requires evaluation of the target density up to a normalization constant. This differs significantly from approaches that rely on gradient evaluations as for instance the \textit{Markov chain Monte Carlo} methods proposed in  \citep{welling2011bayesian,simsekli2016stochastic} or the {variational inference} approaches detailed in \citep{liu2016stein,korba2020non,li2022sampling}.  

\textbf{Contribution and related results.} The main result of the paper is to establish the almost sure convergence (uniformly on compact sets) of the sequence $(q_n)_{n\ge 0}$ to the target density $f$, under the assumption that the bandwidth tends to zero as $n$ tends to infinity \emph{i.e.}, $b=b_n$ vanishes, and that $\gamma_n$ satisfies typical Robbins-Monro conditions \citep{rob_mon_51}, while $\eta$ might be constant during the algorithm. One important consequence of the previous is a central limit theorem, with rescaling factor $\sqrt n $, for the estimation of $\int hf $, for compactly supported test functions $h: \mathbb R^d \to \mathbb R$, using empirical weighted average of $h(X_n)$ with weights $w_n$.

Existing theoretical results on the convergence of sequence $(q_n)_{n\ge 0}$ when $\eta= 1$ might be found in \citep{west1993approximating,Givens1996,zhang1996nonparametric,delyon2021safe}.
For instance, the almost sure uniform convergence to the target density with a convergence rate and some central limit theorem for the integral estimation problem are obtained in  \cite{delyon2021safe}. 
In \cite{dai2016provable} some results concerning the weak convergence of $q_n$ to $f$ and the convergence of the \textit{Kullback-Leibler} objective are given when $\eta $ goes to $0$. In \cite{korba2022adaptive}, the almost-sure convergence is established when $\eta$ converges to $1$. 

 To the best of our knowledge, the results of the present paper are the first theoretical guarantees about the convergence of $q_n$ to $f$ when $\eta$ is fixed during the algorithm.
 The new parasitic stationary point at $0$ when $\eta<1$ complicates the proof as we need to establish that the algorithm is not trapped in the vicinity of this spurious equilibrium. In contrast, forcing $\eta$ to converge to $1$ as in \cite{korba2022adaptive} makes the algorithm behave asymptotically like the case $\eta=1$ as s studied in \citep{delyon2021safe}, which eases the proof. 

 As mentioned previously, similar types of algorithms, that involves  a power transformation of the weights $w_n^\eta$, often refereed to as tempering, have been studied within the sequential Monte Carlo literature \cite{chopin2004central,del2006sequential,douc2007limit}. To our knowledge, the results obtained are different as the evolution of the sequence $(q_n)_{n\ge 0}$ is stopped while allowing the number of particles to go to infinity \citep{chopin2004central}. This constraint is heavy because in practice one might allow the policy to change in time without constraint.

Another line of work is the nonparametric recursive estimation problem in which data is used sequentially to update the estimators \cite{devroye19801,mokkadem2009stochastic,bercu2019nonparametric}. Note that the recursive estimation framework relies on Robbins-Monro type procedure, just as we do, but the context is different because our framework requires the variational policy $(q_n)_{n\ge 0}$ to be updated and then used to draw points whereas in the recursive framework the data is distributed according to a fixed density.

\textbf{Outline.} In Section \ref{s2}, we the mathematical framework and the main algorithm along with several practical remarks. In Section \ref{s3}, we state our main result and provide a sketch of proof. In Section \ref{s4}, we consider several practical variations of the proposed method while in Section \ref{s5}, we compare them to other approaches using numerical experiments.  All the proofs are provided in the Appendix.

\section{The algorithm}\label{s2}



We consider a probability density function $f:\bR^d\to [0,+\infty)$, referred to as the target. Let $f_u:\bR^d\to [0,+\infty)$ be a Lebesgue integrable function, representing an unnormalized version of $f$. That is, there is a constant $c>0$ such that $f_u = cf$.

Let $(\Omega,\cF,\bP)$ be a probability space. Consider a sequence $(X_n)_{n\ge 1}$ of random variable on $\bR^d$.
Denote by $(\mathcal F_n)_{n\ge 0} $ the natural filtration associated to the sequence $(X_n)_{n\ge 1}$. That is, $\mathcal F_n  = \sigma (X_ 1, \dots , X_n)$
for $n \ge 1$, and $\mathcal F_0 = \{ \emptyset ,\Omega\} $. The sequence $(X_n)_{n\ge 1}$ is specified by its \textit{policy} $(q_n)_{n\ge 0}$ defined as follows.

\begin{definition}
  The sequence of random variable $ (q_n)_{n\ge 0}$ is said to be a \emph{policy} of $ (X_n)_{n\ge 1}$, if it is adapted to $(\cF_n)_{n\ge 0}$
  and if, for every $n\ge 0$, $q_n$ is a conditional probability distribution function of $X_{n+1}$ given $\cF_n$, that is, $\bE(h(X_{n+1})|\cF_n) = \int h q_n$
  for every bounded continuous function $h$ on $\bR^d$. 
\end{definition}

Define the \emph{importance weights} by
\begin{equation}
\label{eq:w}
w_{n+1} :=   \frac{f_u(X_{n+1})}{q_n(X_{n+1})}\,, \quad n\ge 1\,.    
\end{equation}
These weights play an important role in the importance sampling framework as they allow to shift the distribution from $q_n$ toward the target distribution $f$. As such, it allows to estimate without bias integrals with respect to the unnormalized target distribution as, whenever $q_n>0$ implies $f_u>0$, it holds
$$ \bE( w_{n+1} h(X_{n+1})|\cF_n) = \int h f_u\,. $$
We are now in a position to introduce our algorithm characterizing sequentially the policy $(q_n)_{n\ge 0}$ based on a sequence of unnormalized density $(g_n)_{n\ge 0}$. Let $K:\bR^d\to [0,\infty) $ be a probability density function and define $K_b(x) = b^{-d}K(x/b)$
for any $b>0$, the corresponding density with variance $b^2 I_d$.  At each step $n\geq 0$, the random variable $X_{n+1}$ is drawn from $q_n$ and the distribution $g_n$ is updated into $g_{n+1}$ as follows: 
\begin{align}
&X_{n+1} \sim q_n \nonumber\\
&   g_{n+1}(x) = (1-\gamma_{n+1}) g_n(x) + \gamma_{n+1} \, w_{n+1}^\eta K_{b_{n+1}} ( x - X_{n+1} ) \,, \qquad \forall  x\in \bR^d, \label{eq:g} 
\end{align} 
where $(\gamma_n)_{n\ge 1}$ and 
$(b_n)_{n\ge 1}$ are positive sequences, respectively referred to as the
step size and the bandwidth sequences. We set $g_0 = 0$. The next step is therefore to define $q_{n+1}$ from $g_{n+1}$, by:
\begin{align} \label{eq:q}
q_{n+1}(x) = (1-\lambda_{n+1}) \frac{g_{n+1}(x)}{\int g_{n+1}} +\lambda_{n+1}q_0(x)\,,\qquad \forall  x\in \bR^d,
\end{align}
where $q_0$ is a fixed probability density function, and $(\lambda_n)_{n\ge 1}$ is a positive sequence. Thus, we do not define $q_{n+1}$ as the normalized version of $g_{n+1}$, but as a mixture between the latter and a fixed distribution $q_0$. This mixture step will be revealed essential in our proofs, in order to ensure sufficient exploration, and thus prevent $q_n$ to converge to a spurious stationary point. The parameter $\lambda_{n+1}$ determines the tradeoff between the exploration and the adaptation to $f$.


The next proposition is given without proof as it is an easy consequence of the normalization stage in \eqref{eq:q}. 

\begin{prop}\label{prop:norm}
    The policy $(q_n)_{n \ge 0} $ obtained from \eqref{eq:q} is invariant with respect the normalization constant $c>0$. 
\end{prop}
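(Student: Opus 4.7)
The plan is to run a direct induction on $n$, showing simultaneously that (i) replacing $f_u$ by $c f_u$ multiplies $g_n$ by the common factor $c^\eta$, and (ii) this scaling cancels in \eqref{eq:q}, leaving the policy $q_n$ untouched. The point is that everything is homogeneous of a single, explicit degree in $c$, so nothing subtle happens.

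Concretely, let $\tilde f_u = c f_u$, and denote by $\tilde w_n$, $\tilde g_n$, $\tilde q_n$, $\tilde X_n$ the quantities obtained by running the algorithm with $\tilde f_u$ in place of $f_u$, coupled on the same underlying randomness so that $\tilde X_{n+1}$ is produced from $\tilde q_n$ using the same uniform/Gaussian seeds as $X_{n+1}$ from $q_n$. The base case is $g_0 = \tilde g_0 = 0$ and $q_0 = \tilde q_0$ by definition. For the inductive step, assume $\tilde q_n = q_n$ and $\tilde g_n = c^\eta g_n$. Then $\tilde X_{n+1} = X_{n+1}$ almost surely by the coupling, and from \eqref{eq:w},
\begin{equation*}
\tilde w_{n+1} = \frac{\tilde f_u(X_{n+1})}{\tilde q_n(X_{n+1})} = \frac{c f_u(X_{n+1})}{q_n(X_{n+1})} = c\, w_{n+1},
\end{equation*}
so $\tilde w_{n+1}^\eta = c^\eta w_{n+1}^\eta$. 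Plugging this into \eqref{eq:g} gives
\begin{equation*}
\tilde g_{n+1}(x) = (1-\gamma_{n+1})\, c^\eta g_n(x) + \gamma_{n+1}\, c^\eta w_{n+1}^\eta K_{b_{n+1}}(x-X_{n+1}) = c^\eta g_{n+1}(x),
\end{equation*}
which propagates (i). For (ii), since $\int \tilde g_{n+1} = c^\eta \int g_{n+1}$, the factor $c^\eta$ cancels in the ratio $\tilde g_{n+1}/\int \tilde g_{n+1}$, and from \eqref{eq:q} we obtain $\tilde q_{n+1} = q_{n+1}$.

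There is no real obstacle here: the only thing to double-check is well-definedness, i.e., that $\int g_{n+1} > 0$ whenever we normalize, which holds as soon as at least one weight $w_k^\eta$ is strictly positive (guaranteed on the event $\{f_u(X_k) > 0\}$, and in particular on the support of $q_0$ assumed to cover that of $f_u$). Beyond that, the argument is just bookkeeping of the homogeneity degree of $g_n$ in $c$.
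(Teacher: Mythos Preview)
Your proof is correct and follows exactly the approach the paper indicates: the body states the proposition ``without proof as it is an easy consequence of the normalization stage in \eqref{eq:q}'', and the appendix (start of Section~\ref{sec:sketch}) spells out the same induction, namely $\tilde g_n = c^\eta g_n$ and $\tilde q_n = q_n$ for all $n$. Your write-up is slightly more explicit about the coupling and the well-definedness of $\int g_{n+1}$, but the argument is identical.
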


The previous property is attractive because it implies that even when $f_u$ is attached to a small normalization constant $c>0$, it has no effect on the algorithm (even in the first iterations). This fact, in our proof, will be useful as it will allow us to work directly with the true target density $f$.

\begin{figure}[ht!]
    \centering
    \begin{subfigure}[b]{0.48\textwidth}
        \centering
        \includegraphics[width=0.9\textwidth]{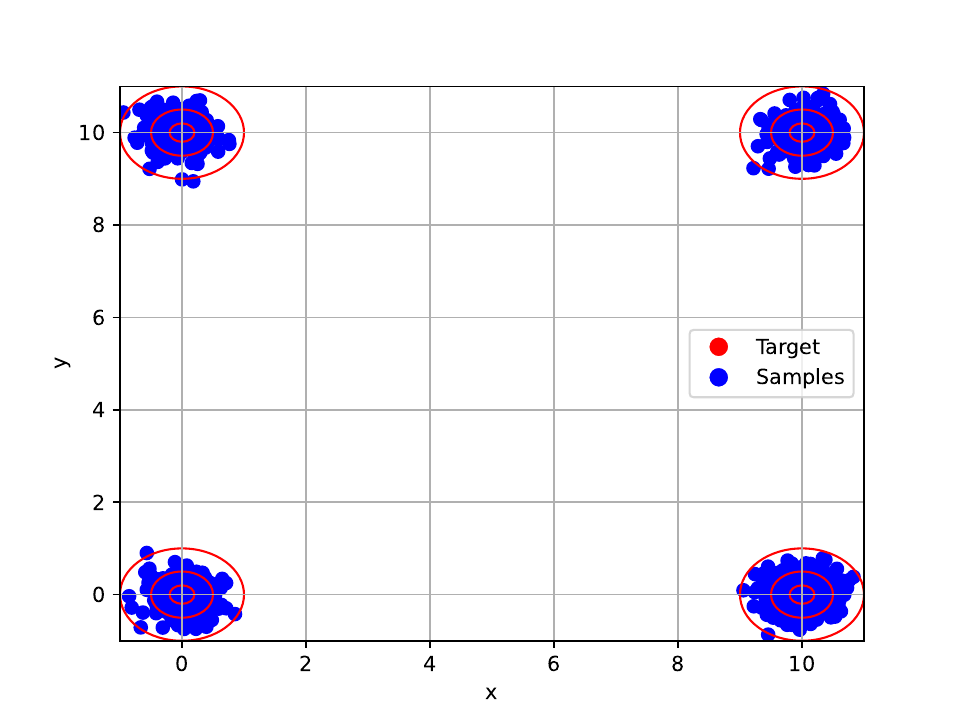}
        \caption{All the modes are found}
        \label{fig:conv}
    \end{subfigure}
    \hfill
    \begin{subfigure}[b]{0.48\textwidth}
        \centering
        \includegraphics[width=0.9\textwidth]{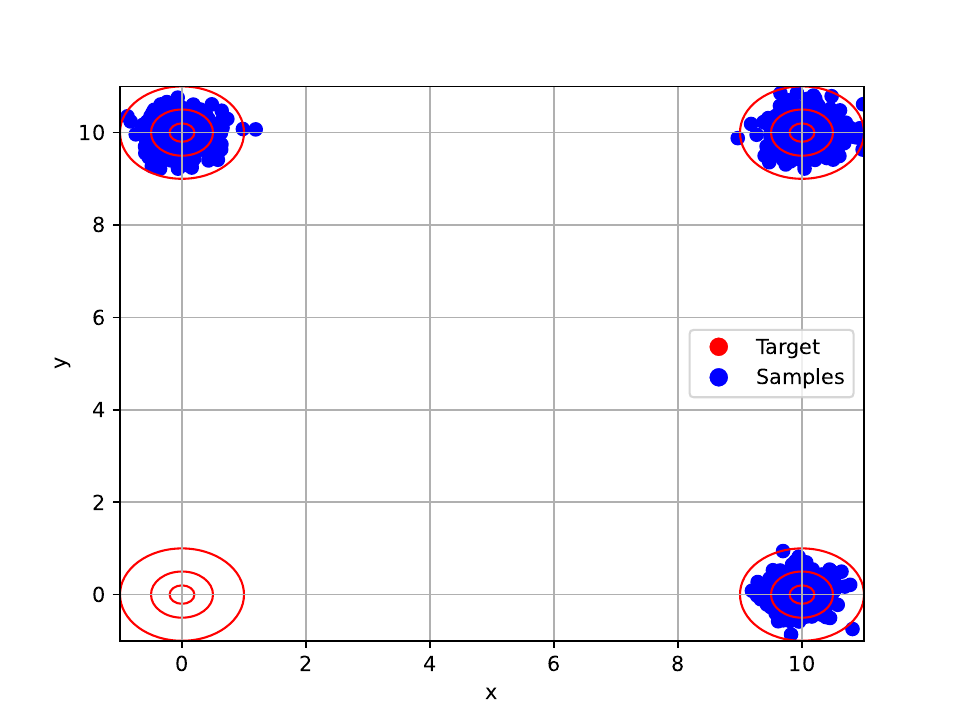}
        \caption{One mode is missing}
        \label{fig:nonconv}
    \end{subfigure}
    \begin{subfigure}[b]{0.9\textwidth}
        \centering
            \includegraphics[width=.9\textwidth]{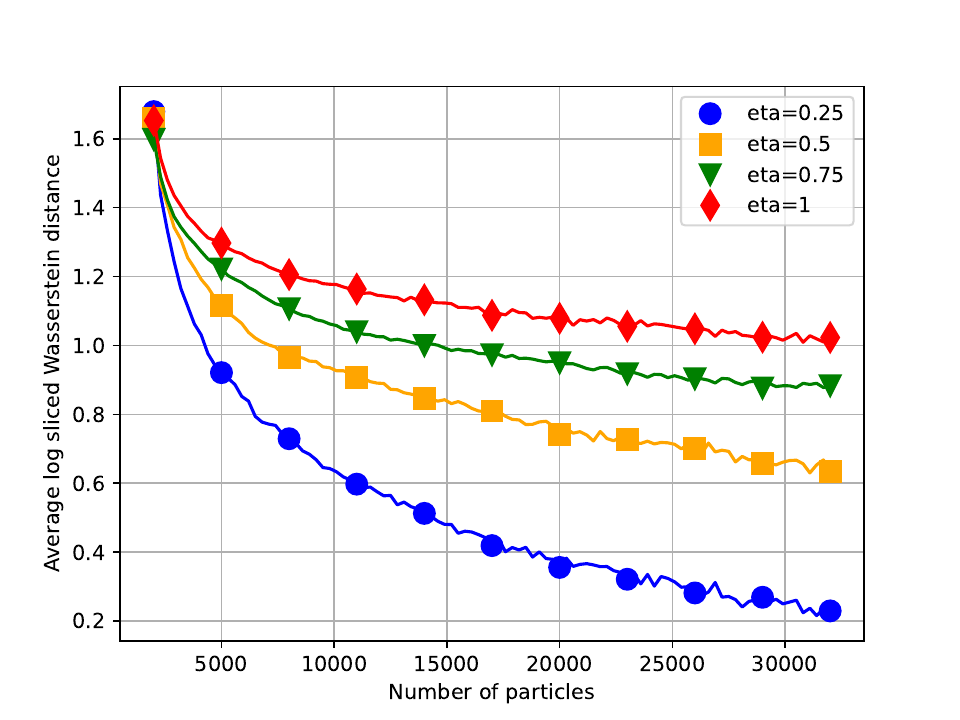}
        \caption{Average over $50$ independent runs of the logarithm of sliced Wasserstein distance for different value of $\eta$}
        \label{fig:2Dplot}
    \end{subfigure}
    \caption{The target is the multi-modal distribution described in the text. Figure \ref{fig:conv} represents samples generated from $q_n$ ($\eta= 1$) where all modes are identified while Figure \ref{fig:nonconv} has missed one mode. Figure \ref{fig:2Dplot} shows a comparison (based on the sliced Wasserstein distance) of different algorithms when varying $\eta$.}
    \label{fig:convornot}
\end{figure}

The role of $\eta $ has already been depicted as balancing between bias and variance in \cite{korba2022adaptive} as it was shown that $ 
\text{Var}( w_n^\eta  ) \le \text{Var} (w_n ) 
$ while $ \mathbb E [ w_n ^\eta] \leq 1$. One related point is it can also enable to visit extensively the domain of interest. To illustrate this claim, we now consider the extreme case where $\eta = 0 $, and we remark that in this case, the policy $(q_n)_{n \ge 0} $ obtained from \eqref{eq:q} does not depend on $f$.
 The choice $\eta = 0$ is not expected to be efficient  as it does not use the evaluations of $f$. It is nonetheless informative as it stresses that, as soon as $\eta$ is small, there is no preference between the weights. As a consequence, a new particle tends to be drawn equally from any previous particle. This is particularly attractive as in some cases the algorithm can be trapped in a small part of the domain while missing other parts.


To illustrate the previous property and thereby the importance of the parameter \(\eta\), we provide a toy example with a multi-modal target in dimension \(d=2\), where all modes are challenging to find. The target density is a mixture of four Gaussian distributions with means at \((0,0)\), \((10,0)\), \((0,10)\), and \((10,10)\), each having a variance of \(0.1 I_2\). The heavy-tailed density \(q_0\) is a Student's t-distribution with a location parameter of \((5,5)\) and a scale parameter of \(10 I_2\).
The target distribution possesses four modes. In Figure~\ref{fig:convornot}, we present two illustrative runs of our algorithm. In Figure~\ref{fig:conv}, all four modes of the target distribution are recovered while in Figure~\ref{fig:nonconv}, one mode is missing. A run of the algorithm with one mode missing can occur randomly for any value of \(\eta\), but by setting the parameter low enough, the algorithm is more likely to recover all the modes of the target distribution. To evaluate how much this impact the outcome of the algorithm, we compute the sliced Wasserstein distance, as defined in Section~\ref{sec:setup}, between the target distribution and the weighted empirical measure of the particles \((X_n,w_n)_{n \ge 1}\).
The average over $50$ independent runs of the log Wasserstein distance is  provided in
Figure~\ref{fig:2Dplot}.

\section{Main results}\label{s3}

\subsection{Almost sure convergence}\label{s3Sub}

We start by giving the assumptions needed on the target $f_u$ and the density $q_0$.

\begin{assumption} \label{hyp:fq}\ 
  \begin{enumerate}[i)]
  \item \label{hyp:fq:reg}The functions $f_u$, $q_0$, are bounded, continuous, nonnegative, and integrable on $\bR^d$. 
  \item \label{hyp:fq:fbound} There exists $r>0$ and $C_f>0$ such that $\int_{\|x\|>t}f_u(x)dx \le C_ft^{-r}$ for all $t\ge 0$.
  \item     \label{hyp:fq:f<q0}
  There exists $c>0$ such that $cf_u\le  q_0$.   
  \end{enumerate}
\end{assumption}

Assumption~\ref{hyp:fq}-\ref{hyp:fq:fbound} holds for instance if the function 
$x\mapsto \norm{x}^{r+d}f_u(x)$ is bounded in $\reels^d$. 
Assumption~\ref{hyp:fq}-\ref{hyp:fq:f<q0} ensures that the support of the target distribution $f_u$ is included in the support of $q_0$, which will implies (assuming that $\lambda _n $ is large enough) that $f_u$ can be explored thoroughly. 
This condition is necessary for the definition of the weights $(w_n)_{n\ge 1}$.
We also need to have some regularity and integrability conditions on the \textit{kernel} function $K$.

  \begin{assumption}\label{hyp:K} \ 
  \begin{enumerate}[i)]
      \item \label{hyp:K:reg}$K:\mathbb R^d\to \mathbb R_{\ge 0}$ is a bounded and Lipschitz density function and $K(0)>0$.
      \item \label{hyp:K:bound} There exist $r>0$ and $C_K>0$ such that, for all $x\in \mathbb R^d$, $(1+\|x\|^{d+r})K(x)\le  C_K$.
    \end{enumerate}
\end{assumption}
The  {Gaussian kernel}, $K(u) \propto \exp(- \|u\|^2 / 2 ) $, or the Epanechnikov Kernel, $K(u ) \propto (1- \|u\|^2 )_+ $ where $u_+= u$ when $u \ge 0$ and $0$ else, satisfy the above assumption. In the numerical experiments, the Gaussian kernel will be used.
Finally we state the assumption needed on the step-size sequence $(\gamma_n)_{n\geq 1}$, bandwidth sequence $(\gamma_n)_{n\geq 1}$ and mixture parameter $(\lambda_n)_{n\geq 1}$.
%

\begin{assumption}
  \label{hyp:rate}\ 
  \begin{enumerate}[i)]
\item  \label{hyp:rate:all_sequence}  The sequence $(\gamma_n) _{n\ge 1}$, $(b_n) _{n\ge 1}$, $(\lambda_n) _{n\ge 1}$ are decreasing to $0$.
  \item \label{hyp:rate:gamma} The exists $C_\gamma >0$ and  $n_\gamma\ge 1$ such that  for all $n\ge 1$,
$  n^{-1}\le C_\gamma \gamma_n $, and, for all $n\ge n_\gamma$, $ \gamma_{n} - \gamma_{n+1} \le \gamma_n\gamma_{n+1}$. Moreover, 
$$ \sum_{n\ge 1} \gamma_n^2 <\infty .$$
\item \label{hyp:rate:coef}
When  $  \eta\ge 1/2$, it holds that $$   \frac{n\gamma_{n}^2 \log n}{\lambda_n b_n^d } \to  0.$$ 
When $ \eta < 1/2$, we have $$\frac{n\gamma_{n}^2\log n}{\lambda_n^{2(1-\eta)} b_n^{2d(1-\eta)}} \to  0.$$


\end{enumerate}
  \end{assumption}
  
We remark, that Assumption~\ref{hyp:rate}-\ref{hyp:rate:gamma} 
holds when $\gamma_n = Cn^{-\alpha}$, for $C>0$ and $\alpha \in (1/2,1)$, or $C>1$ and $\alpha =1$.
When $\gamma_n = C n^{-1}$ and $\eta>1/2$,  Assumption  \ref{hyp:rate}-\ref{hyp:rate:coef} on ($b_n$) implies a classical condition in the kernel smoothing estimation literature, that is, \[
\lim_{n\to\infty} nb_n^d =\infty \text{ and } \lim_{n\to\infty} b_n =0\, .
\]
In non parametric estimation when the function is at least 2-times continuously differentiable and the kernel has order 2 \citep{10.1214/aos/1176345206}, the optimal bandwidth is
$  h_n =   n^{ - \frac{1}{4+d}}$. This choice is made possible by the assumption of our main result. 

Based on the previous set of assumptions, we are able to prove the almost sure convergence on compacts. This is the main result of the paper.

  \begin{theorem}
    \label{th:main}
    Consider the policy $(q_n)_{n\ge 0}$ given by~(\ref{eq:q}).
    Let Assumptions~\ref{hyp:fq},~\ref{hyp:K} and ~\ref{hyp:rate} hold true and let  $A\subset \reels^d$ be  a compact set. Then, almost surely, we have:
\[
 \lim_{n\to\infty} \sup_{x\in A}\abs{q_n(x) -f(x)} = 0\,.
\]
\end{theorem}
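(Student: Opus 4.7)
The plan is to view the recursion~\eqref{eq:g} as an infinite-dimensional stochastic approximation tracking the functional ODE $\dot g_t = f^\eta q_t^{1-\eta} - g_t$ (with $q_t$ computed from $g_t$ via \eqref{eq:q}), whose unique non-degenerate equilibrium satisfies $q_t\equiv f$. Thanks to Proposition~\ref{prop:norm}, I may replace $f_u$ by $f$ throughout. The whole argument then splits into three blocks: a priori bounds controlling the importance weights, a stochastic-approximation decomposition whose noise I kill uniformly on compacts, and a Lyapunov/ODE analysis that rules out the parasitic equilibrium $g\equiv 0$ arising when $\eta<1$.

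First I would exploit Assumption~\ref{hyp:fq}-\ref{hyp:fq:f<q0} together with \eqref{eq:q} to get the pointwise lower bound $q_n\ge \lambda_n q_0\ge c\lambda_n f$, which at once yields the deterministic weight bound $w_{n+1}\le (c\lambda_n)^{-1}$. A companion upper bound on $\int g_n$ follows from the scalar recursion $\int g_{n+1} = (1-\gamma_{n+1})\int g_n + \gamma_{n+1} w_{n+1}^\eta$ combined with $\mathbb E[w_{n+1}^\eta\mid\mathcal F_n]= \int f^\eta q_n^{1-\eta}\le 1$ (H\"older), plus a martingale argument using $\sum\gamma_n^2<\infty$. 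Tail mass of $g_n$ at infinity is controlled by Assumptions~\ref{hyp:fq}-\ref{hyp:fq:fbound} and~\ref{hyp:K}-\ref{hyp:K:bound}, giving tightness.

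Next I would write the decomposition
\[
g_{n+1}(x) - g_n(x) \;=\; \gamma_{n+1}\bigl[h_n(x) - g_n(x)\bigr] \;+\; \gamma_{n+1}\,\Delta M_{n+1}(x),
\]
with $h_n := (f^\eta q_n^{1-\eta})\ast K_{b_{n+1}}$ and $\Delta M_{n+1}$ a martingale increment. Bandwidth conditions then give $h_n \to f^\eta q_n^{1-\eta}$ uniformly on compacts modulo a bias in $b_n$. For the martingale, the sharp pointwise variance bound is $\mathrm{Var}(\Delta M_{n+1}(x)\mid\mathcal F_n) \lesssim b_n^{-d}\int f^{2\eta} q_n^{1-2\eta}K_{b_n}(x-\cdot)$; the two regimes $\eta\ge 1/2$ and $\eta<1/2$ of Assumption~\ref{hyp:rate}-\ref{hyp:rate:coef} correspond exactly to whether $q_n^{1-2\eta}$ must be handled via the lower bound $q_n\ge\lambda_n q_0$ (requiring a factor $\lambda_n^{2\eta-1}$) or via $q_n\in L^\infty$ on compacts. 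A Freedman/Bennett inequality together with a chaining/covering argument over the compact $A$ (justified by Lipschitzness of $K$, i.e.\ Assumption~\ref{hyp:K}-\ref{hyp:K:reg}) then yields $\sup_{x\in A}\bigl|\sum_{k\le n}\gamma_k\Delta M_k(x)\bigr|\to 0$ almost surely; this is where the $\log n$ factor in Assumption~\ref{hyp:rate}-\ref{hyp:rate:coef} is consumed.

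Finally, with the noise and kernel bias dispatched, I would study the limiting deterministic dynamics using a Lyapunov function of Kullback--Leibler type, e.g.\ $V(q)=\mathrm{KL}(f\|q)$, which decreases along $\dot q_t\propto f^\eta q_t^{1-\eta}-q_t\int g_t$ and vanishes only at $q_t=f$. The main obstacle, as the authors themselves emphasize, is the spurious equilibrium $g\equiv 0$ when $\eta<1$: a direct KL argument is inconclusive there since $q^{1-\eta}\to 0$ can counteract the restoring drift. To exclude trapping near $0$, I would use the mixture step \eqref{eq:q} to keep $q_n\ge \lambda_n q_0$, so that $h_n\ge f^\eta(\lambda_n q_0)^{1-\eta}$ remains bounded below on any compact of the support of $f$; coupling this with a lower-bound recursion on $\int g_n$ shows $\liminf \int g_n>0$ almost surely. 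From there, standard stochastic-approximation asymptotics give $q_n\to f$ almost surely on compacts, and equicontinuity inherited from $K\ast\cdot$ upgrades pointwise to uniform convergence, as in~\cite{delyon2021safe}. The two serious difficulties I expect are the uniform martingale control under the time-varying weight bound $(c\lambda_n)^{-1}$, and the escape from $g\equiv 0$, for which the calibration of $\lambda_n$ in Assumption~\ref{hyp:rate}-\ref{hyp:rate:coef} is decisive.
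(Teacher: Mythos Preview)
Your first two blocks align closely with the paper: the reduction to $f_u=f$, the decomposition $g_{n+1}=(1-\gamma_{n+1})g_n+\gamma_{n+1}(Tq_n\ast K_{b_{n+1}})+\gamma_{n+1}\xi_{n+1}$, the two variance regimes $\eta\gtrless 1/2$, and the uniform-in-$x$ martingale control via a Freedman-type inequality plus a covering argument exploiting the Lipschitz constant of $K$. One quantitative point worth sharpening: the paper proves not merely $\sup_A|M_n|\to 0$ but the rate $\sup_{\|x\|\le n^p}|M_n(x)|=o(\lambda_n^{1-\eta})$ (Proposition~\ref{prop:mtgl}), and it is this precise scale that is compared against the $\lambda_n^{1-\eta}\,\udl T q_0$ term coming from your observation $h_n\ge (\lambda_n q_0)^{1-\eta}f^\eta$. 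That comparison, carried out \emph{pointwise} on $\{f\ge\alpha\}$, is how the paper rules out trapping at $g\equiv 0$; your proposed route via ``a lower-bound recursion on $\int g_n$ shows $\liminf\int g_n>0$'' would not by itself deliver the pointwise positivity of $g_n$ that the subsequent argument requires.

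The final block is where your route genuinely diverges, and where there is a gap. The paper does \emph{not} invoke an ODE or a Lyapunov function. It instead runs two elementary sandwich iterations on the local extrema $\udl g_{\,n,\varepsilon}(x)=\inf_{B(x,\varepsilon)}g_n$ and $\ovl g_{n,\varepsilon}(x)=\sup_{B(x,\varepsilon)}g_n$, obtaining inequalities of the form
\[
\udl g_{\,n+1,\varepsilon}(x)\ge(1-\gamma_{n+1})\,\udl g_{\,n,\varepsilon}(x)+\gamma_{n+1}\,c_{\varepsilon',n}\,\udl f_{\,\varepsilon'}(x)^{\eta}\,\udl g_{\,n,\varepsilon'}(x)^{1-\eta}+\gamma_{n+1}\xi_{n+1},
\]
passing to $\varliminf$ via Lemma~\ref{lem:rm-determ}, and then letting $\varepsilon'\downarrow\varepsilon$ to close the scalar fixed-point relation $u_\varepsilon\ge \udl f_{\,\varepsilon}^{\,\eta}\,u_\varepsilon^{1-\eta}$, hence $u_\varepsilon\ge\udl f_{\,\varepsilon}$ (and symmetrically for the upper bound). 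Your proposed $V(q)=\mathrm{KL}(f\|q)$ is not verified to decrease along $\dot g_t=Tq_t-g_t$ once the normalization and mixture $q_t=(1-\lambda_t)g_t/\!\int g_t+\lambda_t q_0$ are taken into account (a short computation shows the sign of $\dot V$ is not obvious), and the appeal to ``standard stochastic-approximation asymptotics'' is problematic in this infinite-dimensional, non-compact setting where no off-the-shelf theorem applies. The paper's sandwich argument is more elementary and sidesteps these issues entirely; you should replace your third block with it.
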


\subsection{Sketch of the proof}\label{sketch}

By Proposition~\eqref{prop:norm}, we can consider $f=f_u$.
We can rewrite \eqref{eq:g} as follows
\begin{align*}
 g_{n+1} &=  (1-\gamma_{n+1}) g_{n} + \gamma_{n+1} q_n^{1-\eta} f^\eta \ast K_{b_{n+1}} 
 + \gamma_{n+1}\xi_{n+1} \,,
\end{align*}
where $\xi_{n+1} : \mathbb R^d \to \mathbb R$ is a martingale increment in that, for all $x\in \mathbb R^d$, $\mathbb E [\xi_{n+1} (x) |\mathcal F_n] = 0 $. From the latter equation, we see that the algorithm has two equilibria: $f$ and $0$. Note that the case $\eta = 1$ does not present this issue, which is why our proof differs significantly from those in \cite{delyon2021safe} and \cite{korba2022adaptive}. The point $f$ is stable, while the point $0$ is unstable. This means that, without the martingale term involving $(\xi_n)_{n\ge 1}$, $(g_n)_{n\ge 0}$ would converge to $f$, but the presence of the martingale term can potentially cause it to get trapped at $0$. 

Using that $q_n = g_n/(\int g_n) + \lambda_n q_0$, a convexity inequality implies that
\begin{align*}
 g_{n+1} \ge  (1-\gamma_{n+1}) g_{n} + \gamma_{n+1} \frac {(1-\lambda_n)^{1-\eta}}{2^\eta(\int g_n)^{1-\eta}} T (g_n) \ast K_{b_{n+1}}& \\
 + \gamma_{n+1}\xi_{n+1} + \gamma_{n+1} \frac{\lambda_n }{2^\eta}  T(q_0) & \,,
\end{align*}
where $T(q) = q^{1-\eta} f^\eta$. By iterating the latter equation, we obtain a lower bound on $g_n$. Using a Freedman-type inequality, we show that, for sufficiently large $n$, the term involving the heavy tailed density $q_0$ is greater than the absolute value of the martingale term involving $(\xi_n)_{n \ge 1}$ in the expression of this lower bound.
Hence, cases where the martingale term could lead the algorithm into the trap cannot occur. The trap is therefore avoided. With this in hand, the analysis of convergence becomes more straightforward.



Note that in \cite{korba2022adaptive, portier2018asymptotic}, a Freedman-type concentration inequality is also used for the analysis of the martingale term.  When $\eta \in [1/2, 1)$, we recover the same condition as in 
\cite{korba2022adaptive, portier2018asymptotic} while when $\eta \in (0,1/2) $, the condition in Assumption~\ref{hyp:rate}-\ref{hyp:rate:coef} is stronger. This is due to the variance of the noise that does not scale the same way. 

\subsection{Convergence in total variation and weak convergence.}

An application of Scheffé's lemma allows to extend the uniform convergence on compact sets to $L_1$-convergence.

  \begin{coro}\label{cor:L1}
Under the assumptions of Theorem~\ref{th:main}, we have, almost surely, 
\[
    \lim_{n\to\infty} \int \abs{q_n -f} = 0 .
\]
  \end{coro}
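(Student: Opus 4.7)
The plan is to invoke Scheff\'e's lemma, which states that if $(p_n)$ is a sequence of probability densities with respect to Lebesgue measure on $\reels^d$ and $p_n \to p$ Lebesgue-almost everywhere for some probability density $p$, then $\int \abs{p_n - p} \to 0$.

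To apply this, I would first check that $q_n$ and $f$ are genuine probability densities. By Proposition~\ref{prop:norm}, the policy is invariant under rescaling of $f_u$, so as in the sketch of Theorem~\ref{th:main} one may assume $f_u = f$ is the normalized target density. For the iterates $q_n$, the defining equation~(\ref{eq:q}) writes $q_{n+1}$ as a convex combination of the probability density $q_0$ and the normalized function $g_{n+1}/\int g_{n+1}$; provided $\int g_{n+1} > 0$ (which holds almost surely for $n$ large, as part of the proof of Theorem~\ref{th:main} already ensures), $q_{n+1}$ integrates to $1$ and is thus a probability density.

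Next, I would upgrade the conclusion of Theorem~\ref{th:main} from uniform-on-compacts to pointwise convergence. This is immediate: for every $x\in \reels^d$, choosing any compact $A$ containing $x$ gives $q_n(x) \to f(x)$. Hence on the almost-sure event provided by Theorem~\ref{th:main}, we have $q_n \to f$ everywhere on $\reels^d$, which certainly implies Lebesgue-a.e.\ convergence. Scheff\'e's lemma then yields $\int \abs{q_n - f} \to 0$ on this event, which is the desired almost-sure statement.

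There is essentially no obstacle beyond bookkeeping: the content of the corollary is fully carried by Theorem~\ref{th:main}, and Scheff\'e's lemma does the rest without requiring any additional tightness or uniform integrability argument, since the equality of total masses ($\int q_n = \int f = 1$) is built into the construction of $q_n$ and handles the mass that might otherwise escape to infinity under mere pointwise convergence.
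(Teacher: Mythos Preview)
Your proposal is correct and matches the paper's approach exactly: the paper simply states that Corollary~\ref{cor:L1} follows from Theorem~\ref{th:main} by ``an application of Scheff\'e's lemma'' without further detail. The only minor bookkeeping point is that Theorem~\ref{th:main} is stated for a fixed compact $A$ (so the null set may a priori depend on $A$); to get a single almost-sure event on which $q_n\to f$ pointwise everywhere, take a countable exhaustion $A_k=\overline{B(0,k)}$ and intersect the corresponding full-measure events --- after which Scheff\'e applies exactly as you describe.
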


 Now we can turn our attention to weak convergence type of results for the estimation of integrals. This property has some practical interest in regards of the Bayesian application where one is often interested in computing the mean with respect to posterior distribution. With the help of Algorithm 1, integral of the form $\mu(h) = \int h f$, for a given integrable function $h$, can be easily estimated using the normalized quantity 
 $$ 
\hat \mu_n (h) =  \frac  {\sum_{i=1}^n w_i h(X_i) }{\sum_{i=1}^n w_i}.
 $$
 The asymptotic normality is established in the next proposition.

\begin{coro}\label{coro:TCL}
    Let $h: \bR^d \mapsto \bR $ with compact support $A$ such that $\int fh^2 < \infty$ and suppose that $\inf_{x\in A} f(x) > 0$. Under the assumptions of Theorem~\ref{th:main}, 
    $
\sqrt n (\hat \mu_n (h)  -  \mu (h)) \leadsto \mathcal N ( 0, \sigma^2(h)) 
    $ 
with $ \sigma^2 (h) = \mu(h^2) - \mu(h) ^2$.
\end{coro}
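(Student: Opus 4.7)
The plan is to apply a martingale central limit theorem to the numerator of the self-normalized estimator and conclude by Slutsky's lemma.

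By Proposition~\ref{prop:norm} we may take $f_u=f$, so $\mathbb{E}[w_i\mid\mathcal F_{i-1}]=1$ and $\mathbb{E}[w_ih(X_i)\mid\mathcal F_{i-1}]=\mu(h)$. Rewrite
$$
\sqrt n\,(\hat\mu_n(h)-\mu(h)) \;=\; \frac{n^{-1/2}\sum_{i=1}^n D_i}{n^{-1}\sum_{i=1}^n w_i},\qquad D_i := w_i\bigl(h(X_i)-\mu(h)\bigr),
$$
so that both $(w_i-1)$ and $(D_i)$ are $(\mathcal F_i)$-martingale differences. Assumption~\ref{hyp:fq}-\ref{hyp:fq:f<q0} combined with~\eqref{eq:q} implies $q_{i-1}\geq c\lambda_i f$, so the weights satisfy the deterministic bound $w_i\leq(c\lambda_i)^{-1}$. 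A standard $L^2$-martingale law of large numbers then yields $n^{-1}\sum_{i=1}^n w_i\to 1$ almost surely, which handles the denominator via Slutsky.

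For the numerator I invoke the martingale CLT (Hall--Heyde form). Two conditions must be checked: (a)~$n^{-1}\sum_i\mathbb{E}[D_i^2\mid\mathcal F_{i-1}]\to\sigma^2(h)$ in probability; (b)~a conditional Lindeberg condition. For~(a), expand
$$
\mathbb{E}[D_i^2\mid\mathcal F_{i-1}] \;=\; \int\frac{f^2(h-\mu(h))^2}{q_{i-1}} \;=\; \int_A\frac{f^2(h-\mu(h))^2}{q_{i-1}} \;+\; \mu(h)^2\int_{A^c}\frac{f^2}{q_{i-1}}.
$$
The integral over $A$ converges almost surely to $\int_A f(h-\mu(h))^2$ because Theorem~\ref{th:main} combined with $\inf_A f>0$ yields $f/q_{i-1}\to 1$ uniformly on the bounded set $A$. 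Together with the target limit $\mu(h)^2\int_{A^c}f$ of the tail piece this gives $\mathbb{E}[D_i^2\mid\mathcal F_{i-1}]\to\int f(h-\mu(h))^2=\sigma^2(h)$ almost surely, whence Cesaro's lemma yields~(a). The Lindeberg condition~(b) follows from the uniform bound $|D_i|\leq(\|h\|_\infty+|\mu(h)|)/(c\lambda_i)$: provided $\lambda_i\sqrt n\to\infty$, which is implicit in the rate conditions of Assumption~\ref{hyp:rate}-\ref{hyp:rate:coef}, the event $\{|D_i|>\varepsilon\sqrt n\}$ is eventually empty.

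The main obstacle is establishing the tail convergence $\int_{A^c}f^2/q_{i-1}\to 1-\int_A f$, because Theorem~\ref{th:main} only provides uniform control on fixed compact sets while $q_{i-1}(x)$ may be arbitrarily small relative to $f(x)$ in the tails. I would address this by a diagonal truncation: fix a ball $B_R\supset A$, split $A^c=(B_R\setminus A)\cup B_R^c$, use uniform convergence on $B_R$ for the inner shell, and on $B_R^c$ apply the deterministic bound $f^2/q_{i-1}\leq f/(c\lambda_i)$ together with the tail estimate $\int_{B_R^c}f\leq C_fR^{-r}$ from Assumption~\ref{hyp:fq}-\ref{hyp:fq:fbound}; letting $R=R_i\to\infty$ slowly relative to $\lambda_i$ makes the residue vanish. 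Once (a) and (b) are established, the martingale CLT gives $n^{-1/2}\sum_iD_i\leadsto\mathcal N(0,\sigma^2(h))$, and Slutsky's lemma yields the claim.
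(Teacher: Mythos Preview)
Your martingale increment $D_i=w_i\bigl(h(X_i)-\mu(h)\bigr)$ differs from the paper's choice $w_ih(X_i)-\mu(h)$, and this seemingly minor difference is the source of your two main difficulties. Since $h$ vanishes off the compact set $A$ where $\inf_Af>0$, the paper's random term $w_ih(X_i)$ lives entirely on $A$; Theorem~\ref{th:main} then furnishes a finite random constant $M:=\sup_{i\ge 1,\,x\in A}f(x)/q_{i-1}(x)$, so the conditional second moment $\int_A f^2h^2/q_{i-1}-\mu(h)^2$ converges by dominated convergence on $A$ alone, and the Lindeberg truncation set is contained in $\{x\in A:M|h(x)|\ge\tfrac\varepsilon 2\sqrt n\}$, whose Lebesgue measure vanishes with $n$. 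No control of $f/q_{i-1}$ outside $A$ is ever needed.

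Your increment carries the extra piece $-w_i\mu(h)$, which forces the tail term $\mu(h)^2\int_{A^c}f^2/q_{i-1}$. The truncation you sketch does not close on the inner shell $B_R\setminus A$: there is no positivity assumption on $f$ off $A$, so uniform convergence $q_n\to f$ on $B_R$ does \emph{not} imply $\int_{B_R\setminus A}f^2/q_n\to\int_{B_R\setminus A}f$; near a zero of $f$ the ratio $f/q_n$ admits no integrable majorant and dominated convergence is unavailable. Separately, your Lindeberg step invokes $\lambda_n\sqrt n\to\infty$, which is \emph{not} a consequence of Assumption~\ref{hyp:rate}-\ref{hyp:rate:coef}: for $\eta\ge 1/2$, $\gamma_n=1/n$, $\lambda_n=n^{-0.9}$, $b_n^d=(\log n)^{-2}$ one has $n\gamma_n^2\log n/(\lambda_n b_n^d)=n^{-0.1}(\log n)^3\to 0$ while $\lambda_n\sqrt n=n^{-0.4}\to 0$. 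Both obstructions disappear if you switch to the paper's centering, since the resulting increments are compactly supported and eventually bounded by a random constant independent of $\lambda_n$. The same uncontrolled global ratio $f/q_{i-1}$ also undermines your ``standard $L^2$-martingale LLN'' for the denominator: the conditional variance of $w_i$ is $\int f^2/q_{i-1}-1$, and the crude bound $w_i\le(c\lambda_{i-1})^{-1}$ only yields the sufficient condition $\sum i^{-2}\lambda_i^{-1}<\infty$, which the stated assumptions do not guarantee.
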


The proof of this result, which is given in the Appendix, follows from an application of the Lindeberg central limit theorem \cite{hall2014martingale} with a careful use of the convergence of $(q_n)_{n\ge 1}$ in order to check each of the conditions leading to the right asymptotic variance. We note that the expression of the asymptotic variance is the same as the one of the oracle Monte Carlo estimate $(1/n) \sum_{i=1} ^n h(X_i)$ where $(X_i)_{i\ge 1} $ is an independent sequence of random variables with common distribution $f$. This equicontinuity property is reminiscent of Corollary 1 stated in \cite{portier2018asymptotic} where a high-level condition is given on $q_n$ to satisfy such a central limit theorem.

\section{Practical considerations}\label{s4}In this section, we provide a complete description of the considered algorithms including subsampling and minibatching variants.

\subsection{Initial algorithm} To present a concise description of the studied algorithm, let us start with some algebra expanding~\eqref{eq:g} and \eqref{eq:q} which together provide the incremental expression of the algorithm,
as a damped stochastic version of mirror descent. Assuming $g_0=0$ and taking $\lambda_0= 1$, the policy $(q_n)_{n\ge 0}$ writes:
\begin{equation}\label{eq:newq}
    q_n(x) = (1-\lambda_n)\p{\frac{\sum_{i=1}^n W_{i,n} K_{b_i}(x-X_i)}{\sum_{i=1}^n W_{i,n}}} + \lambda_n q_0(x)\,  ,
\end{equation}
where for $1\le i\le n $, 
\begin{equation}
    \label{eq:W}
    W_{i,n} :=  w_i^\eta \gamma_i\prod_{j=i}^n (1-\gamma_j) \,.
\end{equation} 
The practical implementation of \eqref{eq:newq} and \eqref{eq:W} is detailed in Algorithm \ref{alg1}, referred to as \textit{mirror descent for adaptive sampling} (MIDAS), for a budget $N\ge 1$ corresponding to the number of evaluation of $f$.

\begin{algorithm}[ht]\caption{MIrror Descent for Adaptive Sampling (MIDAS)}\label{alg1}
\KwIn{Budget $N\ge 1$, step sizes $(\gamma_n)_{1\le n\le N}$, bandwidths $(b_n)_{1\le n\le N}$, mixture weights $(\lambda_n)_{1\le n\le N}$, learning rate $\eta\in(0,1]$,  initial distribution $q_0$, kernel $K$}
\KwOut{Weighted particles $(X_n, W_{n,{N}})_{1 \le n\le N}$ }
\nl\For{$n\leftarrow 0$ \KwTo $N-1$}
{\nl generate $X_{n+1}$ from the mixture $q_{n}$ defined in \eqref{eq:newq}\;
\nl $W_{n+1,n+1}\leftarrow \p{\frac{f_u(X_{n+1}) }{q_{n}(X_{n+1}) }}^\eta \gamma_{n+1}$\;
\nl if $n \ge 1$, for all $1\le i\le n$, $W_{i,n+1} \leftarrow (1-\gamma_{n+1}) W_{i,{n}}$\;
}
\end{algorithm}

Thanks to \eqref{eq:newq}, it is easy to see that MIDAS is invariant with respect to the choice of $f_u$ among all possible scaled versions of the density $f$. As a result the algorithm does not require the knowledge of the normalizing constant but also the behavior of the algorithm is not sensible to the value of this constant. This makes easier the choice of the hyperparameter such as the learning rate $\eta$ or the step-size $\gamma_n$. 
An interesting special case is obtained when setting $\gamma_n = 1/n$. In this case, we obtain the simplification: $W_{i,n} = w_i^\eta/n$, for all $1\le i\le n$. If, moreover, $\eta=1$, the algorithm is closely related to the algorithm of \cite{delyon2021safe} even though we consider here a slightly different choice of the kernel's bandwidth sequence.

     Sampling from the mixture $q_n$ in \eqref{eq:newq} is achieved by drawing a random index in $\{1,\dots,n\}$ with a probability equal to the weights $W_{i,n}$ for $1\le i\le n$. The cost of generating the index is $O(\log n)$. Next, a random variable is generated according to the kernel density $K$, which yields the final particle $X_{n+1}$ (up to shifting and rescaling). Of course, the kernel $K$ is chosen to make the latter step computationally effective.

 As justified in the previous paragraph, we shall neglect the (logarithmic) cost of drawing a particle from the current distribution $q_n$.
Hence the main computational cost of the algorithm is carried out by the evaluation, at each iteration, of the importance weight $W_{n+1,n+1}$ and in particular to the computation of $f_u(X_{n+1} )$ and $q_{n}(X_{n+1} )$. We denote by $c_u$ the cost of evaluating $f_u$. The evaluation of $q_n(X_{n+1})$ requires $n$ evaluations of $K$. Denoting by $c_K$ the cost for evaluating $K$ at a given point, the $n$-th iteration of the algorithm requires an order of $c_u+ n c_K$. This leads to an overall computing cost of $ N c_u + N^2c_K +$. Even if in some practical situations (e.g., complex Bayesian model or when $f_u$ is the result of a heavy simulation program), $c_K$ is might be smaller than $c_u$, the complexity is dominated by the quadratic term $N^2 c_K$ when $N$ is large. In this case, it is interesting to consider a variant of our algorithm, in which the complexity is reduced. This is the purpose of the next section.



\subsection{Subsampling variant}

In order to decrease the quadratic in $N$ computing cost of MIDAS, we propose a subsampling version of MIDAS, which is inspired from \cite{Gordon_1993}. The aim is to restrict the iteration cost of the algorithm to approximately $O(\ell_n)$ operations, where $(\ell_n)_{n\ge 0}$ is a sequence of integers chosen by the user herself, and such that $\ell_n\ll n$. One may for instance consider $\ell_n\sim n^\delta$, with $0<\delta<1$.

At each iteration $n$, the main idea is to draw $\ell_n$ indices $(u({i,n}))_{1\le i\le \ell_n}$ according to the following weighted empirical distribution
$$
\proba_n := \frac{\sum_{i=1}^n W_{i,n} \delta_{i}}{\sum_{i=1}^n W_{i,n}}\,,
$$
where we recall the definition of $W_{i,n}$ in~\eqref{eq:W}.
In other words, at the $n$th iteration of the algorithm, $\ell_n$ particles with its bandwidth are drawn with replacement among the $n$ particles generated so far by the algorithm.  
Then, we define
\begin{equation}\label{eq:q*}
q_n^*(x) := (1-\lambda_n)\sum_{i=1}^{\ell_n}K_{b_{u\p{i,n}}}\p{x - X_{u\p{i,n}}} + \lambda_n q_0(\cdot)\,,
\end{equation}
The algorithm, which will be referred to as SubMIDAS for subsampling MIDAS, is described in Algorithm~\ref{alg2}.

\begin{algorithm}[ht]\caption{SubMIDAS}\label{alg2}
\KwIn{Budget $N\ge 1$, step sizes $(\gamma_n)_{1\le n\le N}$,  bandwidths $(b_n)_{1\le n\le N}$, mixture weights $(\lambda_n)_{1\le n\le N}$, learning rate $\eta$, bootstrap sample sizes $(\ell_n)_{1\le n\le N}$, initial distribution $q_0$, kernel $K$.}
\KwOut{Weighted particles $(X_n,W_{n,N})_{1 \le n\le N}$ }
\nl Generate $X_1$ from $q_{0}$ and set $W_{1,1} \leftarrow \p{\frac{f_u(X_1)}{q_{0}(X_1)}}^\eta \gamma_1$\;

\nl\For{$n\leftarrow 1$ \KwTo $N-1$}
{\nl generate independent indices $(u\p{i,n})_{1\le i\le l_{n}}$ from $\proba_{n} = \frac{\sum_{i=1}^{n} W_{i,n}\delta_{i}}{ \sum_{i=1}^{{n}} W_{i,n}}$ \; 
\nl generate $X_{n+1}$ from the mixture $q^*_{n}$ defined in \eqref{eq:q*}\;
\nl $W_{n+1,n+1}\leftarrow \p{\frac{f_u(X_{n+1})}{q^*_{n}(X_{n+1})}}^\eta \gamma_{n+1}$\;
\nl for all $1\le i\le n$, $W_{i,n+1} \leftarrow (1-\gamma_{n+1}) W_{i,{n}}$\;
}
\end{algorithm}

The computing cost of the $n$-th iteration is then of order  $\ell_n \log(n)$ instead of $\ell_n \log(n)$ in Algorithm \ref{alg1},
which, when neglecting the operation to update the past weights (line $6$ in Algorithm \ref{alg2} compared to line $5$), leads to an overall computing time $ c_K  \sum_{n = 1} ^N  \ell_n \log(n) + Nc_f$. The assumption that line $6$ is negligible compared to line $5$ is observed in practice when using the Gaussian kernel. More importantly, in the case when $\gamma_i = 1/i$, line $6 $ is not necessary anymore and therefore leading to the mentioned computing time.


\subsection{Mini-batching variant}\label{sec:minibatch}


In the context of stochastic algorithms, the utilization of mini-batches, each consisting of \(m\) particles, is a common methodological refinement.  At each iteration \(n+1\), instead of generating a single particle \(X_{n+1}\) according to the density \(q_n\), the algorithm generates \(m\) particles \(X_{n+1,1}, \ldots, X_{n+1,m}\), each sampled from the same density \(q_n\).

The update formula for the unnormalized density, originally delineated by \eqref{eq:g}, is accordingly modified to accommodate this batch processing strategy:
\begin{equation*}
    g_{n+1}(x) = (1-\gamma_{n+1})g_n(x) + \gamma_{n+1} \frac{1}{m} \sum_{k=1}^{m} w^\eta_{n+1,k} K_{b_{n+1}}(x - X_{n+1,k})\, , \qquad \forall x \in \mathbb{R}^d,
\end{equation*}
 where the weights \(w_{n+1,k}\) are computed as \(w_{n+1,k} = {f(X_{n+1,k})} / {q_n(X_{n+1,k})}\).

This mini-batch approach offers the advantage of preserving the total number of particles, while reducing the computational time as it can generate multiple particles at each step through parallelization. However, it is worth noting that the fewer number of updates potentially impacts the adaptivity of the sampling mechanism. Thus, the choice of \(m\) represents a trade-off between computational efficiency and adaptive capability.

It is noteworthy that the convergence properties of our algorithm remain intact even under mini-batch adaptations. Indeed, the conditional expectation \(\mathbb{E}[g_{n+1}|\mathcal{F}_n]\) remains invariant regardless of whether mini-batching is employed. Consequently, we can extend the proof to the mini-batched context.
Moreover, as \(m\) is a predetermined constant in our mini-batching setup, the martingale increment term \(\mathbb{E}[g_{n+1}|\mathcal{F}_n] - g_{n+1}\) retains its asymptotic characteristics. This further substantiates that the introduction of mini-batching does not perturb the  convergence behavior of the algorithm.

\section{Numerical experiments}\label{s5}

This section is dedicated to the practical evaluation of the MIDAS algorithm (in particular the subsampling version) based on several synthetic examples as well as a real data Bayesian estimation problem. Two recent competitors, from Markov chain Monte Carlo and Sequential Monte Carlo literature, shall be considered for the sake of comparison.

\subsection{Competitors}\label{sec:setup}

We use the subMIDAS algorithm as described in Algorithm ~\ref{alg2}, with $\ell_n=\sqrt n$ and mini-batches of size $m=300$ as described in Section \ref{sec:minibatch}. The bandwidths, mixture weights and step sizes are given by 
$$b_n = \frac{0.4}{\sqrt{d}} (\frac{m n}{10000} + 1)^{-1/(4+d)}\, , \quad \lambda_n = \frac{1}{ \log(m n+10)} \,,\quad \gamma_n =\frac{ 1 } { (n+10)}. $$
To allow reasonable initialization, we implement a burn-in phase as follows: at \( n = 1 \), we set an initial batch size of \( m_0 = 2000 \) and for the first ten steps (\( n \le 10 \)), we set \( \lambda_n = 0.5 \).

A key feature of our algorithm is that it generates random variables with a density known up to a normalization constant, and it does not require the gradient of the density. We hence compare with two other algorithms sharing the same specifications: Annealed Importance Sampling (AIS)~\cite{neal2001annealed} and Kernel Adaptive Metropolis-Hastings (KAMH)~\cite{sejdinovic2014kernel}.

For AIS, we use a batch size of 300 with 20 Metropolis updates. Therefore, for a given number \( K \) of intermediate distributions, we evaluate the unnormalized target density function \( K \times 300 \times 20 \) times.  
We use intermediate distributions in the form described by~\cite{neal2001annealed}, with a geometrically spaced schedule. We run AIS independently for various numbers of intermediate distributions \( K \), and in the final iteration, we plot the distance to the target distribution with respect to the number of evaluations of the target, \( K \times 300 \times 20 \).

For KAMH, in each run, the starting particle is generated from \( q_0 \). We use a Gaussian kernel with a covariance matrix \(\sigma^2 I_d\) where \(\sigma = 5\), and we set the scaling parameters to \(\nu = {2.38}/{\sqrt{d}}\) and \(\gamma = 0.2\). In this case, one step corresponds to one evaluation of the unnormalized target density function. 

 \begin{figure}[!ht]
     \centering
     \includegraphics[width=10cm]{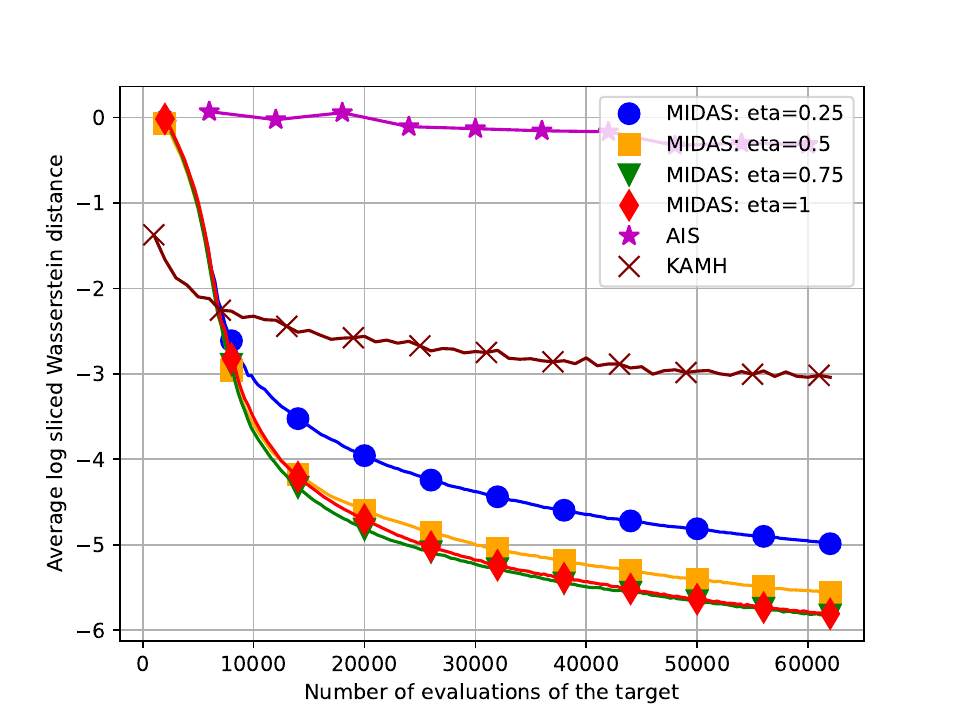}
     \caption{Cold start}
     \label{fig:coldstart}
 \end{figure}

\subsection{Evaluation}
For each competing method, we evaluate the distance to the target distribution with respect to the budget, i.e., the number of evaluation of $f_u$.
After a given number of evaluations of $f_u$, each method (subMIDAS, AIS, KAMH) gives an empirical distribution described with the help weighted particles. 
To evaluate the performance, we compute the sliced Wasserstein distance between the empirical distribution and the target distribution. Recall that for two probability measures \(\mu\) and \(\nu\) on \(\mathbb{R}^d\) with finite second moments,
\[
    SW_2(\mu,\nu) :=\bE_{\theta\sim \mathcal U(\mathbb{S}^{d-1})} [W_2(\theta_\#\mu, \theta_\#\nu)^2 ]\,.
\]
Here, \( W_2 \) denotes the Wasserstein distance on probability measures on \(\mathbb{R}\) with finite second moments, \(\mathbb{S}^{d-1}\) denotes the unit sphere \(\{\theta \in \mathbb{R}^d : \|\theta\| = 1\}\), $\mathcal U(\mathbb S^{d-1})$ denotes the uniform distribution on $\mathbb S^{d-1}$, and \(\theta_\#\mu\) denotes the pushforward of the measure \(\mu\) through the map \( x \in \mathbb{R}^d \mapsto \langle \theta, x \rangle \). 
With the help of Monte Carlo simulation from $\mathcal U(\mathbb S^{d-1})$, this distance is easily approximated since the Wasserstein distance \( W_2 \) on measures on \(\mathbb{R}\) admits a closed form.
In the presented graphs, we average the sliced Wasserstein distance over 50 runs.

\subsection{Toy examples}


\subsubsection*{Cold start}

The "cold start" scenario occurs when the target density is far from the initial density.
In this case, our target distribution is a Gaussian with a mean of \( {5}/{\sqrt{d}} 1_d \) and a variance of \( {{(0.4)^2}}/{d} I_d \). The initial distribution is a Gaussian with a mean of \( 0 \) and a covariance matix: \( {5}/{d} I_d \).

The results are given in Figure \ref{fig:coldstart}. In such a simple example of a target with a single mode, $\eta=1$ seems to be a reasonable choice. While taking $\eta=3/4$ makes the algorithm slightly faster at the start, it appears that taking a small value for $\eta$ might slow the convergence of MIDAS. AIS performs poorly, as the nature of the algorithm leads to high variance in the weights when the starting distribution is far from the target.

 \subsubsection*{Gaussian mixture}
In this case, the target distribution is a  mixture of two Gaussians with equal weights and  mean equal to \( {1}/({2\sqrt{d}}) 1_d \) and \( -{1}/({2\sqrt{d}}) 1_d \). The covariance matrix of the two Gaussianns is \( {0.4^2}/{d} I_d \). The initial distribution is a Student's distribution with mean \( 0 \) and scale parameter \( {5}/{d} I_d \).

In this  multi-modal target example, we clearly see the importance of \(\eta\). In this case, the algorithm performs better for the lowest value of \(\eta\). Furthermore, for all four values of \(\eta\), MIDAS outperforms our competitors.
  \begin{figure}[!ht]
     \centering
     \includegraphics[width=10cm]{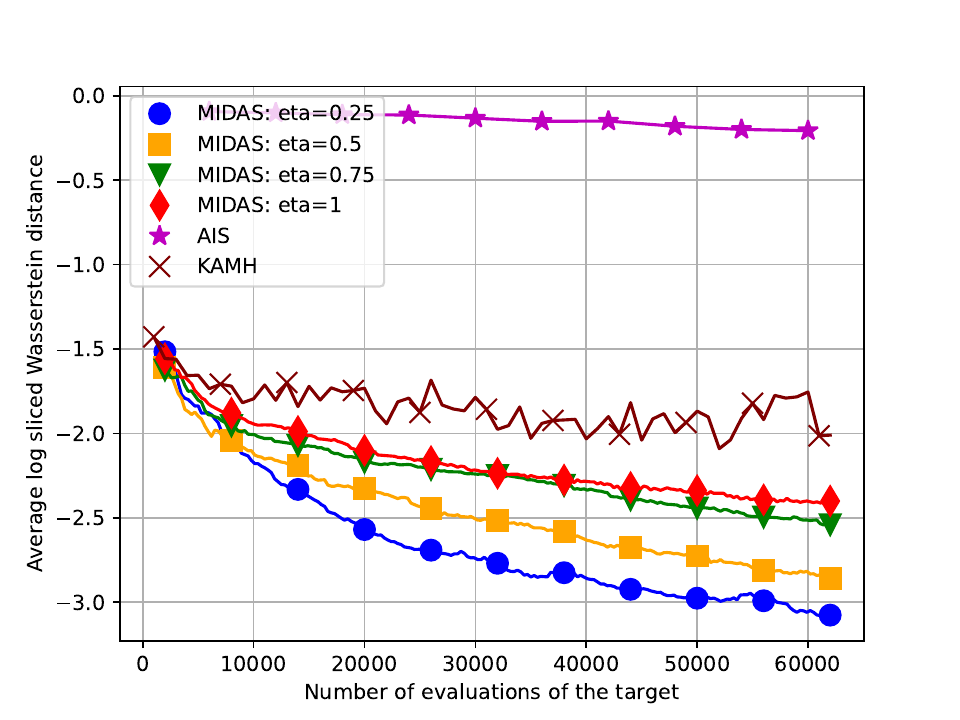}
     \caption{Gaussian Mixture}
     \label{fig:toyM}
 \end{figure}

 \subsection*{Anisotropic Gaussian mixture}
 In this case, the target distribution is a  mixture of two Gaussians with  mean equal to \( {1}/({2\sqrt{d}}) 1_d \) and \( -{1}/({2\sqrt{d}}) 1_d \). The covariance matrix of the two Gaussianns is \( 0.4^2/d \text{Diag}(10,1,\dots,1) \). The initial distribution is a Student's distribution with a location parameter of \( 0 \) and a scale parameter of \( {5}/{d} I_d \).

This case produces results that are similar to those of the Gaussian mixture case. 
  \begin{figure}[!ht]
     \centering
     \includegraphics[width=10cm]{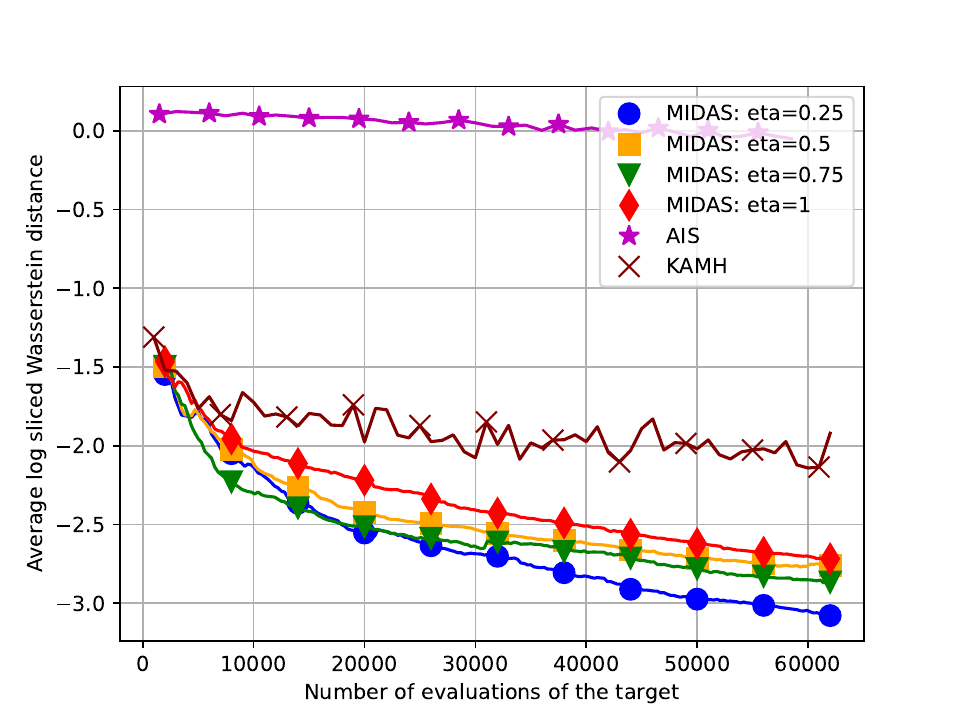}
     \caption{Anisotropic Gaussian Mixture}
     \label{fig:toyA}
 \end{figure}
 
\subsection{Bayesian logistic regression in real data set}

We consider the Bayesian logistic regression setting of \cite{gershman2012nonparametric}, also considered in the recent Bayesian inference literature \citep{liu2016stein,korba2022adaptive,li2022sampling}. More precisely, we consider a data set $\mathcal D = \{c_i,z_i\}_{i\in I}$ of points $z_i\in\reels^{d-1}$ and  and binary class labels $c_i\in\{-1,1\}$ for $i\in I$ where $d$ is the dimension.

The model is parameterized by a parameter $\theta = (w,\beta)\in \reels^{d-1} \times \reels$ and the hyper parameters $a,b \in \reels$.  For each $i\in I$, we have
\begin{equation}\label{eq:bayR}
        \proba\p{c_i = 1\vert \theta, z_i } = \frac 1{1+\ex^{-w^Tz_i}}\,.
\end{equation}
The parameter $\beta$ follows a Gamma distribution of shape parameter $a=1$ and rate parameter $b=0.01$. The parameter $w$ conditionally to $\beta$ follows a Gaussian distribution of mean $0$ and variance $ (1 / \beta)  I_{d-1}$.

We are interested in estimating the posterior density $p(\theta\vert \mathcal D_\text{train})$ of the parameter $\theta$ according to a training data set $\mathcal D_{\text{train}}$ which is given by:
\[
    p(\theta \vert \mathcal D_{\text{train}}) =\frac{ p(\mathcal D_{\text{train}}\vert \theta) p(\theta)}{\proba(\mathcal D_{\text{train}})}
\]
where the prior $p(\theta)$ is the density of the parameter $\theta$ given by the model, the likelihood $p(\mathcal D_\text{train}\vert \theta)$ is the probability of obtaining the the data set $\mathcal{D}_\text{train}$ with parameter $\theta$ and is given by~\eqref{eq:bayR} and the  marginal likelihood $\proba(\mathcal D_\text{train})$ is the probability of obtaining the data set $\mathcal D_\text{train}$ which does not need to be computed here. Thus, the unnormalized target distribution is given by $p(\mathcal D_{\text{train}}\vert \theta) p(\theta)$.

Given a new data point $z_{\text{new}}$, we are interested in predicting the label $c_{\text{new}}$. Using the posterior density and~\eqref{eq:bayR}, we have: $ \proba(c_\text{new}=1|z_\text{new},\mathcal D_{\text{train}}) = \int \proba(c_\text{new}=1\vert \theta, z_\text{new})p(\theta\vert \mathcal D_{\text{train}})\dr\theta$. And if the computed probability exceeds $1/2$, we infer that $c_{\text{new}} =1$. Otherwise, we assign  $c_{\text{new}} =-1$.


  We consider the dataset 'waveform' made of $5000$ entries with dimension $d=22$. For each competing method, the predictions are made based on the training set  $\mathcal D_{\text{train}}$ of size $400$ and the  average accuracy is computed with the help of the test dataset made of the remaining $4600$ points. This is displayed in Figure~\ref{fig:bayReg} where we observe that MIDAS with $\eta=1/4$ outperforms all competitors. Moreover, in this case, the classical algorithm with $\eta=1$ performs even worse than the competitors like AIS and KAMH.
 \begin{figure}
     \centering
     \includegraphics[width=10cm]{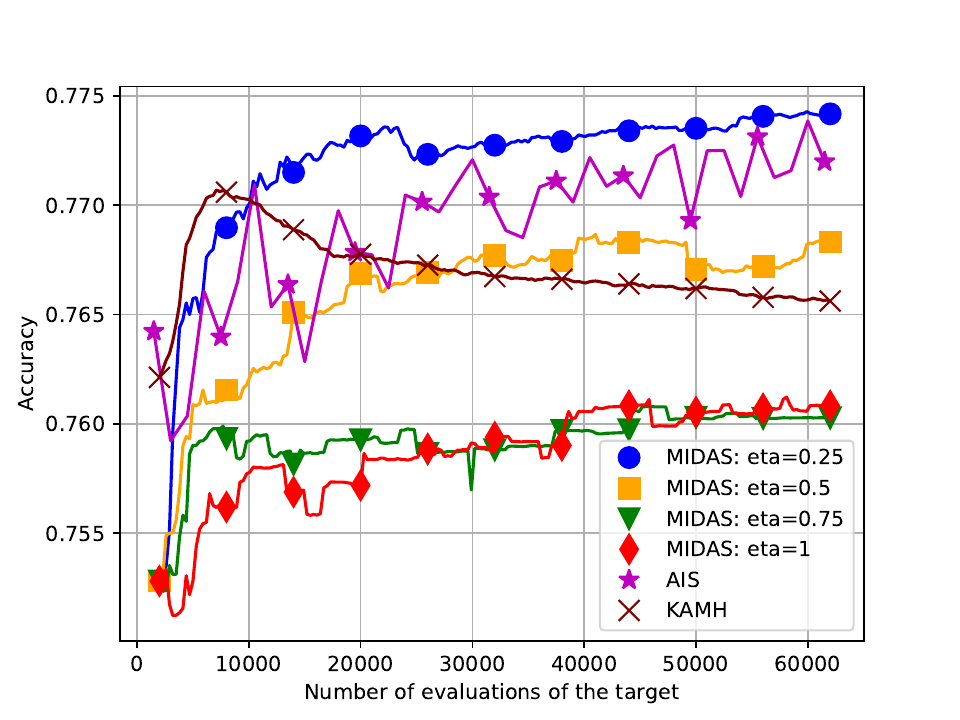}
     \caption{Bayesian Logistic Regression}
     \label{fig:bayReg}
 \end{figure}

\section{Possible extensions}

Several extensions of this work may be worth exploring further. We conjecture that the subsampling algorithm subMIDAS satisfies similar convergence properties as the one described in Theorem \ref{th:main} depending on the choice of the parameters $\lambda_n$, $\gamma_n$ and $b_n$ with respect to $\ell_n$. This is left for further research. In addition, we believe that the assumptions on the previous sequences, in case $\eta < 1/2 $, might be alleviated at the price of a more thorough examination.

The adaptive choice of the learning rate $\eta$ might help to improve the convergence rate and the practical behavior of the algorithm. As noted in~\cite{chopinconnection}, our algorithm can be viewed as an annealed importance sampling (AIS) algorithm. Such an algorithm chooses a distribution path $f_k$ of the form $f_k \propto q_0^{1-\beta_k}f^{\beta_k}$, where the annealing schedule satisfies $\beta_N=1$ for the final iteration $N$. The annealing schedule corresponding to our algorithm is $\beta_n = 1 - (1 - \eta)^n$ as studied in~\cite{chopinconnection}, for instance. There are many choices of schedules that work well for different types of problems. The existence of an optimal schedule that minimizes the variance of the estimates is discussed in~\cite{syed2024optimised}. Studying optimal choice of the parameter $\eta$ in our context could have some interesting connections with the previously mentioned papers. We leave this investigation for future work.

\bibliographystyle{alpha}
\bibliography{bibli}

\newpage

\begin{appendix}

\section{Proofs}\label{s:app}

\subsection{Notation}

Let $\cC(\bR^d)$ denote the set of continuous functions on $\bR^d$.
We denote by $B(x,\varepsilon)$ the $d$-dimensional open ball of radius $\varepsilon$ and center $x$.
We define $(x)^+ = \max(x,0)$. 
We use the convention that $\inf\emptyset = +\infty$, and that $0^0=1$.
For positive sequences $(a_n)_{n\ge 1}$, $(b_n)_{n\ge 1}$, the notation $a_n = O(b_n)$ means that there exists $C>0$ such that
$a_n\le C b_n$ for all $n\ge 1$. For a sequence $(a_n)_{n\ge 0}$, we denote $\udl \lim a_n := \lim_{n\to\infty}\inf_{k\ge n} a_k$ and $\ovl \lim a_n := \lim_{n\to\infty}\sup_{k\ge n} a_k$ .

\subsection{Proof of Theorem \ref{th:main}}\label{sec:sketch}

We remark that the sequence $(q_n)_{n\ge 0}$ constructed with $f_u$ is the same as the one using $f$. Indeed, for a target $f$ normalized, the algorithm gives two sequences $(q_n)_{n\ge 0}$ and $(g_n)_{n\ge 0}$. Now we define $(\tilde q_n)_{n\ge 0},(g_n)_{n\ge 0}$ the sequences associated to the algorithm with target $\tilde f =C  f$. We see by induction that for all $n\ge 0$, $\tilde g_n = C^\eta g_n$ and $\tilde q_n = q _n$.
Then, in the rest of the proof we will consider $f_u=f$ so that for each $n \ge 1$,
$$ w_n = \frac{f(X_n)} { q_{n-1} (X_n)}.$$
We define the operator $T$ on the set of non-negative continuous functions on $\bR^d$ by:
$$
Tg : x\mapsto f(x)^\eta g(x)^{1-\eta}\,,
$$
for all $g$.
The iterates $(g_n)$ given by~(\ref{eq:g}) can be rewritten as:
\begin{align}
g_{n+1} = (1-\gamma_{n+1})g_n + \gamma_{n+1} Tq_n\ast K_{b_{n+1}} + \gamma_{n+1} \xi_{n+1}\,,\label{eq:gn}
\end{align}
where $\xi_{n+1}$ is a r.v. on $\cC(\bR^d)$ given by:
\begin{align}
\xi_{n+1}(x):= w_{n+1}^\eta K_{b_{n+1}}(x-X_{n+1})-  Tq_n\ast K_{b_{n+1}} (x) \,.\label{eq:xi}
\end{align}
We remark that, for every $x\in\bR^d$, $\bE(\xi_{n+1}(x)|\cF_n)=0$.
Iterating~\eqref{eq:gn} with
\begin{equation}\label{eq:psi}
  \psi_{i+1}^n := \prod_{j=i+1}^n (1-\gamma_j)\, ,
\end{equation}
we obtain for every $n\ge 0$
\begin{equation}\label{eq:gsum}
    g_n  =\sum_{i=1}^n \psi_{i+1}^n\gamma_i Tq_{i-1}\ast K_{b_{i}}+ M_n  \,,
\end{equation}
where the martingale term is defined as
\begin{equation}\label{eq:M}
  M_n := \sum_{i=1}^n \psi_{i+1}^n\gamma_i\xi_i \, .
\end{equation}
We define $Z_n := \int g_n$.
  By integrating~(\ref{eq:gn}), we have:
  \[
    Z_{n+1} = (1-\gamma_{n+1})Z_n + \gamma_{n+1}\int Tq_n +\gamma_{n+1}\int \xi_{n+1}.
  \]
We are now ready to proceed with the main steps of the proof.

\textbf{Step 1: Control of the martingale term.}
We obtain the following bound on the martingale term $M_n$ defined previously.
\begin{prop}\label{prop:mtgl} 
  Let Assumptions~\ref{hyp:fq}, \ref{hyp:K} and~\ref{hyp:rate} hold true.
  There exists $\varepsilon>0$ such that for any $p>0$
  the process $M_n$ of equation~(\ref{eq:M}) satisfies: 
    \begin{align}\label{supM}
  & \lim_{n\to\infty} \frac{\sup_{\|x\|\le n^p}| M_n(x)|}{\lambda_n^{1-\eta}} =0\,,
  \end{align}
  almost surely.
  \end{prop}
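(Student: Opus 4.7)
The plan is to combine a Freedman-type martingale concentration inequality with a covering argument over the ball $\{\|x\|\le n^p\}$. The first step is to derive deterministic and conditional second-moment bounds on the increment
\[
\xi_{i+1}(x) = w_{i+1}^\eta K_{b_{i+1}}(x - X_{i+1}) - (Tq_i \ast K_{b_{i+1}})(x).
\]
The mixture step in~\eqref{eq:q} combined with Assumption~\ref{hyp:fq}-\ref{hyp:fq:f<q0} gives $q_i \ge \lambda_i q_0 \ge c\lambda_i f$ pointwise, so $w_{i+1}^\eta \le (c\lambda_i)^{-\eta}$ a.s.\ and $|\xi_{i+1}(x)| \le C \lambda_i^{-\eta} b_{i+1}^{-d}$. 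For the conditional second moment, one starts from
\[
\bE[\xi_{i+1}(x)^2\mid \cF_i] \le \|K\|_\infty b_{i+1}^{-d}\int f(y)^{2\eta}q_i(y)^{1-2\eta}K_{b_{i+1}}(x-y)\,dy,
\]
and the analysis splits into the two regimes of Assumption~\ref{hyp:rate}-\ref{hyp:rate:coef}. For $\eta\ge 1/2$ the exponent $1-2\eta$ is non-positive, so $q_i \ge c\lambda_i f$ produces a bound of order $\lambda_i^{1-2\eta}b_{i+1}^{-d}$. For $\eta<1/2$, I would rewrite the integrand as $(fK_{b_{i+1}})^{2\eta}(q_iK_{b_{i+1}})^{1-2\eta}$, apply H\"older's inequality with conjugate exponents $1/(2\eta)$ and $1/(1-2\eta)$, and then bound $q_i\ast K_{b_{i+1}}\le \|K\|_\infty b_{i+1}^{-d}$ to obtain a bound of order $b_{i+1}^{-2d(1-\eta)}$ (with no $\lambda$-dependence).

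The second step is pointwise Freedman at fixed $x$. Under Assumption~\ref{hyp:rate}-\ref{hyp:rate:gamma} the tail products $\psi_{i+1}^n$ are exponentially small away from $i=n$, so both the quadratic variation $\sum_i (\psi_{i+1}^n\gamma_i)^2 \bE[\xi_i(x)^2\mid\cF_{i-1}]$ and the maximum jump $\max_i \psi_{i+1}^n\gamma_i|\xi_i(x)|$ inherit their size from the terms near $i=n$. Choosing threshold $t=\varepsilon\lambda_n^{1-\eta}$ and simplifying the Freedman denominator, one obtains
\[
\bP\bigl(|M_n(x)| > \varepsilon \lambda_n^{1-\eta}\bigr) \le 2\exp\bigl(-c\varepsilon^2 \Phi_n\bigr),
\]
with $\Phi_n \gtrsim \lambda_n b_n^d /(n\gamma_n^2)$ when $\eta\ge 1/2$ and $\Phi_n \gtrsim \lambda_n^{2(1-\eta)}b_n^{2d(1-\eta)}/(n\gamma_n^2)$ when $\eta<1/2$. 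To upgrade this to the supremum over $\{\|x\|\le n^p\}$, I would use the Lipschitz property of $K$ from Assumption~\ref{hyp:K}-\ref{hyp:K:reg} together with the a.s.\ bound on $w_i^\eta$ to show that $x\mapsto M_n(x)$ is Lipschitz with a polynomial-in-$n$ constant, cover the ball by $N_n=\operatorname{poly}(n)$ centers at a scale $n^{-q}$ small enough to absorb the Lipschitz error into $o(\lambda_n^{1-\eta})$, and apply a union bound. This only introduces a factor $\log N_n = O(\log n)$ in the exponent, which is absorbed since Assumption~\ref{hyp:rate}-\ref{hyp:rate:coef} forces $\Phi_n/\log n \to \infty$ in each regime, and Borel-Cantelli then closes the argument.

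The main obstacle is getting the right conditional variance in the regime $\eta<1/2$: the naive estimate through $q_i\ge c\lambda_i f$ gives $\lambda_i^{1-2\eta}b_{i+1}^{-d}$, which has the wrong joint scaling in $(\lambda_i,b_{i+1})$ and would require a strictly stronger condition than the one stated in Assumption~\ref{hyp:rate}-\ref{hyp:rate:coef}. The H\"older bound above trades the $\lambda$-factor for a stronger $b$-factor and is precisely what explains the asymmetric assumption between the two regimes. A secondary, mostly bookkeeping point is to argue that $\sum_i(\psi_{i+1}^n\gamma_i)^2 V_i$ is genuinely dominated by its contribution near $i=n$ even though $V_i$ is increasing in $i$; this is where the step-size comparison $\gamma_n - \gamma_{n+1}\le \gamma_n\gamma_{n+1}$ of Assumption~\ref{hyp:rate}-\ref{hyp:rate:gamma} together with the decreasing assumption on $b_n$ and $\lambda_n$ enters.
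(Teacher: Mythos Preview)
Your proposal is correct and follows essentially the same route as the paper: bound $|\xi_i|$, its Lipschitz constant, and its conditional variance (splitting on $\eta\gtrless 1/2$ via H\"older to get exactly the exponents $b_n^{-d}\lambda_n^{1-2\eta}$ and $b_n^{-2d(1-\eta)}$), feed these into a Freedman-type bound uniformly over a polynomial-size net of $\{\|x\|\le n^p\}$, and conclude by Borel--Cantelli since Assumption~\ref{hyp:rate}-\ref{hyp:rate:coef} forces the exponent to beat $\log n$. The only cosmetic differences are that the paper packages the covering inside a uniform Freedman inequality (their Proposition~\ref{prop:freedmanx}) rather than doing it by hand, and it sets the threshold as $t_n=C_0\sqrt{v\log n}$ and checks $t_n=o(\lambda_n^{1-\eta})$ a posteriori instead of fixing $t=\varepsilon\lambda_n^{1-\eta}$ up front; the H\"older step for $\eta<1/2$ is also organized slightly differently (the paper bounds $\int K_{b}^{1/\eta}$ directly) but reaches the same $b^{-2d(1-\eta)}$.
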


\textbf{Step 2: An upper bound on $Z_n$.}  By applying Hölder's inequality, $\int Tq_n \le 1$.
  Consequently, if we define the sequence $(\ovl Z_n)_{n\ge 0}$ with $\ovl Z_0 =Z_0$ as
  \begin{equation}\label{eq:Z_high}
    \ovl Z_{n+1}  := (1-\gamma_{n+1})\ovl Z_n + \gamma_{n+1} +\gamma_{n+1}\int \xi_{n+1}\, ,
  \end{equation}
  we obtain $Z_n\le\ovl Z_n$ for every $n$ a.s. .

 We obtain the following result.
\begin{prop}\label{prop:minZ} Suppose $f=f_u$, let Assumptions~\ref{hyp:fq} and \ref{hyp:rate} hold true. $(\ovl Z_n)_{n\ge 0}$ defined in~\eqref{eq:Z_high} satisfies $ \lim_{n\to\infty}\ovl Z_n =1$, a.s..
\end{prop}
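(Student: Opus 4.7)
The plan is to rewrite~\eqref{eq:Z_high} as a one-dimensional Robbins--Monro recursion for $U_n := \ovl Z_n - 1$, and invoke the Robbins--Siegmund lemma on $U_n^2$ to conclude $U_n \to 0$ almost surely. The recursion reads
\[
U_{n+1} = (1-\gamma_{n+1}) U_n + \gamma_{n+1}\, \epsilon_{n+1}, \qquad \epsilon_{n+1} := \int \xi_{n+1},
\]
with $U_0 = -1$ (since $g_0 = 0$, hence $Z_0 = 0$).

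\textbf{Step A.} I first identify $\epsilon_{n+1}$ as a bounded, centered martingale increment. Integrating~\eqref{eq:xi} with Fubini and $\int K_{b_{n+1}} = 1$ yields $\epsilon_{n+1} = w_{n+1}^\eta - \int T q_n$. Centering follows from $\bE[w_{n+1}^\eta \mid \cF_n] = \int q_n (f/q_n)^\eta = \int f^\eta q_n^{1-\eta} = \int T q_n$. The mixture step~\eqref{eq:q} combined with Assumption~\ref{hyp:fq}-\ref{hyp:fq:f<q0} gives $q_n \ge \lambda_n q_0 \ge c \lambda_n f$, so the weight is bounded a.s.\ by $w_{n+1} \le (c\lambda_n)^{-1}$ and $|\epsilon_{n+1}| \le 1 + (c\lambda_n)^{-\eta}$.

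\textbf{Step B.} Next I control the conditional variance $v_{n+1} := \bE[\epsilon_{n+1}^2 \mid \cF_n] \le \int f^{2\eta} q_n^{1-2\eta}$. For $\eta \le 1/2$, Hölder's inequality gives $v_{n+1} \le 1$. For $\eta > 1/2$, the exponent $1-2\eta$ is non-positive so $q_n \ge c\lambda_n f$ implies $q_n^{1-2\eta} \le (c\lambda_n)^{1-2\eta} f^{1-2\eta}$ and hence $v_{n+1} \le (c\lambda_n)^{1-2\eta}$.

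\textbf{Step C.} Squaring the recursion, taking conditional expectation, and using $(1-\gamma_{n+1})^2 \le 1 - \gamma_{n+1}$ for $n$ large enough, I get
\[
\bE[U_{n+1}^2 \mid \cF_n] \le (1-\gamma_{n+1}) U_n^2 + \gamma_{n+1}^2 v_{n+1}.
\]
Applying Robbins--Siegmund with $\alpha_n = 0$, $\beta_n = \gamma_{n+1}^2 v_{n+1}$ and subtracted non-negative term $\gamma_{n+1} U_n^2$ (which requires $\sum_n \gamma_n^2 v_n < \infty$), I obtain that $U_n^2$ converges almost surely to some random $L \ge 0$ and $\sum_n \gamma_n U_n^2 < \infty$ a.s. Since $\sum \gamma_n = \infty$ by Assumption~\ref{hyp:rate}-\ref{hyp:rate:gamma}, the limit must satisfy $L = 0$, so $U_n \to 0$ and $\ovl Z_n \to 1$ a.s.

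\textbf{Main obstacle.} The only delicate point is checking $\sum_n \gamma_n^2 v_n < \infty$. For $\eta \le 1/2$ this is free from Assumption~\ref{hyp:rate}-\ref{hyp:rate:gamma}. For $\eta > 1/2$ it reduces to $\sum_n \gamma_n^2 \lambda_n^{1-2\eta} < \infty$, which must be extracted from Assumption~\ref{hyp:rate}-\ref{hyp:rate:coef}: writing $\gamma_n^2 \lambda_n^{1-2\eta} = o\bigl(\lambda_n^{2(1-\eta)} b_n^d / (n \log n)\bigr)$ and exploiting the decay of $\lambda_n$ and $b_n$ produces a summable majorant. This is where the precise form of the rate condition is used.
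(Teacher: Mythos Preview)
Your Robbins--Siegmund strategy is natural and Steps A and B are carried out correctly (they coincide with the variance estimates the paper uses). The gap is in Step C, precisely at the point you flag as the ``Main obstacle'': the summability $\sum_n \gamma_n^2 \lambda_n^{-(2\eta-1)_+} < \infty$ is \emph{not} a consequence of Assumption~\ref{hyp:rate}. Your own rewriting yields $\gamma_n^2 \lambda_n^{1-2\eta} = o\bigl(\lambda_n^{2(1-\eta)} b_n^d / (n\log n)\bigr)$, but since $\sum 1/(n\log n)$ diverges, the factor $\lambda_n^{2(1-\eta)} b_n^d \to 0$ must decay fast enough to restore summability, and nothing in the assumptions forces this. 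A concrete counterexample at $\eta = 1$: take $\gamma_n \asymp n^{-1/2}(\log n)^{-1}$, $\lambda_n \asymp (\log\log n)/\log n$, $b_n^d \asymp (\log\log n)^{-1/2}$. One checks that all of Assumption~\ref{hyp:rate} holds (in particular $n\gamma_n^2\log n/(\lambda_n b_n^d) \asymp (\log\log n)^{-1/2} \to 0$), yet $\gamma_n^2/\lambda_n \asymp 1/(n\log n\log\log n)$, whose series diverges by the Bertrand test. Hence Robbins--Siegmund cannot be invoked under the stated hypotheses.

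The paper sidesteps this by writing $V_n := \ovl Z_n - 1 = \sum_{i=1}^n \psi_{i+1}^n\gamma_i\int\xi_i$ and applying the Freedman-type concentration inequality (Proposition~\ref{prop:freedmanx}) directly, followed by Borel--Cantelli. That route requires only $v\log n \to 0$ with $v = Cn\gamma_n^2\lambda_n^{-(2\eta-1)_+}$, i.e.\ $n\gamma_n^2\lambda_n^{-(2\eta-1)_+}\log n \to 0$, which \emph{is} implied by Assumption~\ref{hyp:rate}-\ref{hyp:rate:coef} (bound $\lambda_n^{-(2\eta-1)_+}\le \lambda_n^{-1}$ and use $b_n^d\to 0$). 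This pointwise decay condition is strictly weaker than the summability your argument needs. Your approach would go through under the extra hypothesis $\sum_n \gamma_n^2\lambda_n^{-(2\eta-1)_+} < \infty$, but as written it does not cover the full range of parameters allowed by the theorem.
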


\textbf{Step 3: $g_n$ is away from $0$.}
By applying a convexity inequality ($1-\eta  < 1$) we obtain:
\[
Tq_n(x) \ge \p{\frac {f(x)} 2}^ \eta\p{(1-\lambda_{n})^{1-\eta}\p{\frac{g_n(x)}{Z_n}}^{1-\eta} +\lambda_n^{1-\eta}q_0(x)^{1-\eta} }\, ,
\]
for every $x$.
We define the operator $\udl T$ on the set on non-negative continuous function on $\bR^d$ as
\[
        \udl Tg \,:\, x\mapsto (\tfrac{f(x)}{2})^\eta g(x)^{1-\eta}\,,
\]
for every $g$.
Then, using that $\ovl Z_n \ge Z_n$, we obtain
\[
g_{n+1}\ge (1-\gamma_{n+1})g_n + \gamma_{n+1}\frac{(1-\lambda_n)^{1-\eta}}{ \ovl Z_n^{1-\eta}}\udl Tg_n\ast K_{b_{n+1}} +\gamma_{n+1}\lambda_n^{1-\eta}\udl Tq_0\ast K_{b_{n+1}} + \gamma_{n+1}\xi_{n+1}\, ,
\]
for every $n\in\bN$.
Iterating the latter equation, we obtain for every $n\ge 0$
\[
    g_n \ge v_n + M_n +  \sum_{i=1}^n\psi_{i+1}^n \gamma_i \lambda_i^{1-\eta}   \udl T q_0\ast K_{b_i} \,,
\]
where 
\[
    v_n := \sum_{i=1}^n\psi_{i+1}^n \gamma_i    \frac{(1-\lambda_i)^{1-\eta}}{\ovl Z_n^{1-\eta}}\udl T g_{i-1}\ast K_{b_i}\,.
\]

By the martingale control given by Proposition~\ref{prop:mtgl}, we obtain:
\begin{lemma}\label{lem:gsupv} Let Assumptions~\ref{hyp:fq},~\ref{hyp:K} and~\ref{hyp:rate} hold true. For any $\alpha>0$ and $p>0$, we have that almost surely, there exists $ N_{\alpha,p}\in (0,\infty)$ such that for every $n\ge N_{\alpha,p} $
\begin{equation}\label{eq:gnsupvn}
  \min_{f(x)\ge \alpha, \norm{x}\le n^p}\p{ g_n -v_n}(x) \ge 0\, .
\end{equation}
\end{lemma}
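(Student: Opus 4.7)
The starting point is the pointwise inequality established in the display just above the lemma: for every $n\ge 0$ and $x\in\bR^d$,
\[
(g_n - v_n)(x) \;\ge\; M_n(x) + R_n(x), \qquad R_n(x) := \sum_{i=1}^n \psi_{i+1}^n \gamma_i \lambda_i^{1-\eta} \,\udl T q_0 \ast K_{b_i}(x).
\]
Since $A_n := \{x : f(x)\ge \alpha,\ \norm{x}\le n^p\}$ is compact (intersection of a closed superlevel set of the continuous $f$ with a closed ball) and $g_n - v_n$ is continuous, the minimum in the lemma statement is attained. The plan is to show that $R_n$ is bounded below by a positive constant multiple of $\lambda_n^{1-\eta}$ uniformly on $A_n$ for $n$ large, which will then dominate the martingale term $M_n$ controlled by Proposition~\ref{prop:mtgl}.

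First, since $1-\eta > 0$ and $(\lambda_i)$ is decreasing, $\lambda_i^{1-\eta} \ge \lambda_n^{1-\eta}$ for $i\le n$, so
\[
R_n(x) \;\ge\; \lambda_n^{1-\eta} \sum_{i=1}^n \psi_{i+1}^n \gamma_i \,\udl T q_0 \ast K_{b_i}(x).
\]
Next I lower-bound $\udl T q_0 \ast K_{b_i}$ uniformly on $A_n$. On $A_n$, Assumption~\ref{hyp:fq}-\ref{hyp:fq:f<q0} gives $q_0 \ge c f \ge c\alpha$, hence $\udl T q_0 = (f/2)^\eta q_0^{1-\eta} \ge \mu := (\alpha/2)^\eta (c\alpha)^{1-\eta}$ pointwise on $A_n$. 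By Assumption~\ref{hyp:fq}-\ref{hyp:fq:reg} the function $\udl T q_0$ is continuous on $\bR^d$ and therefore uniformly continuous on the compact set $\{\norm{x}\le n^p+1\}$; thus there exists $\delta_n>0$ such that $\udl T q_0(y)\ge \mu/2$ whenever $x\in A_n$ and $\norm{y-x}\le \delta_n$. Fix $R>0$ with $\int_{\norm{u}\le R}K(u)\dr u\ge 1/2$ (possible since $K$ is a probability density). Then for every $i$ with $b_i R\le \delta_n$,
\[
\udl T q_0 \ast K_{b_i}(x) = \int \udl T q_0(x - b_i u)\,K(u)\dr u \;\ge\; \frac{\mu}{2}\int_{\norm{u}\le R}K(u)\dr u \;\ge\; \frac{\mu}{4}, \qquad x\in A_n.
\]
Setting $I_n := \min\{i\ge 1 : b_i R\le \delta_n\}$ (finite because $b_i\to 0$) and using the telescoping identity $\sum_{i=I_n}^n \gamma_i \psi_{i+1}^n = 1 - \prod_{j=I_n}^n(1-\gamma_j)$, we get
\[
\inf_{x\in A_n}\sum_{i=1}^n \psi_{i+1}^n\gamma_i\,\udl T q_0 \ast K_{b_i}(x) \;\ge\; \frac{\mu}{4}\Bigl(1 - \prod_{j=I_n}^n (1-\gamma_j)\Bigr).
\]
The divergence $\sum_n \gamma_n = \infty$ implied by Assumption~\ref{hyp:rate}-\ref{hyp:rate:gamma} together with the polynomial decay of $b_n$ implicit in Assumption~\ref{hyp:rate}-\ref{hyp:rate:coef} then allows one to verify that $I_n=o(n)$, so the product tends to $0$ and $\inf_{A_n} R_n(x) \ge c_0\,\lambda_n^{1-\eta}$ for some $c_0>0$ and every $n$ large enough. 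Since Proposition~\ref{prop:mtgl} gives $\sup_{\norm{x}\le n^p}|M_n(x)|/\lambda_n^{1-\eta}\to 0$ almost surely, we obtain almost surely
\[
\inf_{x\in A_n}(g_n - v_n)(x) \;\ge\; \lambda_n^{1-\eta}\bigl(c_0 - o(1)\bigr) \;\ge\; 0 \quad\text{for every } n\ge N_{\alpha,p},
\]
which proves the lemma.

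The main obstacle is the concentration step: showing that $\prod_{j=I_n}^n(1-\gamma_j)\to 0$ despite the fact that the cutoff $I_n$ depends on the modulus of continuity $\delta_n$ of $\udl T q_0$ on the expanding compact $\{\norm{x}\le n^p+1\}$ and may therefore grow with $n$. Controlling the interplay between the decay rate of $(b_n)$ (governed by Assumption~\ref{hyp:rate}-\ref{hyp:rate:coef}) and the summability of $(\gamma_n)$ is precisely what guarantees $I_n/n \to 0$. Once this is in place, the rest is an elementary comparison between the deterministic lower bound obtained above and the almost-sure martingale control of Proposition~\ref{prop:mtgl}.
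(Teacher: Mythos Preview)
Your argument has a genuine gap at precisely the point you yourself flag as ``the main obstacle,'' and the resolution you sketch does not go through under the stated assumptions.

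The cutoff $I_n=\min\{i:b_iR\le\delta_n\}$ depends on $\delta_n$, the modulus of continuity of $\udl T q_0$ on the growing compact $\{\|x\|\le n^p+1\}$. Nothing in Assumptions~\ref{hyp:fq}--\ref{hyp:rate} controls how fast $\delta_n$ shrinks: $f$ and $q_0$ are only assumed continuous, not uniformly continuous on $\bR^d$, and the superlevel set $\{f\ge\alpha\}$ need not be bounded, so on the expanding compacts $\delta_n$ may vanish arbitrarily fast with $n$. Your appeal to a ``polynomial decay of $b_n$ implicit in Assumption~\ref{hyp:rate}-\ref{hyp:rate:coef}'' is also misdirected: that condition is a \emph{lower} bound on $b_n^d$ relative to $n\gamma_n^2\log n/\lambda_n$ (the bandwidth must not vanish too fast), not an upper bound, and in any case no decay rate for $(b_n)$ can compensate an uncontrolled $\delta_n$. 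Hence $I_n$ may grow arbitrarily fast and the claim $\prod_{j=I_n}^n(1-\gamma_j)\to 0$ (equivalently $\sum_{j=I_n}^n\gamma_j\to\infty$) is unsubstantiated. Your last paragraph is not a proof but a restatement of the difficulty.

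The paper avoids this complication by never localizing via uniform continuity on growing sets. It uses the \emph{global} domination $q_0\ge cf$ from Assumption~\ref{hyp:fq}-\ref{hyp:fq:f<q0} to obtain the pointwise inequality
\[
\udl T q_0 \;=\; (f/2)^\eta q_0^{1-\eta} \;\ge\; 2^{-\eta}c^{1-\eta}f
\]
valid on all of $\bR^d$, not just on $A_n$. After convolving, the lower bound on $\udl T q_0\ast K_{b_i}(x)$ then involves no modulus of continuity at all; the argument proceeds directly with $\sum_{i=1}^n\psi_{i+1}^n\gamma_i\to 1$ (Lemma~\ref{lem:rm-determ} with $a_i\equiv 1$) and Proposition~\ref{prop:mtgl}. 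Exploiting $q_0\ge cf$ globally is the key step you missed; once it is in place, the interplay between $I_n$, $(b_n)$ and $(\gamma_n)$ you describe simply never arises.
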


The previous result allows to show that $g_n$ cannot reach $0$ in places where $f$ is positive.

\begin{prop}\label{prop:minorationv}
        Let Assumptions~\ref{hyp:fq},~\ref{hyp:K} and~\ref{hyp:rate} hold true.  
Then for every $\varepsilon>0$, and every $x\in\bR^d$ satisfying $\inf_{y\in B(x,\varepsilon)}  f(y)>0$, we obtain  
\begin{gather*}
  \varliminf \inf _{y\in B(x,\varepsilon)}{v_{ n}(y) }> 0\,,\\
    \varliminf \inf _{y\in B(x,\varepsilon)}{g_{ n}(y) }> 0\,,
\end{gather*} 
almost surely.
\end{prop}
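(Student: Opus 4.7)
The plan is to first establish $\varliminf_n \inf_{y\in B(x,\varepsilon)} g_n(y) > 0$; the corresponding bound for $v_n$ will follow by feeding the eventual constant lower bound on $g_{i-1}$ back into the convolution defining $v_n$. Two ingredients are crucial: the $q_0$-safety net, which prevents $g_n$ from collapsing to $0$, and a bootstrap that amplifies an initial $\lambda_n^{1-\eta}$ lower bound into a genuine positive constant.

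For the base estimate, by continuity of $f$ and the hypothesis $\inf_{y\in B(x,\varepsilon)}f(y)>0$, one can find $\delta, \alpha>0$ with $f\ge \alpha$ on $B(x,\varepsilon+\delta)$, so Assumption~\ref{hyp:fq}-\ref{hyp:fq:f<q0} gives $q_0\ge c\alpha$ there and $\udl T q_0 = (f/2)^\eta q_0^{1-\eta}$ is bounded below by a positive constant on $B(x,\varepsilon+\delta)$. For $b_i<\delta$, the convolution $\udl T q_0\ast K_{b_i}$ inherits this lower bound on $B(x,\varepsilon)$, and since $\lambda_i$ is decreasing and $\sum_i \psi_{i+1}^n\gamma_i = 1-\psi_1^n \to 1$, the safety term $S_n(y):=\sum_i \psi_{i+1}^n\gamma_i\lambda_i^{1-\eta}\udl T q_0\ast K_{b_i}(y)$ satisfies $S_n(y)\ge c_0\lambda_n^{1-\eta}$ on $B(x,\varepsilon)$ for $n$ large enough. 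Combined with the decomposition $g_n \ge v_n + M_n + S_n$, the control $\sup_{\|y\|\le n^p}|M_n(y)|=o(\lambda_n^{1-\eta})$ from Proposition~\ref{prop:mtgl}, and $v_n\ge 0$, this yields $g_n(y) \ge (c_0/2)\lambda_n^{1-\eta}$ on $B(x,\varepsilon)$ for $n\ge N_0$, almost surely.

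To bootstrap, plug $g_{i-1}\ge (c_0/2)\lambda_{i-1}^{1-\eta}$ back into the convolution defining $v_n$, using a nested sequence $B(x,\varepsilon+\delta(1-2^{-k}))$ to absorb the displacement $\|z\|\le\delta 2^{-k-1}$ as $b_i\to 0$, and apply Lemma~\ref{lem:gsupv} ($g_n\ge v_n$) to obtain $g_n \ge c_1\lambda_n^{(1-\eta)^2}$ on $B(x,\varepsilon)$ for $n\ge N_1$. Iterating $k$ times gives $g_n \ge c_k\lambda_n^{(1-\eta)^{k+1}}$ for $n\ge N_k$, with constants satisfying $c_{k+1}=\tilde C c_k^{1-\eta}$, a contraction whose strictly positive fixed point is $c^*:=\tilde C^{1/\eta}$; hence $c_k\to c^*$.

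The last step turns this family of bounds into $\varliminf_n \phi_n > 0$, where $\phi_n:=\inf_{y\in B(x,\varepsilon)} g_n(y)$. The self-consistent inequality $\phi_n \ge \tilde C\sum_i\psi_{i+1}^n\gamma_i\phi_{i-1}^{1-\eta}(1+o(1))$, combined with $V_n:=\inf_{m\ge n}\phi_m\to L:=\varliminf_n\phi_n$ and $\sum_{i\ge N}\psi_{i+1}^n\gamma_i\to 1$ for each fixed $N$, gives on passing to the liminf $L \ge \tilde C L^{1-\eta}$, so either $L=0$ or $L\ge \tilde C^{1/\eta}$. The alternative $L=0$ is excluded by a diagonal argument: with $N_k=O(2^{k(4+d)})$ (from $b_{N_k}\le\delta 2^{-k-1}$ and the polynomial form of $b_n$), choose $k(n)\sim c\log n$ so that $N_{k(n)}\le n$ yet $(1-\eta)^{k(n)+1}|\log\lambda_n|\to 0$, exploiting the polynomial decay of $\lambda_n$ granted by Assumption~\ref{hyp:rate}-\ref{hyp:rate:coef}; then $\phi_n \ge c_{k(n)}\lambda_n^{(1-\eta)^{k(n)+1}}\to c^*>0$, contradicting $L=0$. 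The bound $\varliminf_n\inf_{y\in B(x,\varepsilon)} v_n(y)>0$ follows by plugging the eventual constant lower bound $g_{i-1}\ge c^*/2$ into the formula for $v_n$. The main obstacle is this diagonal step, which requires balancing the growth of the iteration thresholds $N_k$ against the decay of $\lambda_n$.
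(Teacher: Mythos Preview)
Your safety-net step and the contraction $c_{k+1}=\tilde C c_k^{1-\eta}$ are correct, but the diagonal argument that is supposed to exclude $L=0$ has a genuine gap. You claim $N_k=O(2^{k(4+d)})$ ``from $b_{N_k}\le\delta 2^{-k-1}$ and the polynomial form of $b_n$'', yet no polynomial form of $b_n$ is assumed: Assumption~\ref{hyp:rate} only requires $b_n\downarrow 0$ together with the lower bound implicit in part~\ref{hyp:rate:coef}. Nothing prevents $b_n$ from decaying as slowly as $1/\log\log n$, in which case the threshold $N_k$ determined by $b_{N_k}\le \delta 2^{-k-1}$ is doubly exponential in $k$, and no choice $k(n)\to\infty$ can simultaneously achieve $N_{k(n)}\le n$ and $(1-\eta)^{k(n)}|\log\lambda_n|\to 0$. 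The root cause is your decision to absorb the convolution displacement by shrinking the ball at every bootstrap step (your nested balls, incidentally, are written as increasing rather than decreasing), which ties $N_k$ to the bandwidth schedule rather than to the step sizes.

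The paper sidesteps all of this by working on the \emph{same} ball throughout. The key observation is that for $y\in B(x,\varepsilon)$ one has
\[
(\udl T g_n\ast K_{b_{n+1}})(y)\ \ge\ \tilde c_\varepsilon\,2^{-\eta}\,\ud{f}{\varepsilon}(x)^\eta\,\ud{g}{n,\varepsilon}(x)^{1-\eta},
\]
where $\tilde c_\varepsilon:=\inf_{n}\inf_{B(0,\varepsilon)}K_{b_{n+1}}\ast\1_{B(0,\varepsilon)}>0$ is positive uniformly over \emph{all} bandwidths (Lemma~\ref{lem:reste-noyau-bis}); no smallness of $b_i$ and no ball shrinking is required. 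Plugging this and $g_n\ge v_n$ (Lemma~\ref{lem:gsupv}) into the recursion~\eqref{eq:vnbecomes} yields a closed one-step inequality $\ud{v}{n+1,\varepsilon}\ge(1-\gamma_{n+1})\ud{v}{n,\varepsilon}+\gamma_{n+1}C\,\ud{v}{n,\varepsilon}^{\,1-\eta}$ on the fixed ball. Any $0<c\le C^{1/\eta}$ is preserved by the right-hand side, and your own step~1 already furnishes $\ud{v}{N,\varepsilon}>0$ at some finite $N$; a plain induction then gives $\ud{v}{n,\varepsilon}\ge c$ for all $n\ge N$. This is exactly the fixed point $c^*=\tilde C^{1/\eta}$ you identified, reached in a single induction rather than through an iterated bootstrap and a diagonalization that depends on unassumed structure.
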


\textbf{Step 4: A lower bound on $g_n$}
\begin{prop}\label{prop:minorationgparf}
       Let Assumptions~\ref{hyp:fq},~\ref{hyp:K} and~\ref{hyp:rate} hold true.  
Then for every compact set $A\subset \bR^d$, almost surely, we obtain  
\[
  \varliminf \inf _{x\in A }{g_{ n}(x)- f(x) }\ge  0\,.
\] 
\end{prop}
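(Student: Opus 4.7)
The plan is to view the iteration~\eqref{eq:gn} as a stochastic discretization of the ODE $\dot g = Tq - g$, which on the support of $f$ has a unique stable equilibrium $g = f$. Proposition~\ref{prop:minorationv} has ruled out the spurious trap at $0$, so what remains is to propagate the positive lower bound $g_n \ge c$ into the sharper $g_n \ge f - o(1)$ on compacts. Since $g_n \ge 0$ and $f$ is continuous, it suffices (the piece $A \cap \{f \le \varepsilon\}$ being trivial) to prove the bound on a compact subset of $\{f \ge \alpha\}$ for some $\alpha > 0$; a finite covering plus Proposition~\ref{prop:minorationv} then yields random $c, N_0 > 0$ with $g_n \ge c$ on $A$ for $n \ge N_0$ almost surely.

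Write $h_n := f - g_n$. Subtracting $f$ from \eqref{eq:gn} gives
\[
h_{n+1} = (1-\gamma_{n+1}) h_n + \gamma_{n+1}(f - Tq_n \ast K_{b_{n+1}}) - \gamma_{n+1}\xi_{n+1}.
\]
On $A$, combining $g_n \ge c$ with $\lambda_n \to 0$ and $\ovl Z_n \to 1$ (Proposition~\ref{prop:minZ}) gives $|q_n - g_n| \to 0$ uniformly on $A$; the Lipschitz continuity of $q \mapsto f^\eta q^{1-\eta}$ on $\{q \ge c/2\}$ together with $b_n \to 0$ then yields $|Tq_n \ast K_{b_{n+1}} - Tg_n| \le E_n$ on $A$ with $E_n \to 0$ a.s.\ uniformly. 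The concavity inequality $(1-t)^{1-\eta} \ge 1 - (1-\eta)t$ for $t \in [0,1]$ applied pointwise at $t = h_n^+/f$ gives $f - Tg_n \le (1-\eta) h_n^+$, hence
\[
(f - Tq_n \ast K_{b_{n+1}})^+ \le (1-\eta) h_n^+ + E_n \quad \text{on } A.
\]

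The main technical step is to handle the noise: individual increments $\xi_n$ do not vanish; only the cumulative martingale $M_n$ does, via Proposition~\ref{prop:mtgl}. The trick is to iterate before taking positive parts. Using~\eqref{eq:gsum} and the telescoping identity $\sum_{i=1}^n \psi_{i+1}^n \gamma_i + \psi_1^n = 1$,
\[
h_n = \psi_1^n f + \sum_{i=1}^n \psi_{i+1}^n \gamma_i (f - Tq_{i-1}\ast K_{b_i}) - M_n.
\]
Taking positive parts termwise and applying the drift bound from the previous paragraph,
\[
h_n^+(x) \le \psi_1^n f(x) + (1-\eta) \sum_{i=1}^n \psi_{i+1}^n \gamma_i h_{i-1}^+(x) + \sum_{i=1}^n \psi_{i+1}^n \gamma_i E_{i-1} + |M_n(x)|.
\]
Let $s_n := \sup_{x\in A} h_n^+(x)$. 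Since $\psi_1^n \to 0$, $\sup_A |M_n| \to 0$ by Proposition~\ref{prop:mtgl} (using $\lambda_n^{1-\eta} \to 0$), and $\sum_i \psi_{i+1}^n \gamma_i E_{i-1} \to 0$ by Toeplitz averaging (the weights $\psi_{i+1}^n \gamma_i$ are nonnegative, sum to $1 - \psi_1^n \to 1$, and tend to $0$ for each fixed $i$), we obtain
\[
s_n \le (1-\eta) \sum_{i=1}^n \psi_{i+1}^n \gamma_i s_{i-1} + \delta_n, \qquad \delta_n \to 0 \text{ a.s.}
\]

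To close, set $L := \varlimsup_n s_n$. For any $\varepsilon' > 0$ choose $N$ with $s_i \le L + \varepsilon'$ for $i \ge N$; since $\psi_N^n \to 0$ and the tail weights sum to at most $1$, $\varlimsup_n \sum_i \psi_{i+1}^n \gamma_i s_{i-1} \le L + \varepsilon'$. Therefore $L \le (1-\eta)(L + \varepsilon')$, i.e.\ $\eta L \le (1-\eta)\varepsilon'$; letting $\varepsilon' \to 0$ gives $L = 0$, which is the announced conclusion. The hardest single point is the choice to iterate first and take positive parts afterwards, so that Proposition~\ref{prop:mtgl} enters through $|M_n|$ rather than through the non-vanishing increments $\xi_n$.
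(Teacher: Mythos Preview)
Your contraction strategy—deriving $L\le(1-\eta)L$ for $L=\varlimsup\sup_A(f-g_n)^+$ from the concavity bound $f-Tg_n\le(1-\eta)h_n^+$—is clean and genuinely different from the paper's route, and the final Toeplitz/limsup closure is correct. The problem is the drift bound $(f-Tq_n\ast K_{b_{n+1}})^+\le(1-\eta)h_n^++E_n$ on $A$, which you have not established.

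There are two gaps. First, the claim ``$|q_n-g_n|\to 0$ uniformly on $A$'' is circular: since $q_n-g_n=g_n\big((1-\lambda_n)/Z_n-1\big)+\lambda_nq_0$, you need $Z_n\to 1$, whereas Proposition~\ref{prop:minZ} only gives $Z_n\le\ovl Z_n\to 1$; the convergence $Z_n\to 1$ in~\eqref{eq:Zn} is proved \emph{from} Proposition~\ref{prop:minorationgparf}. This part is repairable one-sidedly via $q_n\ge((1-\lambda_n)/\ovl Z_n)\,g_n$, hence $Tq_n\ge c_nTg_n$ with $c_n\to 1$.

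Second, and this is the real obstruction, the step ``$b_n\to 0$ yields $|Tq_n\ast K_{b_{n+1}}-Tg_n|\le E_n\to 0$'' requires equicontinuity of $x\mapsto Tq_n(x)$ (or $Tg_n$), not merely Lipschitzness of $q\mapsto f^\eta q^{1-\eta}$. The function $g_n$ is a weighted sum of kernels $K_{b_i}(\cdot-X_i)$ with $b_i\downarrow 0$, so its Lipschitz constant explodes (of order $\lambda_n^{-\eta}b_n^{-d-1}$; see the first estimate in Lemma~\ref{lem:bounds-Y}), and there is no reason for $Tg_n\ast K_{b_{n+1}}(x)$ to approximate $Tg_n(x)$ uniformly on $A$. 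The paper sidesteps this by never comparing the convolution to a pointwise value: for $y\in B(x,\varepsilon)$ and $\varepsilon'>\varepsilon$ it uses
\[
Tg_n\ast K_{b_{n+1}}(y)\ \ge\ \udl f_{\,\varepsilon'}(x)^{\eta}\,\udl g_{\,n,\varepsilon'}(x)^{1-\eta}\,\inf_{B(0,\varepsilon)}\!\big(K_{b_{n+1}}\ast\1_{B(0,\varepsilon')}\big),
\]
which needs no regularity of $g_n$ at all; iterating on $\udl g_{\,n,\varepsilon}$ yields $\varliminf\udl g_{\,n,\varepsilon}\ge\udl f_{\,\varepsilon'}^{\,\eta}\big(\varliminf\udl g_{\,n,\varepsilon'}\big)^{1-\eta}$, and the loop is closed by letting $\varepsilon,\varepsilon'$ slide together with the strict positivity from Proposition~\ref{prop:minorationv}. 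Your contraction idea could be grafted onto this local-infimum framework, but as written the uniform pointwise comparison between the convolved and unconvolved drifts does not go through.
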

 According to Proposition~\ref{prop:minorationgparf}, we obtain
  \begin{equation}\label{eq:prop3}
    \varliminf \inf_{x\in A}\p{g_n(x)-f(x)}\ge 0 \,,
  \end{equation}
  for every compact set $A\subset \reels^d$.
  Let us fix $\varepsilon>0$, by Assumption~\ref{hyp:fq}, there exists a compact set $A_\varepsilon$ such that:
  \[
      \int_{A_\varepsilon} f(x)\dr x \ge 1-\varepsilon\, .
  \]
Moreover,
  \[
    \int_{A_\varepsilon}\p{g_n -f}\ge \abs{A_\varepsilon}\inf_{x\in A_\varepsilon}\p{g_n(x)-f(x)} \,. 
  \]
  By~(\ref{eq:prop3}), we obtain
  \[
    \varliminf Z_n \ge \varliminf \int_{A_\varepsilon}g_n\ge\int_{A_\varepsilon} f \ge 1-\varepsilon\,,
  \]
  for every $\varepsilon>0$, which gives the result.

\begin{equation}\label{eq:Zn}
  Z_n\xrightarrow{a.s.} 1  \,,
\end{equation}
under the assumption of Proposition~\ref{prop:minorationgparf}.

\textbf{Step 5: An upper bound on $g_n$}
\begin{prop}\label{prop:majoration}
  
Suppose $f=f_u$, let Assumptions~\ref{hyp:fq},~\ref{hyp:K},~\ref{hyp:rate} hold true. 
Then, for every compact $A\subset \reels^d$, 
\[
  \varlimsup \sup _{x\in A}\p{ g_{n}(x) - f(x)}\le 0\,.
\] 
\end{prop}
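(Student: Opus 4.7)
The strategy is to derive an upper-bound recursion for $g_n - f$ dual to the lower bound of Step~3, then iterate it while controlling the residuals. The key algebraic ingredient is that $t \mapsto t^{1-\eta}$ is simultaneously subadditive and concave on $[0,\infty)$ (since $1-\eta \in (0,1)$). Applied to $q_n = (1-\lambda_n) g_n/Z_n + \lambda_n q_0$ and multiplied by $f^\eta$, subadditivity gives
\[
Tq_n \le Z_n^{-(1-\eta)} Tg_n + \lambda_n^{1-\eta} Tq_0,
\]
while the concave tangent-line bound yields $g_n^{1-\eta} \le f^{1-\eta} + (1-\eta) f^{-\eta}(g_n - f)$ on $\{f>0\}$, which after multiplication by $f^\eta$ becomes the pointwise linearization $Tg_n \le \eta f + (1-\eta) g_n$. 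Substituting into~\eqref{eq:gn}, convolving with $K_{b_{n+1}}$, and regrouping the $f$ terms, we obtain
\[
g_{n+1} - f \le (1-\gamma_{n+1})(g_n - f) + \gamma_{n+1} Z_n^{-(1-\eta)}(1-\eta)(g_n - f) \ast K_{b_{n+1}} + \gamma_{n+1} E_{n+1} + \gamma_{n+1}\xi_{n+1},
\]
where $E_{n+1} := [Z_n^{-(1-\eta)} f \ast K_{b_{n+1}} - f] + \lambda_n^{1-\eta}\, Tq_0 \ast K_{b_{n+1}}$ is deterministic. The structural point is that $(1-\gamma_{n+1}) + \gamma_{n+1}(1-\eta) = 1-\eta\gamma_{n+1}$, so the iteration is strictly contracting with rate $\eta>0$.

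The residual $E_{n+1}\to 0$ uniformly on compacts thanks to: (i) $Z_n \to 1$ almost surely (relation~\eqref{eq:Zn}, immediate from Proposition~\ref{prop:minorationgparf}); (ii) the approximate-identity property $f \ast K_{b_n} \to f$ uniformly on compacts, from continuity of $f$ and Assumption~\ref{hyp:K}; (iii) $\lambda_n \to 0$, with $Tq_0 \ast K_{b_{n+1}}$ uniformly bounded on compacts under Assumption~\ref{hyp:fq}. For the stochastic part, I would unfold the single-step residual $\xi_{n+1}$ into the cumulated martingale $M_n$ of~\eqref{eq:M} and invoke Proposition~\ref{prop:mtgl}, giving $\sup_{\|x\|\le n^p}|M_n(x)| = o(\lambda_n^{1-\eta}) = o(1)$ almost surely.

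To close the argument, fix strictly nested compacts $A \subset A' \subset A''$, set $V_n := \sup_{x \in A'}(g_n(x) - f(x))^+$, and split the convolution $(g_n-f) \ast K_{b_{n+1}}(x)$ as $\int_{\|y\|\le \delta} + \int_{\|y\|>\delta}$ with $\delta := \mathrm{dist}(A', \partial A'')/2$. For $x\in A'$ and $b_{n+1}$ small, the local integral is dominated by $\sup_{A''}(g_n-f)^+$; the tail is controlled by the polynomial decay of $K_{b_n}$ from Assumption~\ref{hyp:K}(ii) together with the $L^1$-bound $\int (g_n-f)^+ \le Z_n + 1 = O(1)$, and vanishes as $b_{n+1}\to 0$. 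The pointwise inequality then reduces to
\[
V_{n+1} \le \bigl(1 - \eta\gamma_{n+1} + o(\gamma_{n+1})\bigr) V_n + \gamma_{n+1}\rho_n,
\]
with $\rho_n \to 0$ almost surely; a standard Toeplitz-type argument (using $\sum_n \gamma_n = \infty$ from Assumption~\ref{hyp:rate}) gives $V_n \to 0$, hence $\sup_{x\in A}(g_n - f)^+ \to 0$.

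The main obstacle is reconciling the compact $A'$ on which $V_n$ is computed with the slightly larger $A''$ that arises from the convolution: the natural bootstrap passes from sup on $A'$ to sup on $A''$, and iterating this enlargement naively causes the compact to grow with $n$. One handles it either by fixing from the outset a finite chain $A = A_0 \subset A_1 \subset \dots \subset A_k$, with the $V_n$-recursion on $A_i$ feeding from the sup over $A_{i+1}$, or by combining the recursion with the $L^1$-bound on $(g_n - f)^+$ (which follows from $Z_n\to 1$ together with the Step~4 lower bound implying $(g_n-f)^-\to 0$ on compacts) to close the loop. A secondary subtlety is that $E_{n+1}$ and $\rho_n$ depend on the random sequence $Z_n$, so their uniform vanishing must be established pathwise, invoking Proposition~\ref{prop:minZ} and~\eqref{eq:Zn} almost surely rather than in expectation.
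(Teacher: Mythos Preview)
Your linearization $Tg_n \le \eta f + (1-\eta)g_n$ is a genuinely different route from the paper's. The paper never linearizes: working with the deterministic part $u_n:=g_n-M_n$, it bounds $K_{b_{n+1}}*Tq_n$ on a ball $B(x,\varepsilon)$ by $\ovl f_{\varepsilon'}^{\,\eta}\,\ovl q_{n,\varepsilon'}^{\,1-\eta}$ plus a vanishing tail (any $\varepsilon'>\varepsilon$), feeds this through the averaging Lemma~\ref{lem:rm-determ}, and obtains for $u_\varepsilon:=\varlimsup_n \ovl g_{n,\varepsilon}(x)$ the relation $u_\varepsilon\le \ovl f_{\varepsilon'}^{\,\eta}u_{\varepsilon'}^{1-\eta}$. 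Letting $\varepsilon,\varepsilon'\downarrow t$ yields the self-referential inequality $u_{t+}\le \ovl f_t^{\,\eta}u_{t+}^{1-\eta}$, hence $u_{t+}\le \ovl f_t$ once $u_{t+}<\infty$. The sublinear exponent $1-\eta$ is doing double duty: it closes this fixed-point inequality \emph{and} it supplies the a-priori bound $\sup_n\theta_n<\infty$ (where $\theta_n:=\sup_{\|x\|\le n^p}q_n(x)$) via a recursion of the form $S_{n+1}\le (1-\gamma_{n+1})S_n + C\gamma_{n+1}S_n^{1-\eta}+o(\gamma_{n+1})$, which stays bounded precisely because $t\mapsto t^{1-\eta}$ is sublinear.

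That second point is where your sketch has a real gap. The tangent-line bound is sharp near $g_n\approx f$ but replaces the sublinear $g_n^{1-\eta}$ by the linear $(1-\eta)g_n$, discarding all leverage when $g_n$ is large. Concretely, the displayed recursion $V_{n+1}\le (1-\eta\gamma_{n+1}+o(\gamma_{n+1}))V_n+\gamma_{n+1}\rho_n$ is not what you actually derive: the convolution forces $V_n(A'')$ on the right, not $V_n(A')$, as you yourself note. Neither proposed fix closes without an a-priori bound on $\sup_A g_n$. The finite chain only gives $\varlimsup V_n(A_0)\le (1-\eta)^k\varlimsup V_n(A_k)$, which is vacuous unless $\varlimsup V_n(A_k)$ stays bounded as $A_k$ grows (each application of Lemma~\ref{lem:rm-determ} enlarges the compact by a \emph{fixed} margin, so $A_k$ cannot stay bounded as $k\to\infty$); and the $L^1$ control $\|(g_n-f)^+\|_1\to 0$ cannot dominate the sup against the $b_n^{-d}$ blow-up of the kernel. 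If you \emph{first} establish boundedness of $\theta_n$ --- which needs the nonlinear $T$, not its linear majorant --- then your contraction argument can indeed be completed, for instance by the paper's $\varepsilon,\varepsilon'\downarrow t$ device applied to your linearized relation (giving $u_{t+}\le (1-\eta)u_{t+}$, hence $u_{t+}=0$). As written, the a-priori boundedness is the missing ingredient.
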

And, using the Propositions~\ref{prop:minorationgparf} and~\ref{prop:majoration}, we obtain Theorem~\ref{th:main}, leading to Corollary~\ref{coro:TCL}.



\subsection{Proof of Proposition ~\ref{prop:mtgl} }
In this subsection, we suppose $f=f_u$, and we let Assumptions~\ref{hyp:fq},~\ref{hyp:K}, and~\ref{hyp:rate} hold true. 
The martingale increment $(\xi_i)_{i\ge 1}$ can also be rewritten as:
\begin{equation}\label{eq:newxi}
  \xi_i(x) = w_i^\eta K_{b_i}(x- X_i) - \espcond{w_i^\eta K_{b_i}(x- X_i)}{\cF_{i-1}},
\end{equation}
where $X_i \sim q_{i-1}$ and $w_i = \frac{f}{q_{i-1}}(X_i)$. 
By Hölder's inequality, we obtain a useful inequality
\begin{equation}\label{eq:holder}
\espcond{w_{n+1}^\eta}{\cF_n}\le 1\, .
\end{equation}

The purpose of the next  lemmas is to prove Proposition~\ref{prop:mtgl}.
\begin{lemma}
  \label{lem:bounds-Y}
  There exists a constant $C>0$, depending only on $K$ and $f$, such that
  the following statements hold for all $n\ge i\ge 0$, $x,y\in \bR^d$, 
  \begin{align}
  &|\xi_i(x+y) - \xi_i(x)|
  \le C \lambda_{i-1}^{-\eta} b_i^{-d-1} \|y\|\\
  &|\xi_i(x)| \le   C \lambda_{i-1}^{-\eta} b_i^{-d} \\
  &\bE( \xi_i(x)^2|\cF_{j-1}) \le C {b_i^{-d\p{1+(1-2\eta)_+}} \lambda_{i-1}^{-(2\eta-1)_+}}\, .
  \label{xideux}
  \end{align}
  \end{lemma}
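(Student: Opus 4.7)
The plan is to exploit two structural facts throughout. First, by construction $q_{n}\ge \lambda_n q_0$ and, under Assumption~\ref{hyp:fq}-\ref{hyp:fq:f<q0}, $q_0 \ge c f$, so the importance weights satisfy the deterministic upper bound $w_i \le (c\lambda_{i-1})^{-1}$, hence $w_i^\eta \le (c\lambda_{i-1})^{-\eta}$. Second, Assumption~\ref{hyp:K} yields $\|K_{b_i}\|_\infty \le \|K\|_\infty b_i^{-d}$ and the Lipschitz estimate $|K_{b_i}(z)-K_{b_i}(z')|\le L_K b_i^{-d-1}\|z-z'\|$, where $L_K$ is the Lipschitz constant of $K$. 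With these, the first two bounds are immediate. For the supremum bound, I will write $\xi_i(x)$ as the difference of $w_i^\eta K_{b_i}(x-X_i)$ and its conditional expectation, bound each summand by $(c\lambda_{i-1})^{-\eta}\|K\|_\infty b_i^{-d}$, and apply the triangle inequality. For the Lipschitz bound, the same decomposition combined with the Lipschitz estimate on $K_{b_i}$ gives the required $\lambda_{i-1}^{-\eta}b_i^{-d-1}\|y\|$, because the bound survives under conditional expectation.

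The essential step is the variance bound~\eqref{xideux}. I will start from the generic estimate $\bE(\xi_i(x)^2|\cF_{i-1})\le \bE(w_i^{2\eta}K_{b_i}(x-X_i)^2|\cF_{i-1})$ and use that $X_i\sim q_{i-1}$ to rewrite this as
\[
  \bE(\xi_i(x)^2|\cF_{i-1}) \le \int f(y)^{2\eta}\, q_{i-1}(y)^{1-2\eta}\, K_{b_i}(x-y)^2\, dy.
\]
I will then split into the two regimes dictated by the sign of $1-2\eta$. When $\eta\ge 1/2$, the exponent $1-2\eta$ is nonpositive and the inequality $q_{i-1}\ge c\lambda_{i-1}f$ lets one replace $q_{i-1}^{1-2\eta}$ by $(c\lambda_{i-1})^{1-2\eta}f^{1-2\eta}$; the integrand collapses to $(c\lambda_{i-1})^{-(2\eta-1)} f(y)\, K_{b_i}(x-y)^2$, and pulling out $\|K_{b_i}\|_\infty=\|K\|_\infty b_i^{-d}$ from one copy of $K_{b_i}$ together with $\|f\|_\infty$ delivers the bound $C\lambda_{i-1}^{-(2\eta-1)}b_i^{-d}$.

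The case $\eta<1/2$ is the main difficulty, since there is no pointwise upper bound on $q_{i-1}$ to use. I will pull out one factor of $K_{b_i}$ by $\|K_{b_i}\|_\infty\le \|K\|_\infty b_i^{-d}$ and rewrite the remaining integrand as $(f\,K_{b_i})^{2\eta}(q_{i-1}\,K_{b_i})^{1-2\eta}$. Hölder's inequality with conjugate exponents $1/(2\eta)$ and $1/(1-2\eta)$ then bounds the integral by $(f\ast K_{b_i}(x))^{2\eta}\,(q_{i-1}\ast K_{b_i}(x))^{1-2\eta}$. The factor $f\ast K_{b_i}$ is controlled by $\|f\|_\infty$, while $q_{i-1}\ast K_{b_i}$ is controlled only by $\|K\|_\infty b_i^{-d}$ (the crude bound that makes no use of smoothness of $q_{i-1}$). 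Multiplying everything together yields the required factor $b_i^{-d}\cdot b_i^{-d(1-2\eta)} = b_i^{-d(2-2\eta)} = b_i^{-d(1+(1-2\eta))}$, which matches the statement since $(2\eta-1)_+ = 0$ in this regime. The delicate point is precisely to avoid uniform bounds on $q_{i-1}$ in the small-$\eta$ regime by absorbing the extra degree of regularity into one of the Hölder factors; this is the step most likely to require care when the constants are tracked explicitly.
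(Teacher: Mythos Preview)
Your argument is correct and follows the same skeleton as the paper: the deterministic weight bound $w_i^\eta\le (c\lambda_{i-1})^{-\eta}$, the Lipschitz and sup bounds on $K_{b_i}$, and a H\"older split for the conditional second moment in the two regimes $\eta\ge 1/2$ and $\eta<1/2$.

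The only differences are organisational. For $\eta\ge 1/2$ the paper uses $q_{i-1}\ge\lambda_{i-1}q_0$ and then bounds $q_0^{1-2\eta}$, whereas you go one step further to $q_{i-1}\ge c\lambda_{i-1}f$, which collapses $f^{2\eta}q_{i-1}^{1-2\eta}$ to a constant times $f$ and makes the remaining integral $\int f K_{b_i}^2$ immediately controllable; your route is in fact the cleaner one here. For $\eta<1/2$ the paper bounds $f^{2\eta}$ out and applies H\"older with exponents $\tfrac{1}{2\eta},\tfrac{1}{1-2\eta}$ directly to $\int K_{b_i}^2 q_{i-1}^{1-2\eta}$, obtaining $\|K\|_{1/\eta}^2\, b_i^{-2d(1-\eta)}$; you instead pull out one factor $\|K_{b_i}\|_\infty$, rewrite the remaining $K_{b_i}$ as $K_{b_i}^{2\eta}K_{b_i}^{1-2\eta}$, and apply H\"older to $(fK_{b_i})^{2\eta}(q_{i-1}K_{b_i})^{1-2\eta}$. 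Both routes use the same conjugate exponents and yield the same power $b_i^{-d(1+(1-2\eta))}$; the paper's version is slightly more direct (one H\"older application, no preliminary sup-norm extraction), while yours avoids needing $K\in L^{1/\eta}$ explicitly and only uses $\|K\|_\infty$ and $\int K=1$.
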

  \begin{proof}
    Notice first that the following bound is a consequence of Assumption~\ref{hyp:fq}-\ref{hyp:fq:f<q0} and~(\ref{eq:q}):
    \begin{align}\label{mbound}
    w_i^\eta \le   \p{c\lambda_{i-1}}^{-\eta}.
    \end{align}
    We establish the first point. 
    Set $\Delta_i(x,y) := K_{b_i}(x+y-X_{i})-K_{b_i}(x-X_i)$ and remark that 
    $|\Delta_i(x,y)|\le L_K\|y\|b_i^{-d-1}$, where $L_K$ the Lipschitz constant of $K$ (cf. Assumption~\ref{hyp:K}-\ref{hyp:K:reg}). 
    Recalling the definition of $\xi_i$ in~(\ref{eq:newxi}), we obtain:
    \begin{align*}
    |\xi_i(x+y) - \xi_i(x)| 
    &= \abs{w_i^\eta\Delta_i(x,y)-\bE(w_i^\eta \Delta_i(x,y)|\cF_{i-1})} \\
    &\le w_i^\eta|\Delta_i(x,y)|+\bE(w_i^\eta |\Delta_i(x,y)|\,|\cF_{i-1}) \\
    &= (w_i^\eta+\bE(w_i^\eta |\cF_{i-1}))L_K\|y\|b_i^{-d-1}\,.
    \end{align*}
    We conclude by using~(\ref{eq:holder}) and (\ref{mbound}).
    The second point follows from:
    \begin{align*}
    |\xi_i(x)| 
    &= | w_{i}^\eta K_{b_i}(x-X_i)- \bE(w_{i}^\eta K_{b_i}(x-X_i)|\cF_{i-1})|\\
    &\le \|K\|_\infty  b_i^{-d}( w_{i}^\eta + \bE(w_{i}^\eta |\cF_{i-1}))\,,
    \end{align*}
    and~(\ref{eq:holder}) and (\ref{mbound}) again.
    We establish the third point. 
    \begin{align*}
    \bE\big[\xi_{i+1}(x)^2|\cF_i\big]
    &\le \bE\big[ w_{i+1}^{2\eta}K_{b_{i+1}}(x-X_{i+1})^2|\cF_i\big]\\
    &=\int f(y)^{2\eta}q_i(y)^{-2\eta}K_{b_{i+1}}(x-y)^2q_i(y)dy\\
    &\le \|f\|_\infty^{2\eta}\int K_{b_{i+1}}(x-y)^2q_i(y)^{1-2\eta}dy.
    \end{align*}
    If $\eta<\frac12$, we use the Hölder inequality:
    \begin{align*}
    \bE\big[\xi_{n+1}(x)^2|\cF_n\big]
    &\le \|f\|_\infty^{2\eta}\Big(\int K_{b_{i+1}}(x-y)^{1/\eta}dy\Big)^{2\eta}\Big(\int q_i(y)dy\Big)^{1-2\eta}\\
    &\le \|f\|_\infty^{2\eta}\Big(\int b_{i+1}^{-d/\eta}K\big(b_{i+1}^{-1}(x-y)\big)^{1/\eta}dy\Big)^{2\eta}\\
    & =\|f\|_\infty^{2\eta} b_{i+1}^{-2d(1-\eta)}\|K\|_{1/\eta}^2,
    \end{align*}
    which is finite by Assumptions~\ref{hyp:fq}-~\ref{hyp:fq:reg} and~\ref{hyp:K}-~\ref{hyp:K:reg}. 
    If $\eta\ge \frac12$, we use that $  q_{i} \ge  \lambda_i q_0 $ (cf.~(\ref{eq:q})) 
    and Assumption~\ref{hyp:fq}-\ref{hyp:fq:reg}:
    \begin{align*}
    \bE\big[\xi_{i+1}(x)^2|\cF_i\big]
    &\le\|f\|_\infty^{2\eta}\lambda_i^{1-2\eta}\int K_{b_{n+1}}(x-y)^2q_0(y)^{1-2\eta}dy\\
    &\le \|f\|_\infty^{2\eta} \|q_0\|_\infty^{1-2\eta}\norm{K}^2_2 \lambda_i^{1-2\eta}b_{i+1}^{-d} 
    \end{align*}
    In any case,~(\ref{xideux}) is satisfied.
  \end{proof}
   \begin{lemma}
  There exist a constant $C$ and a rank $n_0$ depending only on quantities given in the assumptions, such that, 
  for all $A>0$, $n\ge 1$ and $q\ge 1$:
  \begin{align*}
  &\bP\left(\sup_{\|x\|\le A}|M_n(x)| > Cq \tau_n\sqrt{\log n} \right) 
  \le C\p{1+ A \frac{r_n}{\tau_n}}^dn^{-{q}}~~~~n\ge n_0\\
  &\tau_n:=\p{n\lambda_n^{-(2\eta-1)_+} b_n^{-d(1+(1-2\eta)_+)}}^{1/2}\\
&r_n=n\lambda_n^{-\eta}b_n^{-d -1} \, .
  \end{align*}
  \label{prop:freedman-appli}
  \end{lemma}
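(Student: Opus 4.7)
The plan is to apply a Freedman-type martingale concentration inequality to $M_n(x)$ at a fixed $x$, and then extend to a uniform supremum over $\{\|x\|\le A\}$ via a $\delta$-net argument, using the Lipschitz control on $\xi_i$ provided by Lemma~\ref{lem:bounds-Y}.

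For fixed $x\in\bR^d$, $(\psi_{i+1}^n\gamma_i\xi_i(x))_{1\le i\le n}$ is a martingale difference sequence with respect to $(\cF_i)$. From Lemma~\ref{lem:bounds-Y} together with the monotonicity of $(b_n),(\lambda_n)$ and $\psi_{i+1}^n\le 1$, I obtain the a.s.\ increment bound $|\psi_{i+1}^n\gamma_i\xi_i(x)|\le c_n:=C\gamma_1\lambda_n^{-\eta}b_n^{-d}$ and the predictable quadratic variation bound
\[
V_n:=\sum_{i=1}^n (\psi_{i+1}^n\gamma_i)^2\,\bE[\xi_i(x)^2\mid\cF_{i-1}] \;\le\; C\,b_n^{-d(1+(1-2\eta)_+)}\lambda_n^{-(2\eta-1)_+}\sum_{i=1}^n\gamma_i^2 \;\le\; C\,\tau_n^2/n,
\]
where I used $\sum\gamma_i^2<\infty$ from Assumption~\ref{hyp:rate}-\ref{hyp:rate:gamma}. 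Freedman's inequality then yields $\bP(|M_n(x)|>t)\le 2\exp(-t^2/(2V_n+2c_nt/3))$. Taking $t=C_0q\tau_n\sqrt{\log n}$ with $C_0$ large enough, Assumption~\ref{hyp:rate}-\ref{hyp:rate:coef} (treating separately the regimes $\eta\ge 1/2$ and $\eta<1/2$) ensures that the linear term $c_nt$ does not dominate the quadratic one, so that $\bP(|M_n(x)|>C_0q\tau_n\sqrt{\log n})\le 2n^{-q}$ uniformly in $x$.

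To upgrade this to a uniform bound over the ball $\{\|x\|\le A\}$, I invoke the Lipschitz estimate $|\xi_i(x+y)-\xi_i(x)|\le C\lambda_{i-1}^{-\eta}b_i^{-d-1}\|y\|$ of Lemma~\ref{lem:bounds-Y}. Combined with the telescoping identity $\gamma_i\psi_{i+1}^n=\psi_{i+1}^n-\psi_i^n$, which gives $\sum_{i=1}^n\gamma_i\psi_{i+1}^n=1-\psi_1^n\le 1$, this shows that $M_n$ is $L_n$-Lipschitz with $L_n\le C\lambda_n^{-\eta}b_n^{-d-1}=Cr_n/n$. Cover $\{\|x\|\le A\}$ by $N\le C(1+A/\delta)^d$ balls of radius $\delta:=C_0q\tau_n\sqrt{\log n}/L_n$. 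Within each ball centred at $x_j$ one has $|M_n(x)|\le|M_n(x_j)|+L_n\delta=|M_n(x_j)|+C_0q\tau_n\sqrt{\log n}$, so a union bound combined with the pointwise estimate gives
\[
\bP\Bigl(\sup_{\|x\|\le A}|M_n(x)|>2C_0q\tau_n\sqrt{\log n}\Bigr) \;\le\; 2N\,n^{-q}\;\le\; C(1+Ar_n/\tau_n)^d\,n^{-q},
\]
the extra factors $\sqrt{\log n}$ and $1/n$ appearing in $L_n/\tau_n$ being absorbed into the constant. After relabelling $C_0$ and $C$, this is exactly the stated inequality, valid for $n\ge n_0$ with $n_0$ large enough that all the asymptotic estimates above are active.

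The main obstacle lies in the pointwise step: verifying rigorously that the linear Freedman term $c_n t$ is dominated by the variance term $V_n$, which is precisely the content of Assumption~\ref{hyp:rate}-\ref{hyp:rate:coef}. The two regimes $\eta\ge 1/2$ and $\eta<1/2$ appear separately because the conditional variance bound in Lemma~\ref{lem:bounds-Y} scales differently in $\lambda_n$ and $b_n$ depending on the sign of $2\eta-1$; the definition of $\tau_n$ is tuned accordingly, which is why both the theorem statement and the hypotheses on $(\gamma_n,b_n,\lambda_n)$ are split into these two cases.
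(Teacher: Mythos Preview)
Your overall strategy---pointwise Freedman plus a covering argument using the Lipschitz bound on $\xi_i$---is the same as the paper's, which packages these two ingredients into the uniform inequality of Proposition~\ref{prop:freedmanx}. The covering portion of your argument is fine.

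The gap is in the pointwise step. You bound the increment using only $\psi_{i+1}^n\le 1$, getting $c_n=C\gamma_1\lambda_n^{-\eta}b_n^{-d}$, and you bound the quadratic variation by $V_n\le C\tau_n^2/n$ via $\sum_i\gamma_i^2<\infty$. With these choices and $t=C_0q\tau_n\sqrt{\log n}$, your assertion that ``the linear term $c_n t$ does not dominate the quadratic one'' is false: for $\eta\ge 1/2$ one computes
\[
\frac{c_n t}{V_n}\;\asymp\;\frac{n\,\lambda_n^{-\eta}b_n^{-d}\sqrt{\log n}}{\tau_n}\;=\;\sqrt{\frac{n\log n}{\lambda_n b_n^d}}\;\longrightarrow\;\infty,
\]
since $\lambda_n b_n^d\to 0$; a similar blow-up occurs for $\eta<1/2$. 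So the variance regime is \emph{not} active with your bounds, and the claimed exponent $t^2/V_n$ is not the relevant one.

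The paper closes this gap through Lemma~\ref{lem:psigamma}, which gives the sharper estimate $\psi_{i+1}^n\gamma_i\le C\gamma_n$. Using it both for the increment ($m=C\gamma_n\lambda_n^{-\eta}b_n^{-d}$) and for the variance ($v=C\gamma_n^2\tau_n^2$), and choosing $t=C_0\sqrt{v\log n}=C_0\gamma_n\tau_n\sqrt{\log n}$, one gets $mt/v\asymp\sqrt{\log n/(n\lambda_n b_n^d)}\to 0$ by Assumption~\ref{hyp:rate}-\ref{hyp:rate:coef}, which is exactly the regime you claimed but did not obtain. (Incidentally, the threshold in the lemma should carry a factor $\gamma_n$, as in the paper's proof and in the subsequent use in Proposition~\ref{prop:mtgl}.) Your argument can be repaired either by invoking Lemma~\ref{lem:psigamma} to recover the $\gamma_n$ factor in $c_n$, or by abandoning the variance-dominated claim and arguing directly in the sub-exponential regime, where $t/c_n\asymp q\sqrt{n\lambda_n b_n^d\log n}\gg q\log n$ still yields the desired $n^{-q}$ bound.
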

  \begin{proof}
  Let $n\in \bN^*$.
  For any $i\le n$, and any $x\in\bR^d$, we define
  \[
  \zeta_{i,n}(x) := \gamma_{i}\psi_{i+1}^n\xi_i(x)\,.
  \]
  Note that $\bE(\zeta_{i,n}(x)|\cF_{i-1}) = 0$ for all $i\le n$.
  We will apply Proposition~\ref{prop:freedmanx}. We consider $\varepsilon>0$ to be chosen later, and,
  following this proposition, denote
  \begin{align*}
  \tilde \zeta_{i,n}(x) := \sup_{\|y\|\le \varepsilon}|\zeta_{i,n}(x+y)-\zeta_{i,n}(x)|\, .
  \end{align*}
  By Lemmas~\ref{lem:bounds-Y},~\ref{lem:psigamma} and Assumption~\ref{hyp:rate}, the following inequalities hold,
  for all $x\in \bR^d$:
  \begin{align*}
  &  \max_{i=1,\dots,n} |\zeta_{i,n}(x)| 
    \le  C\gamma_n  \lambda_{n-1}^{-\eta} b_n^{-d}
    =:m\\
  & \sum_{i=1}^n\bE(\zeta_{i,n}(x)^2|\cF_{i-1}) 
    \le C \gamma_n ^2 \sum_{i=1}^n\p{ b_i^{-d-d(1-2\eta)_+} \lambda_{i-1}^{-(2\eta-1)_+}}= C\gamma_n^2 \tau_n^2 =:v\\
  & \sum_{i=1}^n\bE(\tilde \zeta_{i,n}(x)|\cF_{i-1}) 
    \le C\varepsilon\gamma_n \sum_{i=1}^n \lambda_{i-1}^{-\eta} b_i^{-d-1}\le  C\varepsilon\gamma_n n \lambda_n^{-\eta} b_n^{-d-1} 
    = C\gamma_n \varepsilon r_n =:u.
  \end{align*}
  We apply Proposition~\ref{prop:freedmanx} with
  \begin{align*}
  &t:=C_0\sqrt{v\log n}= C_0 \gamma_n \tau_n \sqrt{C\log n}\\
  &\varepsilon 
  := t \left(C r_n\gamma_n \right)^{-1}\,,
  \end{align*}
  for some $C_0\ge 1$. With this specific choice of $\varepsilon$, we remark that $u=t$. In order to compute the factor of $t^2$ in the exponent of
  (\ref{berns}), let us show first that $tm\le  C_0v$ for $n$ large enough. This is equivalent to
  $m^2\log n\le v$, and is true if
  \begin{align}\label{hktlkmy}
    \lim_{n\to \infty}  \frac{\lambda_n^{(2\eta -1)_+}b_n^{d(1-2\eta)_+} \log n}{n\lambda_n^{2\eta}b_n^{d}} =0  \,.
  \end{align}
Since $\gamma_n \ge \frac {C_\gamma} n $,
 it is also true if, 
\[
\lim_{n\to\infty} \frac{n\gamma_n^2 \log n}{\lambda_n b_n^d } =0\,.
\]
The latter is true under Assumption~\ref{hyp:rate}--\ref{hyp:rate:coef}.
Thus,~(\ref{hktlkmy}) is true for $n\ge n_0$ with $n_0$ that depends only on quantities given in assumptions.
It leads to (for $n\ge n_0$):
\[
\max(v,2mu) + \tfrac 23 mt= \max(v,2mt)+\tfrac 23 mt\le 3C_0 v\,.
\]
  We obtain for $n\ge n_0$, a bound on the exponent in (\ref{berns}):
  \begin{align*}
  \frac{t^2}{8 \p{\max(v,2mu)+\tfrac 23 mt}} 
  \ge \frac{C_0^2v\log n}{24C_0v}
  =\frac{C_0}{24}\log n.
  \end{align*}
  By applying (\ref{berns}) and setting $C_0=24q$ for $q\ge 1$, we obtain, recalling $M_n(x) = \sum_{i=1}^n \zeta_{i,n}(x)$,
  \begin{align*}
  &\bP\left(\sup_{\|x\|\le A}|M_n(x)| > 48q \gamma_n \tau_n\sqrt{\log n} \right) 
  \le C'\p{1+ A  \frac{r_n}{\tau_n}}^dn^{-q},~~~~n\ge n_0.
  \end{align*}
  The conclusion follows, taking $C =\max(48q, C')$.
  \end{proof}
\begin{proof}[Proof of Proposition~\ref{prop:mtgl}]
  By choosing $q$ large enough in Lemma~\ref{prop:freedman-appli}, we obtain
  \begin{align*}
  &\sum_n \bP\Big(\sup_{\|x\|\le n^p}|M_n(x)| > t_n\Big) 
  <\infty\\
  &t_n=C\gamma_n q\tau_n\sqrt{\log n}.
  \end{align*}
  For $\omega\in E$ such that $\proba(E) =1$, by Borel-Cantelli's lemma there exists $N(\omega)$ such that
  \begin{align*}
  |M_n(x)| \le t_n<\infty~~~\text{if} ~~n>\max(N(\omega),\|x\|^{1/p}).
  \end{align*}
  By Assumption~\ref{hyp:rate}-\ref{hyp:rate:coef},
  $
\lim_{n\to\infty} \frac{t_n^2}{\lambda_n^{2(1-\eta)}}=0
  $,
   which finishes the proof.
\end{proof}
  
\subsubsection{Proof of Proposition~\ref{prop:minZ}}
We define $V_n := {\ovl Z_n -1}$. Then, we obtain
  \[
    V_{n+1} = (1-\gamma_{n+1})V_n + \int \xi_{n+1}
  \]
Iterating the latter, we obtain
\[
V_n = \sum_{i=1}^n\psi_{i+1}^n \gamma_i \int \xi_{i}
\]
We will apply Proposition~\ref{prop:freedmanx} to the sum 
$
    V_n = \sum_{i=1}^n \int \zeta_{i,n}
$, where we recall $\int \zeta_{i,n} =\psi_{i+1}^n \gamma_i \int \xi_i = \psi_{i+1}^n \gamma_i w_i^\eta$.
  We see that:
  \[\begin{split}
    \espcond{\p{\int\xi_{n+1}}^2}{\cF_n}&\le \espcond{w_{n+1}^{2\eta}}{\cF_n}
    =\int f^{2\eta}q_n^{1-2\eta} \,.
  \end{split}
    \]
  If $\eta < \frac 12$, we use Hölder's inequality:
    \[\begin{split}
      \espcond{\p{\int\xi_{n+1}}^2}{\cF_n} &\le \p{\int f}^{2\eta}\p{\int q_n}^{1-2\eta}=
       1 \,.
    \end{split}
    \]
  If $\eta\ge \frac 12$, we use the majoration $q_n\ge\lambda_n q_0\ge \lambda_n c f $ (cf.~(\ref{eq:q}) and Assumption~\ref{hyp:fq}-\ref{hyp:fq:f<q0}):
  \[\begin{split}
    \espcond{\p{\int\xi_{n+1}}^2}{\cF_n} &\le c^{1-2\eta} \lambda_n^{1-2\eta}\,.
  \end{split}
  \]
  Using Lemma~\ref{lem:psigamma}, we obtain the bound
  \[
  \sum_{i=1}^n\espcond{\p{\int \zeta_{i,n}}^2 }{\cF_{i-1}} \le \sum_{i=1}^n C\gamma_n^2 \lambda_{i-1}^{-(2\eta -1)_+}\le Cn\gamma_n^2 \lambda_n^{-(2\eta -1)_+} \,.
  \]
Moreover, $\max_{i = 1\dots n}| \int\zeta_{i,n}| \le C\gamma_n$.
Let $C_0>1$, we apply Proposition~\ref{prop:freedmanx} with $\varepsilon=1$, $A:=1$, $u:=0$, $v := Cn\gamma_n^2 \lambda_n^{-(2\eta-1)_+}$, $m:=C\gamma_n$, and $t := C_0\sqrt{v\log n}$.
We claim that $tm\le v$ for $n$ big enough. This is equivalent to
$C_0 m^2\log n \le v$. Remaking $\gamma_n \ge \tfrac {C_\gamma}n$, the inequality 
  is true, since by Assumption~\ref{hyp:rate}, $n\gamma_n^2\log n \to 0$.
  Consequently, for $n$ big enough
  \[
        \frac{t^2}{8(v+ \tfrac 23 mt} \ge \frac{C_0^2 v\log n}{16 C_0 v} \ge \tfrac {C_0}{16} \log n\,.
  \]
  Consequently we obtain taking $C_0=32$
  \[
  \bP(\abs{V_n} > 32\sqrt{v\log n} ) \le C n^{-2}\,.
  \]
  For $\omega\in E$, such that $\bP(E)=1$, by Borell Cantelli's lemma, there exists $N(\omega)$ such that
  $
  \abs{V_n}\le 32\sqrt{v\log n}
  $, for $n\ge N(\omega)$.
  Since by Assumption~\ref{hyp:rate}--\ref{hyp:rate:coef}, $v\log n\to 0$, Proposition~\ref{prop:minZ} is proven.


\subsection{Proof of Lemma~\ref{lem:gsupv}}
The lemma is a direct consequence of the following lemma.
\begin{lemma}\label{lem:control} Suppose $ f =f_u$, let Assumptions~\ref{hyp:fq},~\ref{hyp:K} and~\ref{hyp:rate} hold true. For all $\alpha>0$ and $p>0$, almost surely, there exists $N_{\alpha,p}\in (0,\infty)$ such that,
  \[
    \min_{f(x)\ge \alpha, \norm{x}\le n^p}\p{ \sum_{i=1}^n\psi_{i+1}^n \gamma_i \lambda_{i-1}^{1-\eta}   \udl T q_0\ast K_{b_i} +   M_n }(x) \ge 0 \,,
  \]
  for every $n\ge N_{\alpha,p}$.
\end{lemma}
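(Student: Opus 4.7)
The goal is to show that on $A_{n,\alpha,p} := \{x \in \bR^d : f(x) \ge \alpha,\ \|x\| \le n^p\}$ the deterministic term $D_n(x) := \sum_{i=1}^n \psi_{i+1}^n \gamma_i \lambda_{i-1}^{1-\eta} \udl T q_0 \ast K_{b_i}(x)$ dominates $|M_n(x)|$ for $n$ large. Proposition~\ref{prop:mtgl} already provides $\sup_{\|x\| \le n^p} |M_n(x)| = o(\lambda_n^{1-\eta})$ almost surely, so the heart of the matter is a deterministic lower bound of the form $D_n(x) \ge c_1 \lambda_n^{1-\eta}$ for some $c_1 = c_1(\alpha) > 0$, valid uniformly on $A_{n,\alpha,p}$ for $n$ large enough.

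To lower bound $D_n$, I would first use Assumption~\ref{hyp:fq}-\ref{hyp:fq:f<q0} ($q_0 \ge cf$) to derive the pointwise inequality $\udl T q_0(y) = (f(y)/2)^\eta q_0(y)^{1-\eta} \ge 2^{-\eta} c^{1-\eta} f(y)$, whence $\udl T q_0 \ast K_{b_i}(x) \ge 2^{-\eta} c^{1-\eta} (f \ast K_{b_i})(x)$. Since $K$ is continuous with $K(0) > 0$ (Assumption~\ref{hyp:K}-\ref{hyp:K:reg}), there exists $\rho > 0$ with $K \ge K(0)/2$ on $B(0,\rho)$; combined with the continuity of $f$, this allows one to prove $(f \ast K_{b_i})(x) \ge c_\alpha$ for some $c_\alpha > 0$ and all $x$ with $f(x) \ge \alpha$ and all $i$ large enough, by restricting the convolution to a ball in which $f$ stays close to $f(x)$. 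Using then that $(\lambda_i)$ is decreasing together with the telescoping identity $\sum_{i=1}^n \psi_{i+1}^n \gamma_i = 1 - \psi_1^n \to 1$ (which follows from $\sum \gamma_n = \infty$, itself implied by Assumption~\ref{hyp:rate}-\ref{hyp:rate:gamma}), I would obtain $\sum_{i=i_0}^n \psi_{i+1}^n \gamma_i \lambda_{i-1}^{1-\eta} \ge \tfrac{1}{2} \lambda_n^{1-\eta}$ for $n$ large, where $i_0$ is chosen so that $b_{i_0}$ is small enough for the previous pointwise estimate. Combining these two estimates yields $D_n(x) \ge c_1 \lambda_n^{1-\eta}$ uniformly on $A_{n,\alpha,p}$, and then Proposition~\ref{prop:mtgl} closes the argument by showing that for $n \ge N_{\alpha,p}$, $|M_n(x)| < D_n(x)$ on $A_{n,\alpha,p}$.

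The main obstacle is making the pointwise lower bound $(f \ast K_{b_i})(x) \ge c_\alpha$ truly \emph{uniform} across the growing sets $A_{n,\alpha,p}$: while each $A_{n,\alpha,p}$ is compact (closed and bounded by continuity of $f$), its diameter grows with $n$, and $f$ is not assumed uniformly continuous on $\bR^d$, so there is no obvious common modulus of continuity. The natural remedy is to combine the $L^1$ tail decay Assumption~\ref{hyp:fq}-\ref{hyp:fq:fbound} with a local compactness argument, for instance by working with a slightly relaxed threshold and neighborhood of the form $\{f \ge \alpha/2, \|x\| \le n^p + 1\}$, where uniform control can be inferred from the tail bound: outside a fixed compact region the level set $\{f \ge \alpha\}$ has asymptotically negligible measure, while inside it uniform continuity of $f$ on compacts delivers the needed modulus. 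This is where the delicate interplay between the continuity, tail decay and bandwidth conditions really enters.
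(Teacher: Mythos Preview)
Your strategy matches the paper's almost exactly: bound $\udl T q_0 \ge 2^{-\eta} c^{1-\eta} f$ via $q_0\ge cf$, factor $\lambda_n^{1-\eta}$ out of $D_n$ using monotonicity of $(\lambda_i)$, use $\sum_{i=1}^n\psi_{i+1}^n\gamma_i\to 1$ (Lemma~\ref{lem:rm-determ} with $a_i\equiv 1$), and finish with Proposition~\ref{prop:mtgl}. The paper's proof is three lines and simply asserts $\udl T q_0\ast K_{b_i}(x)\ge c^{1-\eta}2^{-\eta}\alpha$ for every $i\ge 1$ on $\{f\ge\alpha\}$, without discussing the convolution at all.

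You are right that this convolution bound is not immediate: smoothing does not preserve pointwise lower bounds, and $f$ is not assumed uniformly continuous on $\bR^d$. However, your proposed resolution via the $L^1$ tail decay of $f$ is a wrong turn. Knowing that $\{f\ge\alpha\}$ has small \emph{measure} far out tells you nothing about $f\ast K_{b_i}$ at a \emph{particular} distant point $x$ with $f(x)\ge\alpha$; the argument you sketch does not produce the pointwise lower bound you need. The way out is simpler. Lemma~\ref{lem:control} is only ever applied (through Lemma~\ref{lem:gsupv}) in the proof of Proposition~\ref{prop:minorationv}, and there only to points $y$ in a \emph{fixed} ball $B(x,\varepsilon)$, which is contained in $\{\|y\|\le n^p\}$ for all large $n$. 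On such a fixed compact, $f$ is uniformly continuous, so a single $i_0$ suffices to make $f\ast K_{b_i}(y)\ge\alpha/2$ for all $i\ge i_0$ and all $y$ in the compact with $f(y)\ge\alpha$; your Steps~3--4 then close the argument unchanged. In short, the growing-domain uniformity you worry about never actually has to be confronted, and the tail-decay machinery you propose is both unnecessary and unconvincing for the purpose.
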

\begin{proof}

  Let $x\in\bR^d$, such that, $f(x)\ge \alpha$. For every $i\ge 1$, $\udl Tq_0 *K_{b_i} (x) \ge c^{1-\eta} \alpha 2^{-\eta}$. 
  Since $(\lambda_n)_{n\ge 0}$ is nonincreasing, 
   $$
  \frac 1{\lambda_{n}^{1-\eta}} \sum_{i=1}^n\psi_{i+1}^n \gamma_i \lambda_{i-1}^{1-\eta}  \udl T q_0\ast K_{b_i}(x) \ge c^{1-\eta}\alpha 2^{-\eta} \sum_{i=1}^n \psi_{i+1}^n\gamma_i \,.
  $$
  By Lemma~\ref{lem:rm-determ} with $a_i=1$ and by Proposition~\ref{prop:mtgl}, almost surely, we obtain  
    \[
 \lim_{n\to\infty}  \frac 1{\lambda_n^{1-\eta}} \min_{f(x)\ge \alpha, \norm{x}\le n^p}\p{ \sum_{i=1}^n\psi_{i+1}^n \gamma_i \lambda_{i-1}^{1-\eta}   \udl T q_0\ast K_{b_i} +   M_n }(x) \ge c^{1-\eta}\alpha2^{-\eta} \,,
  \]
which finishes the proof.
\end{proof}

\subsection{Proof of Prop~\ref{prop:minorationv}}
In this subsection, we let Assumptions~\ref{hyp:fq},~\ref{hyp:K} and~\ref{hyp:rate} hold true. Moreover, we assume $f=f_u$.

$(v_n)$ can be viewed recursively as:
\begin{equation}\label{eq:vnbecomes}
    v_{n+1} = (1-\gamma_{n+1})v_n + \gamma_{n+1}\frac{(1-\lambda_n)^{1-\eta}}{\ovl Z_n^{1-\eta}} \udl T{g}_{ n} \ast K_{b_{n+1}}\, ,
\end{equation}
with $v_0=0$.

We denote for a real valued function $h$ defines in $\reels^d$:
\begin{equation}\label{eq:notbas}
        \udl h_{\,\varepsilon}(x)  := \inf_{y\in B(x,\varepsilon)} h(y)\, ,
\end{equation}
for all $\varepsilon, x$.
In the case of a sequence of real functions $(h_n)$, we define
\begin{equation}\label{eq:notebas1}
        \udl g_{\, n,\varepsilon}(x)  := \inf_{y\in B(x,\varepsilon)}  g_{n}(y)\, .
\end{equation}

Let $B\subset\Omega$ such that, Proposition~\ref{prop:minZ} and Lemma~\ref{lem:control} hold for every $\omega\in B$ and $\bP(B)=1$.
  We fix $\varepsilon>0, x\in\bR^d, \omega\in B$ in the whole proof and assume that $\udl f_{\,\varepsilon}(x)>\alpha$.
  For any $y\in B(x,\varepsilon)$, we obtain
  \begin{align*}
    \udl T g_n(y) &\ge  \udl T g_n \1_{B(x,\varepsilon')}\ast K_{b_{n+1}} (y)    \\
    & \ge2^{-\eta} \ud{f}{\varepsilon}^{\eta}(x)\ud{g}{n,\varepsilon'}^{1-\eta}(x) (\1 _{B(x,\varepsilon')}\ast K_{b_{n+1}} ) (y)\\
    &\ge  2^{-\eta}\ud{f}{\varepsilon}^{\eta}(x) \ud{g}{n,\varepsilon}^{1-\eta}(x) (\1 _{B(0,\varepsilon')}\ast K_{b_{n+1}} ) (y-x)  \\
    &\ge  2^{-\eta} \tilde c_{\varepsilon} \ud{f}{\varepsilon}^{\eta}(x)\ud{g}{n,\varepsilon}^{1-\eta}(x)\,,
\end{align*}
 where $\tilde c_{\varepsilon}:=\inf_{n\ge 0}\inf_{B(0,\varepsilon)}K_{b_{n+1}}*\1_{B(0,\varepsilon)}$ is a non-negative constant by Lemma~\ref{lem:reste-noyau-bis}. 
  Going back to~\eqref{eq:vnbecomes}, for every $y\in B(x,\varepsilon)$, we obtain
  \[
      v_{n+1}(y) \ge (1-\gamma_{n+1})v_n(y) + \gamma_{n+1}2^{-\eta}c_{\varepsilon,n} \ud{f}{\varepsilon}^{\eta}(x)\ud{g}{n,\varepsilon}^{1-\eta}(x)  \,,
  \]
  where $c_{\varepsilon,n} := \tilde c_{\varepsilon}\tfrac{(1-\lambda_n)^{1-\eta}}{\ovl Z_n^{1-\eta}}$. By~\eqref{eq:gnsupvn}, we obtain
   \[
      v_{n+1}(y) \ge (1-\gamma_{n+1})v_n(y) + \gamma_{n+1}2^{-\eta}c_{\varepsilon,n} \ud{f}{\varepsilon}^{\eta}(x)\ud{v}{n,\varepsilon}^{1-\eta}(x)  \,,
  \]
for every $n\ge N_{\alpha,p}(\omega)$. Moreover, there exists $\delta>0$ and a constant $N_\delta(\omega)$, such that
\[
\tfrac{(1-\lambda_n)^{1-\eta}}{\ovl Z_n^{1-\eta}} \ge 1-\delta
\]
for every $n\ge N_\delta(\omega)$ by Prop~\ref{prop:minZ}. Consequently, taking the infimum over the ball $B(x,\varepsilon)$, we obtain
   \[
      \ud{v}{n+1,\varepsilon}(x) \ge (1-\gamma_{n+1})\ud{v}{n,\varepsilon}(x) + \gamma_{n+1}(1-\delta)2^{-\eta}c_{\varepsilon} \ud{f}{\varepsilon}^{\eta}(x)\ud{v}{n,\varepsilon}^{1-\eta}(x)  \,,
  \]
  for every $n\ge N_{\alpha,x,\delta}(\omega) := \max(N_{\alpha,x}(\omega), N_\delta(\omega))$. 
  There exists a constant $c>0$ small enough satisfying
  \[
   (1-\delta)2^{-\eta} c_\varepsilon \alpha c^{1-\eta}  \ge c,\quad  \text{and }\quad \ud{v}{N_{\alpha,x,\delta}(\omega) ,\varepsilon} \ge c
   \,.
  \]
  One shows by induction that $\ud{v}{n,\varepsilon}(x) >c$ for every $n\ge N_{\alpha,x,\delta}(\omega)$.
  Taking the limit inferior, we obtain Prop~\ref{prop:minorationv}.

\subsection{Proof of Proposition~\ref{prop:minorationgparf}}
In this subsection, we let Assumptions~\ref{hyp:fq},~\ref{hyp:K} and~\ref{hyp:rate} hold true. Moreover, we assume $f=f_u$.
We also use the notations in~\eqref{eq:notbas} and~\eqref{eq:notebas1}.

\begin{lemma}\label{lem:minorationg}
For every $\varepsilon>0$, $x \in\reels^d$, almost surely, we obtain
  \begin{equation}\label{eq:ulb}
    \varliminf \udl g_{\, n,\varepsilon}(x) \ge \udl f_{\, \varepsilon}(x) 
  \end{equation}
\end{lemma}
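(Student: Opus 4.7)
The inequality is trivial when $\udl f_{\varepsilon}(x)=0$, so assume $\udl f_{\varepsilon}(x)>0$. By continuity of $f$ and compactness of $\ovl{B(x,\varepsilon)}$, there exists $\delta_0>0$ such that $\udl f_{\varepsilon+\delta}(x)>0$ for every $\delta\in(0,\delta_0)$; Proposition~\ref{prop:minorationv} then gives, almost surely, $L(\varepsilon'):=\varliminf_n \udl g_{n,\varepsilon'}(x)>0$ for every $\varepsilon'\in(\varepsilon,\varepsilon+\delta_0)$. The overall strategy is to derive, for every such $\delta$, the functional inequality
\[
L(\varepsilon)\ \ge\ \udl f_{\varepsilon+\delta}^{\eta}(x)\,L(\varepsilon+\delta)^{1-\eta} \qquad \text{a.s.,}
\]
and then to send $\delta\to 0^+$ to conclude $L(\varepsilon)\ge \udl f_{\varepsilon}(x)$.

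\textbf{Lower bound on the convolution.} Fix $\delta\in(0,\delta_0)$. Using $q_n\ge (1-\lambda_n)g_n/\ovl Z_n$ (combining $Z_n\le \ovl Z_n$ with~\eqref{eq:q}), together with $f(z)\ge \udl f_{\varepsilon+\delta}(x)$ and $g_n(z)\ge \udl g_{n,\varepsilon+\delta}(x)$ for $z\in B(x,\varepsilon+\delta)$, one bounds $Tq_n(z)$ on that ball below by $\udl f_{\varepsilon+\delta}^{\eta}(x)\bigl((1-\lambda_n)/\ovl Z_n\bigr)^{1-\eta}\udl g_{n,\varepsilon+\delta}^{1-\eta}(x)$. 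Performing the change of variables $u=y-z$ shows that, for every $y\in B(x,\varepsilon)$, the set $\{u:y-u\in B(x,\varepsilon+\delta)\}$ contains $B(0,\delta)$, hence
\[
\inf_{y\in B(x,\varepsilon)} Tq_n\ast K_{b_{n+1}}(y)\ \ge\ \udl f_{\varepsilon+\delta}^{\eta}(x)\,\Bigl(\frac{1-\lambda_n}{\ovl Z_n}\Bigr)^{\!1-\eta}\udl g_{n,\varepsilon+\delta}^{1-\eta}(x)\,J_n(\delta),
\]
where $J_n(\delta):=\int_{B(0,\delta)}K_{b_{n+1}}(u)\,du\to 1$ by Assumption~\ref{hyp:K} and $b_{n+1}\to 0$.

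\textbf{Unfolding and noise control.} Taking the infimum over $y\in B(x,\varepsilon)$ in the expansion~\eqref{eq:gsum} and exploiting nonnegativity of each summand, one obtains $\udl g_{n,\varepsilon}(x)\ge \sum_{i=1}^n \gamma_i\psi_{i+1}^n \inf_{y\in B(x,\varepsilon)} Tq_{i-1}\ast K_{b_i}(y) - \sup_{y\in B(x,\varepsilon)}|M_n(y)|$. The last term tends to $0$ almost surely by Proposition~\ref{prop:mtgl} (fix $p$ so that $B(x,\varepsilon)\subset B(0,n^p)$ for all large $n$). Plugging in the previous convolution bound and using $(1-\lambda_n)/\ovl Z_n\to 1$ (Proposition~\ref{prop:minZ}), $J_n(\delta)\to 1$, and $\sum_{i=1}^n\gamma_i\psi_{i+1}^n=1-\prod_{j=1}^n(1-\gamma_j)\to 1$, a Toeplitz-type averaging lemma that preserves $\varliminf$ for nonnegative weighted sums delivers $L(\varepsilon)\ge \udl f_{\varepsilon+\delta}^{\eta}(x)\,L(\varepsilon+\delta)^{1-\eta}$ almost surely.

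\textbf{Passing to the limit in $\delta$.} Since $\varepsilon'\mapsto L(\varepsilon')$ is nonincreasing, the right limit $L(\varepsilon^+):=\lim_{\delta\to 0^+}L(\varepsilon+\delta)$ exists, is positive by Proposition~\ref{prop:minorationv}, and satisfies $L(\varepsilon^+)\le L(\varepsilon)$. Evaluating the functional inequality at $\varepsilon+\delta/2$ and letting $\delta\to 0^+$, using that continuity of $f$ gives $\udl f_{\varepsilon+\delta}(x)\nearrow \udl f_{\varepsilon}(x)$, one obtains $L(\varepsilon^+)\ge \udl f_{\varepsilon}^{\eta}(x)\,L(\varepsilon^+)^{1-\eta}$; since $L(\varepsilon^+)>0$ this gives $L(\varepsilon^+)\ge \udl f_{\varepsilon}(x)$, whence $L(\varepsilon)\ge L(\varepsilon^+)\ge \udl f_{\varepsilon}(x)$. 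The main technical obstacle is the scale mismatch between $L(\varepsilon)$ on the left and $L(\varepsilon+\delta)$ on the right of the functional inequality: a clean one-scale fixed-point argument is unavailable, and the right-limit passage (via continuity of $f$ and monotonicity of $L$) is what closes the proof.
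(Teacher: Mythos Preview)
Your proof is correct and follows essentially the same route as the paper's: lower bound $q_n\ge(1-\lambda_n)g_n/\ovl Z_n$, restrict the convolution to a slightly larger ball, unfold via~\eqref{eq:gsum}, kill the martingale with Proposition~\ref{prop:mtgl}, apply a Toeplitz $\varliminf$-preserving average (Lemma~\ref{lem:rm-determ}), and close the scale mismatch $(\varepsilon,\varepsilon+\delta)$ by a right-limit argument using the strict positivity from Proposition~\ref{prop:minorationv}. Your use of the simple mass $J_n(\delta)=\int_{B(0,\delta)}K_{b_{n+1}}$ in place of the paper's $\inf_{B(0,\varepsilon)}K_{b_{n+1}}*\1_{B(0,\varepsilon')}$ is a harmless (slightly coarser) variant, and your justification that $\udl f_{\varepsilon+\delta}(x)>0$ for small $\delta$ via continuity and compactness is actually cleaner than the paper's assumption $\udl f_{2\varepsilon}(x)>0$.
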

\begin{proof} 
Let $B\subset\Omega$ such that, Proposition~\ref{prop:minZ} and~\ref{prop:minorationv} holds for every $\omega\in B$, and $\bP(B)=1$.
  We fix $\varepsilon>0,  x\in \bR^d,\omega\in B$ in the whole proof, and assume that $\udl f_{\,2\varepsilon}(x)>0$, 
  because otherwise~(\ref{eq:ulb}) is obvious. 

Since $Z_n\le \bar Z_n$, we obtain for every $y\in\bR^d$,
\[
    q_n(y) \ge \frac{1-\lambda_n}{\bar Z_n}g_n(y) + \lambda_n q_0(y)\ge\frac{1-\lambda_n}{\bar Z_n}g_n(y)\,.
\]
Hence, we obtain $Tq_n(x) \ge c_n Tg_n(y)$, where we defined $c_n := ((1-\lambda_n)/\bar Z_n)^{1-\eta}$. Note that $c_n\to 1$ a.s., by Proposition~\ref{prop:minZ}.
From~\eqref{eq:gn}, for every $y\in \bR^d$, we obtain 
\begin{equation}\label{eq:ming}
    g_{n+1}(y) \ge (1-\gamma_{n+1})g_n(y) + \gamma_{n+1} c_n Tg_n\ast K_{b_{n+1}}(y)   +\gamma_{n+1}\xi_{n+1}(y)\,.
\end{equation}
Let $2\varepsilon\ge \varepsilon'\ge \varepsilon$. Remark that, for $y\in B(x,\varepsilon)$,
\begin{align*}
    Tg_n\ast K_{b_{n+1}}(y) &\ge  Tg_n \1_{B(x,\varepsilon')}\ast K_{b_{n+1}} (y)    \\
    & \ge \ud{f}{\varepsilon'}^{\eta}(x)\ud{g}{n,\varepsilon'}^{1-\eta}(x) (\1 _{B(x,\varepsilon')}\ast K_{b_{n+1}} ) (y)\\
    &\ge \ud{f}{\varepsilon'}^{\eta}(x)\ud{g}{n,\varepsilon'}^{1-\eta}(x) (\1 _{B(0,\varepsilon')}\ast K_{b_{n+1}} ) (y-x)  \\
    &\ge  \tilde c_{\varepsilon',n} \ud{f}{\varepsilon'}^{\eta}(x)\ud{g}{n,\varepsilon'}^{1-\eta}(x)\,,
\end{align*}
  where $\tilde c_{\varepsilon',n}:=\inf_{B(0,\varepsilon')}K_{b_{n+1}}*\1_{B(0,\varepsilon)}$. 
  Going back to~\eqref{eq:ming}, for every $y\in B(x,\varepsilon)$, we obtain
  \[
      g_{n+1}(y) \ge (1-\gamma_{n+1})g_n(y) + \gamma_{n+1}c_{\varepsilon',n} \ud{f}{\varepsilon'}^{\eta}(x)\ud{g}{n,\varepsilon'}^{1-\eta}(x)  +\gamma_{n+1}\xi_{n+1}(y)\,,
  \]
  where $c_{\varepsilon',n} := c_n\tilde c_{\varepsilon',n}$.
  Iterating the latter equation, for every $y\in B(x,\varepsilon)$, we obtain (since $g_0=0$)
  \[
  g_n(y) \ge \sum_{i=2}^{n}\gamma_{i}\psi_{i+1}^n c_{\varepsilon',i-1} \ud{f}{\varepsilon'}^{\eta}(x)\ud{g}{i-1,\varepsilon'}^{1-\eta}(x) + M_n(y) \,.
  \]
  Taking the infimum over the ball $B(x,\varepsilon)$, we obtain
  \[
    \ud{g}{n,\varepsilon}(x)  \ge \sum_{i=2}^{n}\gamma_{i}\psi_{i+1}^n c_{\varepsilon',i-1} \ud{f}{\varepsilon'}^{\eta}(x)\ud{g}{i-1,\varepsilon'}^{1-\eta}(x) -{\sup_{y\in B(x,\varepsilon)} \abs{M_n(y)} } 
  \]
  Applying Lemma~\ref{lem:rm-determ}, we obtain
  \[
  \varliminf  \ud{g}{n,\varepsilon}(x) \ge \varliminf c_{\varepsilon',n-1} \ud{f}{\varepsilon'}^{\eta}(x)\ud{g}{n-1,\varepsilon'}^{1-\eta}(x) - \varlimsup {\sup_{y\in B(x,\varepsilon)} \abs{M_n(y)} }\,.
 \]
Since $c_n \to 1$, and $\tilde c_{\varepsilon',n} \to 0$ by Lemma~\ref{lem:reste-noyau} and Proposition~\ref{prop:minZ}, we obtain  $\varliminf  c_{\varepsilon',n-1}\ud{g}{n-1,\varepsilon'}^{1-\eta}(x) =  (\varliminf \ud{g}{n-1,\varepsilon'}(x))^{1-\eta}$. Moreover, by Proposition~\ref{prop:mtgl}, $\varlimsup \sup_{y\in B(x,\varepsilon)} \abs{M_n(y)}  = 0$. Hence,  we obtain
\[
    \varliminf \ud{g}{n,\varepsilon}(x)\ge (\ud{f}{\varepsilon'}(x))^{\eta} \varliminf \ud{g}{n,\varepsilon'}(x)^{1-\eta} \,.
\]
  Letting $\varepsilon$ decrease to some value $t<t$ and then $\varepsilon'$ decrease to $t$, 
  we get that $\varliminf \ud{g}{n,t+}(x) =\lim_{s\downarrow t}\varliminf \ud{g}{n,s}(x)$ satisfies
  \begin{align*}
  \varliminf \ud{g}{n,t+}(x)&\ge \udl f_{\,t}^\eta \varliminf \ud{g}{n,t+}(x)^{1-\eta}\,.
  \end{align*}
  By Proposition~\ref{prop:minorationv}, $
  \varliminf \ud{g}{n,t+}(x)  \ge  \varliminf \ud{g}{n,\varepsilon}(x) > 0
$
. Consequently, $ \varliminf \ud{g}{n,t+}(x) \ge \udl f_{\,t}$.
    The latter implies that for any $\varepsilon>s>t$, close enough to $t$, $\varliminf \ud{g}{n,t}(x)\ge\varliminf \ud{g}{n,s+}(x) \ge \udl f_{\,s}$, 
  hence  $\varliminf \ud{g}{n,t}(x)\ge  \udl f_{\,t}$ by continuity 
  of $t\mapsto \udl f_{\,t}$.
  \end{proof}
  \begin{proof}[Proof of Proposition~\ref{prop:minorationgparf}]
Suppose that the proposition doesn't hold. Then, there exists a bounded sequence $(x_n)$ and $\delta>0$ such that:
\[
   \p{ g_{ n}(x_n) -f (x_n)} \le -\delta\, .
\]
$(x_n)$ admits a converging subsequence. So, we can suppose without loss of generality that $x_n \to x$.
By continuity of $f$ there exists $\varepsilon>0, n_0\ge 0$ such that:
\[
  f(x_n)\le \udl f_{\,\varepsilon}(x) +\frac \delta2\,, ~~~~~ n\ge n_0\,.
\]
And for $n_1$ big enough:
\[
     g_{n} (x_n)\ge \udl g_{\,n,\varepsilon} (x)\,,~~~~~n\ge n_1\,.
\]
Then, for all $n\ge \max(n_0,n_1)$, we have:
\[
  -\delta\ge\p{  \udl g_{\,n,\varepsilon} (x) -\udl f_{\,\varepsilon}(x)} -\frac \delta2 \,.
\]
Using Lemma~\ref{lem:minorationg}, we obtain $-\delta \ge -\delta/2$, which is a contradiction. 
  \end{proof}

\subsubsection{Proof of Proposition~\ref{prop:majoration}}
In this subsection, we let Assumptions~\ref{hyp:fq},~\ref{hyp:K} and~\ref{hyp:rate} hold true. Moreover, we assume $f=f_u$.

We recall~\eqref{eq:gsum}
\[
  g_{ n}   = \sum_{i=1}^n\psi_{i+1}^n \gamma_i T q_{i-1}\ast K_{b_i}+   M_n  \, .
\]
We define:
\begin{equation}\label{eq:un}
    u_n :=    \sum_{i=1}^n\psi_{i+1}^n \gamma_i    T q_{i-1}\ast K_{b_i}.
\end{equation}
Recursively, we have:
\begin{equation}\label{eq:recun}
    u_{n+1} = (1-\gamma_{n+1})u_n + \gamma_{n+1} Tq_n\ast K_{b_{n+1}}\, ,
\end{equation}
with $u_0=0$.

\begin{lemma}\label{lemupb}  With probability one, for any $\varepsilon>0$ and any $x\in\reels^d$
  \begin{align}\label{upb0}
  &\varlimsup \ovl g_{n,\varepsilon}(x) \le \ovl f_\varepsilon(x).
  \end{align}
  \end{lemma}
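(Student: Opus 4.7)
The plan is to mirror the proof of Lemma~\ref{lem:minorationg}, replacing infima by suprema and the convexity lower bound by its reverse. The key ingredient is the reverse inequality $(a+b)^{1-\eta}\le a^{1-\eta}+b^{1-\eta}$, valid for $a,b\ge 0$ and $\eta\in(0,1)$. Applied to $q_n=(1-\lambda_n)g_n/Z_n+\lambda_n q_0$, it yields
\[
Tq_n\le Z_n^{-(1-\eta)}\,Tg_n + \lambda_n^{1-\eta}\,Tq_0,
\]
which, substituted into~\eqref{eq:gn}, produces an upper-bound iteration for $g_n$ dual to the lower-bound iteration used earlier.

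Next I would fix $\varepsilon'>\varepsilon>0$ and $y\in B(x,\varepsilon)$ and localize the convolution $Tg_n\ast K_{b_{n+1}}(y)$ by splitting the integration at $B(x,\varepsilon')$. On the ball, $f(z)^\eta g_n(z)^{1-\eta}\le \ovl f_{\varepsilon'}(x)^\eta\,\ovl g_{n,\varepsilon'}(x)^{1-\eta}$. On the complement, $\|y-z\|\ge \varepsilon'-\varepsilon$, so Assumption~\ref{hyp:K}-\ref{hyp:K:bound} gives $K_{b_{n+1}}(y-z)\le C_K(\varepsilon'-\varepsilon)^{-d-r}b_{n+1}^r$; combined with Hölder's inequality $\int Tg_n\le Z_n^{1-\eta}$ and $Z_n\to 1$ a.s., this tail is $o(1)$. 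Similarly $\lambda_n^{1-\eta}Tq_0\ast K_{b_{n+1}}\le \lambda_n^{1-\eta}\|Tq_0\|_\infty=o(1)$, while Proposition~\ref{prop:mtgl} forces $\sigma_n:=\sup_{y\in B(x,\varepsilon')}|M_n(y)|\to 0$ a.s. Writing $g_n=u_n+M_n$ with $u_n$ as in~\eqref{eq:un}, so that $\ovl g_{n,\varepsilon'}(x)\le \ovl u_{n,\varepsilon'}(x)+\sigma_n$, and using once more $(a+b)^{1-\eta}\le a^{1-\eta}+b^{1-\eta}$ to absorb $\sigma_n^{1-\eta}\to 0$, the recursion $u_{n+1}=(1-\gamma_{n+1})u_n+\gamma_{n+1}Tq_n\ast K_{b_{n+1}}$ after taking $\sup_{y\in B(x,\varepsilon)}$ collapses into
\[
\ovl u_{n+1,\varepsilon}(x)\le (1-\gamma_{n+1})\ovl u_{n,\varepsilon}(x)+\gamma_{n+1}\Big[\frac{\ovl f_{\varepsilon'}(x)^\eta\,\ovl u_{n,\varepsilon'}(x)^{1-\eta}}{Z_n^{1-\eta}}+r_n\Big],
\]
with $r_n\to 0$ a.s.

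A Robbins--Monro comparison (Lemma~\ref{lem:rm-determ}) together with $Z_n\to 1$ (Proposition~\ref{prop:minZ} sharpened by~\eqref{eq:Zn}) then gives, writing $U_\varepsilon:=\varlimsup_n\ovl u_{n,\varepsilon}(x)$,
\[
U_\varepsilon\le \ovl f_{\varepsilon'}(x)^\eta\,U_{\varepsilon'}^{1-\eta},\qquad \forall \varepsilon'>\varepsilon.
\]
Since $\varepsilon\mapsto U_\varepsilon$ is nondecreasing, it has at most countably many discontinuities; at any right-continuity point $\varepsilon$, letting $\varepsilon'\downarrow\varepsilon$ and using continuity of $f$ yields $U_\varepsilon\le \ovl f_\varepsilon(x)^\eta U_\varepsilon^{1-\eta}$, hence $U_\varepsilon\le \ovl f_\varepsilon(x)$ whenever $U_\varepsilon<\infty$. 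A right approximation through such continuity points, combined with right continuity of $\ovl f_\cdot(x)$ (inherited from continuity of $f$), extends the bound to every $\varepsilon>0$. Since $\ovl g_{n,\varepsilon}(x)\le \ovl u_{n,\varepsilon}(x)+\sigma_n$ with $\sigma_n\to 0$ a.s., the claim follows.

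The main technical obstacle is the a priori finiteness $U_\varepsilon<\infty$, needed in order to divide $U_\varepsilon^{1-\eta}$ into $U_\varepsilon$. A preliminary crude bound can be obtained by iterating the same recursion with $\ovl f_{\varepsilon'}(x)\le \|f\|_\infty$ together with an a priori control on $\ovl u_{n,\varepsilon}(x)$ inherited from $\int u_n\le 1$ (since $\int Tq_n\le 1$ by Hölder) and the smoothing scale of $K_{b_n}$; alternatively, one exploits $\int(g_n-f)_+\to 0$ a.s.\ (which follows from $Z_n\to 1$ and Proposition~\ref{prop:minorationgparf}) to preclude arbitrarily large local excesses of $g_n$ over $f$.
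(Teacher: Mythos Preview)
Your local analysis---the subadditivity split $Tq_n\le Z_n^{-(1-\eta)}Tg_n+\lambda_n^{1-\eta}Tq_0$, the ball/complement decomposition of the convolution, the tail control via Assumption~\ref{hyp:K}-\ref{hyp:K:bound} and H\"older, and the final passage $U_\varepsilon\le \ovl f_{\varepsilon'}^\eta U_{\varepsilon'}^{1-\eta}$ followed by $\varepsilon'\downarrow\varepsilon$---matches the paper's argument essentially step for step. The paper works with $Tq_n$ rather than $Tg_n$ and handles the far-field differently (via the tail of $f$ beyond $\|y\|>n^p$ instead of kernel decay), but these are cosmetic differences.

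The genuine gap is exactly the one you flag in your last paragraph, and neither of your proposed fixes closes it. The $L^1$ control $\int(g_n-f)_+\to 0$ does not preclude local sup-norm blowup: $g_n$ is a mixture of kernels with shrinking bandwidths and unbounded weights, so spikes of height $\sim b_n^{-d}$ on sets of measure $\sim b_n^{d}$ are compatible with small $L^1$ excess. Your other suggestion---``iterate the same recursion with $\ovl f_{\varepsilon'}\le\|f\|_\infty$ together with $\int u_n\le 1$''---does not close either: the recursion for $\ovl u_{n,\varepsilon}$ feeds on $\ovl u_{n,\varepsilon'}$ at a strictly larger scale, so bounding one scale forces you outward indefinitely, and $\int u_n\le 1$ gives no pointwise control.

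The paper resolves this by first proving a \emph{global} a priori bound on $\theta_n:=\sup_{\|x\|\le n^p}q_n(x)$. Writing $u_{n+1}(x)\le\sum_i\psi_{i+1}^n\gamma_i\|K_{b_i}\ast Tq_{i-1}\|_\infty$ and estimating $\|K_{b_i}\ast Tq_{i-1}\|_\infty\le C\theta_{i-1}^{1-\eta}+o(1)$ (the $o(1)$ coming from the $f$-tail beyond $\|y\|>i^p$), one gets $\theta_{n+1}\le C' S_n + r_n$ with $S_n:=\sum_i\psi_{i+1}^n\gamma_i\theta_i^{1-\eta}$. This \emph{closes} the loop, because $S_n$ satisfies the self-referential recursion
\[
S_{n+1}\le (1-\gamma_{n+1})S_n+\gamma_{n+1}C'^{2-\eta}S_n^{1-\eta}+\gamma_{n+1}C'r_n^{1-\eta},
\]
whose sublinear drift $S_n^{1-\eta}$ lets one trap $S_n$ below a constant by a one-line induction. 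Once $\theta_n$ (hence $\ovl u_{n,\varepsilon'}$) is known to be bounded, your local argument goes through verbatim. That boundedness step is the missing ingredient in your proposal.
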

  \begin{proof}
  Let $B\subset\Omega$ such that $\bP(B) =1$, and~\ref{eq:Zn} and Proposition~\ref{prop:mtgl} hold for every $\omega\in B$.
  In the whole proof, we fix $\omega\in B$
  From~(\ref{eq:un}), we get for any $x\in\bR^d$
  \begin{align}\label{vnsumxxx}
  u_{n+1} (x)
  \le \sum_{i=1}^n\psi_{i+1}^n  \gamma_i\|K_{b_i} *  Tq_{i-1}\|_\infty.
  \end{align}
  Set
  \begin{align*}
  &\theta_n=\max_{\|x\|\le n^p} q_n(x)
  \end{align*}
  and notice that 
  \begin{align*}
  \|K_{b_{n+1}}*Tq_n\|_\infty 
  &=\sup_{x\in\bR^d}\int K_{b_{n+1}}(x-y) f(y)^\eta q_n(y)^{1-\eta}dy \\
  &\le  \|f\|_\infty^\eta  \theta_n^{1-\eta}  
  +\sup_{x\in\bR^d}\int_{\|y\|> n^p} K_{b_n}(x-y) f(y)^\eta  q_{n-1}(y)^{1-\eta}dy.
  \end{align*}
  For any $x\in\bR^d$, using Assumption~\ref{hyp:fq}-\ref{hyp:fq:fbound}:
  \begin{align*}
  \int_{\|y\|> n^p} K_{b_n}(x-y) f(y)^\eta  q_{n-1}(y)^{1-\eta}dy 
  &\le b_n^{-d}\|K\|_\infty\int_{\|y\|> n^p} f(y)^\eta q_{n-1}(y)^{1-\eta}dy \\
  &\le b_n^{-d}\|K\|_\infty \left(\int_{\|y\|> n^p} f(y)dy\right)^\eta~~~\text{(H\"older)}\\
  &\le  \|K\|_\infty b_n^{-d} C_f^\eta n^{-p\eta r}.
  \end{align*}
  Finally, (\ref{vnsumxxx}) yields 
  \begin{align*}
  &u_{n+1}(x) \le
   C\sum_{i=1}^n\psi_{i+1}^n\gamma_i\theta_i^{1-\eta}
   +  C\sum_{i=1}^n\psi_{i+1}^n\gamma_i b_i^{-d}i^{-p\eta r}\,,
  \end{align*}
  and for $\norm{x}\le n^p$, using~\ref{eq:Zn}, there exists a constant $C_0>0$ such that:
  \[
    \theta_n(x)\le C_0 \sup _{\norm y\le n^p} g_n(y) \le C_0\sup_{\norm y\le n^p} {u_n(y)} +C_0\sup_{\|y\|\le n^p} |g_{n}(y)-u_{n}(y)|\,.
  \]
  Then, with $C'=  C{C_0}$ and $g_n - u_n = M_n$, we obtain:
  \begin{align*}
  &\theta_{n+1} \le
   C'\sum_{i=1}^n\psi_{i+1}^n\gamma_i\theta_i^{1-\eta}
   +  C'\sum_{i=1}^n\psi_{i+1}^n\gamma_i b_i^{-d}i^{-p\eta r}+{C_0}\sup_{\|x\|\le (n+1)^p} |M_{n+1}(x)|\,.
  \end{align*}
  By Proposition~\ref{prop:mtgl}, the last term tends to zero.
Lemma~\ref{lem:rm-determ} implies that the second term tends to zero for $p$ large enough. 
  Denote by $r_n$ the sum of these terms, and the first sum by $S_n$, that is: 
  $S_n:= \sum_{i=1}^n\psi_{i+1}^n\gamma_i\theta_i^{1-\eta}$.  
  In order to prove that $\theta_n$ is bounded, it suffices to prove that $S_n$ is bounded. We obtain:
  \begin{align*}
  S_{n+1} 
  &= (1-\gamma_{n+1})S_n +\gamma_{n+1}C' \theta_n^{1-\eta} \\
  &\le (1-\gamma_{n+1})S_n +\gamma_{n+1}C' (C'S_n+r_n)^{1-\eta} \\
  &\le (1-\gamma_{n+1})S_n +\gamma_{n+1}C'^{2-\eta} S_n^{1-\eta}+\gamma_{n+1}C'r_n^{1-\eta} 
  \end{align*}
  Consider  a constant $C_1$ such that 
  \begin{align*}
  & C'^{2-\eta}C_1^{1-\eta}+C'r_n^{1-\eta} \le C_1,~~~~n\ge 1\\
  &S_1\le C_1.
  \end{align*}
  Such a $C_1$ exists, and one shows by induction that $S_n\le C_1$ for all $n$. 
So, $\theta_n$ is bounded as well for all $p$.
  Consider $0<\varepsilon$, taking the sup on $B(x,\varepsilon)$ in (\ref{eq:recun}), we get
  \begin{align*}
  \ovl u_{n+1,\varepsilon}(x) 
  &\le  (1 -\gamma_{n+1}) \ovl u_{n,\varepsilon}(x)
    + \gamma_{n+1} \sup_{y\in B(x,\varepsilon)} K_{b_{n+1}} *  Tq_n(y).
  \end{align*}
  But, for any $\varepsilon'>\varepsilon$
  \begin{align*}
  K_{b_{n+1}} *  Tq_n(y)
  &=\int K_{b_n}(y-z) f(z)^{\,\eta} q_{n-1}(z)^{1-\eta}\1_{\|z-x\|\le\varepsilon'}dz\\
  &\phantom{KKK}+\int K_{b_n}(y-z) f(z)^\eta q_{n-1}(z)^{1-\eta}\1_{\|z-x\|>\varepsilon'}dz \\
  &\le \ovl f_{\varepsilon'}(x)^{\eta}\, \ovl q_{n,\varepsilon'}(x)^{1-\eta}  (K_{b_{n+1}} *  \1_{\|.\|\le\varepsilon'})(y-x)\\
  &\phantom{KKK}+\|f\|_\infty\Big(\int K_{b_n}(y-z)^{1/\eta} \1_{\|z-x\|>\varepsilon'}dz\Big)^\eta
  \,.
  \end{align*}
  Using Lemma~\ref{lem:reste-noyau}, the convolution in term first term converges
  to one uniformly on $y\in B(x,\varepsilon)$. 
  As far as the second term is concerned, one has, for $y\in B(x,\varepsilon)$,
  using Assumption~\ref{hyp:K}.(\ref{hyp:K:bound})
  \begin{align*}
  \int K_{b_n}(y-z)^{1/\eta} \1_{\|z-x\|>\varepsilon'}dz
  &\le \int_{\|t\|\ge\varepsilon'-\varepsilon} K_{b_n}(t)^{1/\eta}\dr dt\\
  &\le \int_{\|t\|\ge \varepsilon'-\varepsilon} \Big(b_n^{-d}(\norm{t}/b_n)^{-(r+d)}\Big)^{1/\eta}\dr t\\
  &\le C(\varepsilon,\varepsilon') b_n^{r/\eta}
  \end{align*}
  which tends to zero, since $\int_{\reels^d} \frac 1{1+\norm{t}^{d+\delta}}\dr t$ is bounded for every $\delta>0$. Finally we obtain (we omit the argument $x$)
  \begin{align*}
  \ovl u_{n+1,\varepsilon}
  \le (1 -\gamma_{n+1}) \ovl u_{n,\varepsilon}
    + \gamma_{n+1} \kappa_{n+1}\Big(\ovl f_{\varepsilon'}^{\,\eta} \ovl q_{n,\varepsilon'}^{\,1-\eta}+r_n\Big)
  \end{align*}
  for some sequence $r_n\to 0$ depending on $\varepsilon, \varepsilon'$,
  and $\kappa_n\to 1$.
  The boundness of $\theta_n$ guarantees that the l.h.s. is finite.
  By Lemma~\ref{lem:rm-determ} we have, omitting the argument $x$, 
  \begin{align*}
  \varlimsup \ovl u_{n,\varepsilon} 
  &\le \ovl f_{\varepsilon'}^{\,\eta} \big(\varlimsup \ovl q_{n,\varepsilon'}\big)^{1-\eta}.
  \end{align*}
  Since $\varlimsup \ovl u_{n,\varepsilon}=\varlimsup \ovl q_{n,\varepsilon} =\varlimsup \ovl g_{n,\varepsilon}$ (cf.~\ref{eq:Zn} and Proposition~\ref{prop:mtgl}), 
  the bounded numbers $u_\varepsilon=\varlimsup \ovl g_{n,\varepsilon} $ satisfy
  \begin{align*}
  u_\varepsilon &\le \ovl f_{\varepsilon'}^{\,\eta} u_{\varepsilon'}^{1-\eta}
  \end{align*}
  for any $\varepsilon'>\varepsilon$. The function $\varepsilon\mapsto u_\varepsilon$ is increasing.
  Letting $\varepsilon$ decrease to some value $t$ and then $\varepsilon'$ decrease to $t$, 
  we get that $u_{t+} :=\lim_{s\downarrow t}u_s$ satisfies
  \begin{align*}
  u_{t+} &\le \ovl f_t^{\,\eta} u_{t+}^{1-\eta}
  \end{align*} 
  Hence $u_{t+}\le \ovl f_t$ for all $t$, which implies that $u_t\le \ovl f_t$.
  \end{proof}
  \begin{proof}[Proof of Proposition~\ref{prop:majoration}]
    Suppose that the proposition doesn't hold. Then, there exists a bounded sequence $(x_n)$ and $\delta>0$ such that:
\[
   \p{g_{n}(x_n) -f (x_n)} \ge \delta\, .
\]
$(x_n)$ admits a converging subsequence. Hence, we can suppose without loss of generality that $x_n \to x$.
By continuity of $f$ there exists $\varepsilon>0, n_0\ge 0$ such that:
\[
  f(x_n)\ge \ovl f_{\,\varepsilon}(x)-\frac \delta2\,, ~~~~~ n\ge n_0\,.
\]
And for $n_1$ big enough:
\[
    g_{n} (x_n)\le \ovl g_{\,n,\varepsilon} (x)\,,~~~~~n\ge n_1\,.
\]
Then, for all $n\ge \max(n_0,n_1)$, we have:
\[
  \delta\le\p{  \ovl g_{\,n,\varepsilon} (x) -\ovl f_{\,\varepsilon}(x)} +\frac \delta2 \,.
\]
Now, using Lemma~\ref{lemupb}, we obtain $\delta\le \delta/2$, which finishes the proof.
  \end{proof}
  
\subsection{Proof of Theorem~\ref{th:main}}
  When $f=f_u$, for any compact set $A\subset \reels^d$, we obtain
  \[\begin{split}
    \lim_{n\to\infty} \sup_{x\in A}\abs{g_n(x) - f(x)} & \le \varlimsup \max\p{\sup_{x\in A}\p{g_n(x)-f(x)},-\inf_{x\in A}\p{g_n(x)-f(x)}}\\
    &\le \max\p{\varlimsup\sup_{x\in A}\p{g_n(x)-f(x)},\varlimsup-\inf_{x\in A}\p{g_n(x)-f(x)}}\\
    &\le \max\p{\varlimsup\sup_{x\in A}\p{g_n(x)-f(x)},-\varliminf\inf_{x\in A}\p{g_n(x)-f(x)}}\,.\\
  \end{split}
    \]
  By Propositions~\ref{prop:minorationgparf} and~\ref{prop:majoration}, Theorem~\ref{th:main} is proven.

  When $f\neq f_u$, according to the Step 0, in Sec.~\ref{sec:sketch} the result still holds.

\subsection{Proof of Corollary~\ref{coro:TCL}}
    Let $w_{i,n}$ be defined by
    \[
        w_{i,n} = \frac 1{\sqrt{n}} \p{ w_i h(X_i)- \int fh}\,,
    \]
and check that $\espcond{w_{i,n}}{\cF_{i-1}} = 0$ a.s.
Following from \cite[Corollary 3.1]{hall2014martingale}, it suffices to show
    \begin{align}
       &\lim_{n\to\infty} \sum_{i=1}^n\espcond{w_{i,n}^2}{\cF_{i-1}} = \int fh^2 - \p{\int fh}^2\,,\, \text{in probability} \label{eq:TCL2} \\
         &\lim_{n\to\infty} \sum_{i=1}^n\espcond{w_{i,n}^2 \1_{|w_{i,n}|>\varepsilon}}{\cF_{i-1}} = 0 \label{eq:TCL3} \,,\, \text{in probability}.
    \end{align}
Note that 
\[        \espcond{w_{i+1,n}^2}{\cF_i} = \frac 1n \p{\int \frac {f^2}{q_i} h^2 - \p{\int fh}^2}\,.
    \]
    Hence, using the Cesaro theorem, \eqref{eq:TCL2} is a consequence of
    \begin{equation}\label{eq:sufTCL2}
    \lim_{i\to\infty} \int \frac {f^2}{q_i}h^2 = \int fh^2\, ,\, \text{almost surely.}
\end{equation}
Since $\inf_{x\in A} f(x) >0$, by Theorem~\ref{th:main}, there exists $i_0\ge 0$, such that 
$
    \inf_{i\ge i_0,x\in A}q_i(x) > 0\,,
$
for every $\omega\in B$ such that $\bP(B)=1$.  Moreover, for $i\le i_0$,  $q_i \ge \inf_{i\le i_0}\lambda_i>0$.
Hence, we obtain, for all $\omega\in B$,  $\inf_{i\ge 1,x\in A}q_i(x) > 0 $ and therefore $ M:=  \sup_{i\ge 1,x\in A}(f(x) / q_i(x)) <\infty$. Hence, for all $\omega \in B$, the function $ f^2 /q_i h$ is bounded by $ M fh^2  $ which is integrable. In addition, by Theorem \ref{th:main}, for all $\omega\in B$ and all $x\in A$, $q_i(x)\to f(x)$. The Lebesgue dominated convergence theorem implies \eqref{eq:sufTCL2}. 

For the rest of the proof, we fix an arbitrary $\omega \in B$. Remarking that $\int |h| f $ is bounded, when $n $ is large enough ($\varepsilon \sqrt n \ge \int f |h| $) we obtain that   
\[
\begin{split}
 \{x\in \bR^d\,:\,  \abs{\frac 1{\sqrt{n}} \p{ \frac{fh}{q_i}(x)- \int fh}} \ge \varepsilon  \} & = \{x\in \bR^d\,:\,   \frac{fh}{q_i}(x)  \ge \sqrt n \varepsilon  + \int fh \}
 \end{split} 
\]
Taking $  | \int f h | \le \varepsilon \sqrt n / 2  $ yields
\[
\begin{split}
 \{x\in \bR^d\,:\,  \abs{\frac 1{\sqrt{n}} \p{ \frac{fh}{q_i}(x)- \int fh}} \ge \varepsilon  \}& \subset \{x\in A\,:\,fh(x) \ge \tfrac\varepsilon 2 \sqrt{n} q_{i}(x)    \} \\
  & \subset \{ x\in A\,:\,M h(x) \ge \tfrac\varepsilon 2 \sqrt{n}    \}=: A_{n}\,.
\end{split} 
\]
Hence, using Young inequality, we obtain
\[\begin{split}
    \espcond{w_{i+1,n}^2 \1_{\abs{w_{i+1,n}} >\varepsilon}}{\cF_{i}}&\le \frac 2n \int \frac{f^2(x)h^2(x)}{q_i(x)} \1_{A_{n}}(x)\dr x + \frac 2n \p{\int fh}^2 \int \1_{A_{n}}(x)\dr x\\
    & \le \frac{2}n M \int  {f(x)h^2(x)} \1_{A_{n}}(x)\dr x + \frac {2(\int fh)^2}n \int_A \1_{A_{n}}(x)\dr x\,.\\
\end{split}
\]
Finally, we obtain
\[
\begin{split}
\sum_{i=0}^{n-1}\espcond{w_{i+1,n}^2 \1_{\abs{w_{i+1,n}} >\varepsilon}}{\cF_{i}}&\le 2 ( M \vee  (\int fh ) ^2  ) \p{\int  {f(x)h^2(x)} \1_{A_{n}}(x)\dr x +  \int_{A_n} 1 }\,.
\end{split}
\]
By the Lebesgue dominated convergence theorem, the r.h.s. of the above inequality converges to $0$ as $n\to\infty$. Since $\omega$ is arbitrary fixed in $B$,~\eqref{eq:TCL3} holds.  Consequently, the proof is finished.

 \section{Technical results}
\begin{lemma}
   \label{lem:reste-noyau}
  Let $K:\bR^d\to [0,+\infty)$ be a continuous function such that $\int K=1$ and $K(0)>0$. Define $K_b(x)=b^{-d}K(x/b)$ for every $b>0$.
  For every $\varepsilon'> \varepsilon>0$,
  $$
  \lim_{b\downarrow 0}\inf_{B(0,\varepsilon)} K_b  * \1_{B(0,\varepsilon')}  =1\,.
  $$
\end{lemma}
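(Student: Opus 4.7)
The plan is to reduce the convolution to an integral of $K$ over a growing family of balls by a change of variables, then invoke $\int K = 1$.

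First, for $x \in B(0,\varepsilon)$ I would write out the convolution
\[
(K_b * \1_{B(0,\varepsilon')})(x) = \int_{B(0,\varepsilon')} b^{-d} K\!\left(\tfrac{x-y}{b}\right) dy
\]
and perform the substitution $u = (y-x)/b$. The Jacobian is $b^d$, so the integral becomes $\int_{B(-x/b,\,\varepsilon'/b)} K(-u)\, du$. Using the symmetry of $B(0,R)$ under $u \mapsto -u$, I will compare this with an integral of $K$ itself rather than of $K(-\cdot)$ by extracting a central sub-ball.

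The key geometric observation is that whenever $|x| \le \varepsilon$, the triangle inequality gives
\[
B\!\left(0,\tfrac{\varepsilon'-\varepsilon}{b}\right) \subset B\!\left(-\tfrac{x}{b},\tfrac{\varepsilon'}{b}\right),
\]
because $|u| < (\varepsilon'-\varepsilon)/b$ implies $|u+x/b|\le |u|+\varepsilon/b < \varepsilon'/b$. Since $K \ge 0$, this yields the uniform lower bound
\[
\inf_{x\in B(0,\varepsilon)} (K_b * \1_{B(0,\varepsilon')})(x) \ge \int_{B(0,(\varepsilon'-\varepsilon)/b)} K(-u)\, du = \int_{B(0,(\varepsilon'-\varepsilon)/b)} K(u)\, du,
\]
the last equality by the change of variables $u \mapsto -u$ (which preserves the centered ball).

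As $b \downarrow 0$, the radius $(\varepsilon'-\varepsilon)/b \to +\infty$, so by monotone convergence and $\int_{\bR^d} K = 1$, the right-hand side tends to $1$. On the other hand, the trivial upper bound is
\[
(K_b * \1_{B(0,\varepsilon')})(x) \le \int K_b = 1,
\]
for every $x$. Squeezing between the two bounds gives $\lim_{b\downarrow 0} \inf_{B(0,\varepsilon)} K_b * \1_{B(0,\varepsilon')} = 1$. There is no real obstacle beyond this elementary change-of-variables argument; the only minor care needed is ensuring the comparison ball inclusion is uniform in $x \in B(0,\varepsilon)$, which the triangle inequality above handles directly. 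Note that neither continuity of $K$ nor $K(0) > 0$ is needed for this particular statement—only $K \ge 0$, integrability, and $\int K = 1$.
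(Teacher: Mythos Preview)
Your proof is correct and follows essentially the same route as the paper: a change of variables turns the convolution into an integral of $K$ over a ball whose radius blows up as $b\downarrow 0$, the triangle inequality gives a uniform lower bound over $x\in B(0,\varepsilon)$, and monotone convergence yields the limit $1$. The paper uses the substitution $y\mapsto z/b$ (so $K$ appears directly rather than $K(-\cdot)$) and introduces an auxiliary $r<\varepsilon'-\varepsilon$, and it leaves the trivial upper bound implicit; these are purely cosmetic differences. Your remark that continuity of $K$ and $K(0)>0$ are unused here is also correct.
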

\begin{proof}
  Choose $0<r<\varepsilon'-\varepsilon$.
  For every $\|x\|<\varepsilon$ and every $\|y\|<\tfrac rb$, $\|x-by\|<\varepsilon'$, by the triangular inequality.
  Therefore,
  \begin{align*}
    \inf_{B(0,\varepsilon)}K_b  * \1_{B(0,\varepsilon')} &=  \inf_{\|x\|<\varepsilon} \int \1_{\|x-by\|<\varepsilon'}K(y)dy\ \ge  \int_{\|y\|<\frac r b}K(y)dy\,.
  \end{align*}
  Letting $b$ converge to zero, the monotone convergence theorem implies that
  the righthand side of the above inequality converges to one. 
\end{proof}
\begin{lemma}
  \label{lem:reste-noyau-bis}
  Let $K:\bR^d\to [0,+\infty)$ be a continuous function such that $\int_{\bR^d}K(x)dx=1$ and $K(0)>0$. Define $K_b(x)=b^{-d}K(x/b)$.
There exists $c>0$ such that for every $\varepsilon>0$,
  $$
  \varliminf_{b\downarrow 0} \inf_{B(0,\varepsilon)}K_b *  \1_{B(0,\varepsilon)} \ge c\,.
  $$
\end{lemma}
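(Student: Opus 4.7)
The plan is to first reduce to the case $\varepsilon=1$ by a homogeneity argument. After the substitution $u=(x-y)/b$ in the convolution, one has
\[
K_b * \1_{B(0,\varepsilon)}(x) = \int K(u)\,\1_{\|x-bu\|<\varepsilon}\,du,
\]
and setting $x'=x/\varepsilon$, $b'=b/\varepsilon$ this equals $K_{b'} * \1_{B(0,1)}(x')$. Since $b\downarrow 0$ is equivalent to $b'\downarrow 0$ and $B(0,\varepsilon)$ is mapped bijectively onto $B(0,1)$, it suffices to exhibit a universal $c>0$ such that
\[
\varliminf_{b\downarrow 0}\inf_{\|x\|<1} K_b * \1_{B(0,1)}(x)\ge c.
\]

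The main idea for the unit-radius case is a spherical cap estimate exploiting $K(0)>0$. Using continuity of $K$, I would pick $\delta,m>0$ with $K(z)\ge m$ on $\overline{B(0,\delta)}$, and for each unit vector $\nu\in\bR^d$ introduce the cap
\[
S_\nu := \{z:\|z\|\le\delta,\ \ps{\nu,z}\ge \delta/2\}.
\]
By rotational invariance of $\overline{B(0,\delta)}$, the Lebesgue measure $V:=|S_\nu|$ is a strictly positive constant independent of $\nu$, hence $\int_{S_\nu}K\ge mV$ uniformly in $\nu$.

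I would then split the infimum into two regimes. For $\|x\|\ge 1/2$, write $\hat x=x/\|x\|$ and pick any $z\in S_{\hat x}$, so that $\ps{x,z}\ge\delta/4$ and $\|z\|\le\delta$; the identity $\|x-bz\|^2=\|x\|^2-2b\ps{x,z}+b^2\|z\|^2$ then yields $\|x-bz\|^2\le \|x\|^2-b\delta/2+b^2\delta^2<\|x\|^2<1$ as soon as $b<1/(2\delta)$. Consequently $S_{\hat x}\subseteq\{z:\|x-bz\|<1\}$ and $K_b*\1_{B(0,1)}(x)\ge \int_{S_{\hat x}}K\ge mV$. For $\|x\|<1/2$, the triangle inequality gives $\|x-bz\|<1$ whenever $\|z\|<1/(2b)$, so $K_b*\1_{B(0,1)}(x)\ge \int_{\|z\|<1/(2b)}K(z)\,dz\to 1$ by monotone convergence. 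The two bounds combine to give $c:=\min(mV,1/2)$ for all sufficiently small $b$.

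The main obstacle is precisely the lack of ``margin'' near the boundary $\|x\|=1$: the argument of Lemma~\ref{lem:reste-noyau}, which relied on $\varepsilon'>\varepsilon$, no longer applies since here $\varepsilon'=\varepsilon$. The spherical cap argument circumvents this by observing that, whatever the boundary direction $\hat x$ may be, there is always a $K$-mass at least $mV$ sitting in a cone that moves $x-bz$ back into $B(0,1)$, and the rotational invariance of a ball around the origin makes this mass independent of $\hat x$.
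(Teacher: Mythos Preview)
Your proof is correct. Both arguments start from the same idea: use continuity and $K(0)>0$ to bound $K$ from below by a constant on a small ball $B(0,\delta)$, thereby reducing the convolution to a purely geometric lower bound on the measure of $\{z\in B(0,\delta):\|x-bz\|<\varepsilon\}$. The executions differ. The paper does not rescale to $\varepsilon=1$; instead it observes that the infimum over $y\in B(0,\varepsilon)$ of $\leb_d\big(B(y/b,\varepsilon/b)\cap B(0,\delta)\big)$ is attained at a boundary point $y=\varepsilon\vec v$, and then uses that the balls $B(\varepsilon\vec v/b,\varepsilon/b)$ are increasing in $1/b$ and converge to the half-space $\{\langle\vec v,\cdot\rangle\ge 0\}$, giving directly $c=(K(0)/2)\,\leb_d(H\cap B(0,\delta))$. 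Your route replaces this monotone-limit argument by an explicit spherical-cap inclusion $S_{\hat x}\subset\{z:\|x-bz\|<1\}$ valid for all $b<1/(2\delta)$, at the price of a case split on $\|x\|$ and a somewhat smaller constant $c=\min(mV,1/2)$. Your preliminary scaling step is a nice simplification the paper does not make explicit, and in fact shows that the quantity $\varliminf_{b\downarrow 0}\inf_{B(0,\varepsilon)}K_b*\1_{B(0,\varepsilon)}$ is independent of $\varepsilon$.
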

\begin{proof}
By continuity of $K$, there exists $\delta>0$ such that
  $K(x)\ge K(0)/2$ for every $x\in B(0,\delta)$. Denoting by $\leb_d$ the Lebesgue measure on $\bR^d$,
\begin{align*}
\kappa_b(\varepsilon,\varepsilon) 
&= \inf_{y\in B(0,\varepsilon)}\int_{\bR^d} \1_{B(0,\varepsilon)}(y-bx)K(x)dx\\
&\ge \inf_{y\in B(0,\varepsilon)}\frac{K(0)}2\leb_d(B(y/b,\varepsilon/b)\cap B(0,\delta))\\
&= \frac{K(0)}2\leb_d(B(\varepsilon \vec{v}/b,\varepsilon/b)\cap B(0,\delta))\,
\end{align*}
where $\vec{v}$ denotes any unit norm vector of $\bR^d$. The sequence of sets $(\varepsilon/b)B( \vec{v},1)$ is increasing
  and converges to the half space $E:=\{x:\ps{\vec v,x}\ge 0\}$ as $b\downarrow 0$.
Passing to the limit, the result follows by setting $c:=(K(0)/2)\leb_b(H\cap B(0,\delta))$.  
\end{proof}
\begin{lemma}\label{lem:psigamma}

  Let $(\psi_k^n)_{n\ge, k\ge 1}$ be defined by~(\ref{eq:psi}) and $\psi_n^{n-1}=1$.
    Let $(\gamma_n)_{n\ge 1}$ be a sequence satisfying Assumption~\ref{hyp:rate}--~\ref{hyp:rate:gamma}. Then, there exists a constant $C>0$ satisfying
    \[
    \psi_{n+1}^n\gamma_i \le C \gamma_n\,,
    \]
    for every $1\le i \le n$.
\end{lemma}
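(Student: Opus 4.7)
The plan is to show that the ratio $P_n(i) := \gamma_i \psi_{i+1}^n / \gamma_n$ is nonincreasing in $n$ for $n \ge \max(i, n_\gamma)$, starting from the value $1$ at $n = i$, and then to handle the finitely many small indices separately.

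First I would rewrite the key hypothesis $\gamma_n - \gamma_{n+1} \le \gamma_n \gamma_{n+1}$ from Assumption~\ref{hyp:rate}--\ref{hyp:rate:gamma} in a more convenient form. Dividing by $\gamma_{n+1}$ gives
\[
\frac{\gamma_n(1-\gamma_{n+1})}{\gamma_{n+1}} \,=\, \frac{\gamma_n}{\gamma_{n+1}} - \gamma_n \,\le\, 1, \qquad n \ge n_\gamma.
\]

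Next, observe the telescoping relation
\[
P_{n+1}(i) \,=\, \frac{\gamma_i \psi_{i+1}^n (1-\gamma_{n+1})}{\gamma_{n+1}} \,=\, P_n(i) \cdot \frac{\gamma_n(1-\gamma_{n+1})}{\gamma_{n+1}},
\]
which, together with the previous display, shows that $P_{n+1}(i) \le P_n(i)$ for every $n \ge n_\gamma$. For $i \ge n_\gamma$ we have $P_i(i) = 1$ (empty product convention), so iterating the inequality yields $P_n(i) \le 1$, i.e.\ $\gamma_i \psi_{i+1}^n \le \gamma_n$ for all $n \ge i \ge n_\gamma$.

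For the boundary case $i < n_\gamma \le n$, I would factor $\psi_{i+1}^n = \psi_{i+1}^{n_\gamma} \, \psi_{n_\gamma+1}^n$, bound $\gamma_i \psi_{i+1}^{n_\gamma} \le \gamma_1$, and apply the inequality just obtained with starting index $n_\gamma$, yielding $\psi_{n_\gamma+1}^n \le \gamma_n / \gamma_{n_\gamma}$. Hence $\gamma_i \psi_{i+1}^n \le (\gamma_1 / \gamma_{n_\gamma}) \, \gamma_n$ on this range. Finally, for $n < n_\gamma$ only finitely many pairs $(i,n)$ are involved, and since the $\gamma_k$ are strictly positive on this finite set, the ratio $\gamma_i \psi_{i+1}^n / \gamma_n$ is bounded by some finite constant. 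Taking the maximum of the three constants obtained in the three regimes gives the desired $C$.

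The only mildly delicate point is verifying that the recursion $P_{n+1}(i) \le P_n(i)$ is a direct consequence of the Robbins–Monro style condition in the assumption; once that algebraic identity is in hand, the rest of the argument is a bookkeeping exercise on the starting index. There is no real analytical obstacle here.
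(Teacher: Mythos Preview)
Your proof is correct and follows essentially the same strategy as the paper's: both reduce to the one-step inequality equivalent to $\gamma_n(1-\gamma_{n+1})/\gamma_{n+1}\le 1$ and then handle the finitely many indices below $n_\gamma$ by a crude factorization. The only cosmetic difference is that you induct forward on $n$ (via $P_{n+1}(i)\le P_n(i)$) whereas the paper inducts backward on $i$ for fixed $n$; your multiplicative-ratio formulation is arguably the cleaner of the two.
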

\begin{proof}

        Let $n_0$ be the rank given by Assumption~\ref{hyp:rate}--\ref{hyp:rate:gamma}, such that $\gamma_i -\gamma_{i+1}\le \gamma_i\gamma_{i+1}$ for every $i\ge n_0$.
        Let $ n \ge n_0$.
 The result holds when $i=n$. 
    Now, suppose that for a given $n_0+1 \le i_0\le n$, 
        \[
    \psi_{i_0+1}^n\gamma_{i_0} \le \gamma_n\,.
    \]
    We obtain
    \[
    \begin{split}
            \psi_{i_0}^n\gamma_{i_0-1} &= (1-\gamma_{i_0})(\gamma_{i_0} + (\gamma_{i_0-1} - \gamma_{i_0}) )\psi_{i_0+1}^n     \\&\le \gamma_n{(1-\gamma_{i_0})} + \psi_{i_0+1}^n (\gamma_{i_0-1} -\gamma_{i_0})(1-\gamma_{i_0})\\
            &\le \gamma_n(1-\gamma_{i_0})(1+\gamma_{i_0-1})\\
            &\le \gamma_n(1-\gamma_{i_0}\gamma_{i_0-1} + (\gamma_{i_0-1} -\gamma_{i_0}))\\
            &\le \gamma_n\,.
    \end{split}
    \]
    Iterating the latter result, $\psi_{i+1}^n\gamma_i \le \gamma_n$ for every, $n\ge i\ge n_0$. For a given $i<n_0$,
    \[
    \psi_{i+1}^n\gamma_i = \psi_{n_0+1}^n \gamma_{n_0} \frac{\psi_{i+1}^n \gamma_i}{\psi_{n_0+1}^n \gamma_{n_0}}\le \gamma_n \frac{\gamma_1}{\gamma_{n_0}}\,.
    \]
    Hence, Lemma~\ref{lem:psigamma} is proven.
\end{proof}

  \begin{lemma}
    \label{lem:rm-determ}
  Consider a real sequence $(a_n)_{n\ge 1}$, and let $(\gamma_n)_{n\ge 1}$ be a positive sequence converging to zero, and such that
  $\sum_n\gamma_n=+\infty$. Let $(\psi_k^n)_{n\ge k,\ge 1}$ be defined by~(\ref{eq:psi}) and $\psi_n^{n-1}=1$.
  For any $s_0\in \mathbb R$, the sequence $(s_n)_{n\ge 0}$ given by:
  \begin{align*}
  s_n&=(1-\gamma_n)s_{n-1} +  \gamma_n a_n, 
  \end{align*}
  satisfies for every $n\ge 1$:
  \begin{align}\label{sneq}
  s_n =  \psi_{1}^ns_0+\sum_{i=1}^n \psi_{i+1}^n\gamma_i a_i\,.
  \end{align}
  In addition,
  $\varliminf a_n\le \varliminf s_n \le \varlimsup s_n\le \varlimsup s_n\,.$

  \end{lemma}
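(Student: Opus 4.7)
\textbf{Proof proposal for Lemma~\ref{lem:rm-determ}.}

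The plan is to first establish the closed form (\ref{sneq}) by induction on $n$, then deduce the asymptotic bracketing from two ingredients: the fact that $\psi_1^n \to 0$ and a telescoping identity for the weights $\psi_{i+1}^n\gamma_i$. For the induction, the base case $n=1$ gives $s_1=(1-\gamma_1)s_0+\gamma_1 a_1=\psi_1^1 s_0+\psi_2^1 \gamma_1 a_1$ using the convention $\psi_{n+1}^n=1$. Assuming the formula for $n-1$, multiplying by $(1-\gamma_n)$ and adding $\gamma_n a_n$ yields the formula for $n$, since $(1-\gamma_n)\psi_{i+1}^{n-1}=\psi_{i+1}^n$ for all $i\le n-1$ and $\psi_{n+1}^n\gamma_n=\gamma_n$.

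Next, I would prove two key facts. First, because $\gamma_n\to 0$ one has $\gamma_n\in(0,1)$ eventually, and then $\log(1-\gamma_j)\le -\gamma_j$ together with $\sum_j \gamma_j=+\infty$ gives $\psi_1^n=\prod_{j=1}^n (1-\gamma_j)\to 0$ (the finitely many initial terms only contribute a nonzero multiplicative constant). Second, the telescoping identity
\begin{equation*}
\sum_{i=1}^n \psi_{i+1}^n\gamma_i=\sum_{i=1}^n(\psi_{i+1}^n-\psi_i^n)=\psi_{n+1}^n-\psi_1^n=1-\psi_1^n\xrightarrow[n\to\infty]{}1,
\end{equation*}
where I used $\psi_i^n=(1-\gamma_i)\psi_{i+1}^n$. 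The same argument, applied with starting index $N$ instead of $1$, shows $\sum_{i=N}^n\psi_{i+1}^n\gamma_i=1-\psi_{N}^{n}$ (with the analogous convention), which converges to $1$ as $n\to\infty$ for any fixed $N$.

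For the asymptotic inequalities, I focus on $\varlimsup s_n\le L:=\varlimsup a_n$ (the lower bound is symmetric). The case $L=+\infty$ is trivial, so fix $\varepsilon>0$ and choose $N$ large enough that $a_i\le L+\varepsilon$ for all $i\ge N$. Split the sum in (\ref{sneq}) into $i<N$ and $i\ge N$. For the first block, for each fixed $i$ the factor $\psi_{i+1}^n\to 0$, so $\sum_{i=1}^{N-1}\psi_{i+1}^n\gamma_i a_i\to 0$. Similarly $\psi_1^n s_0\to 0$. For the second block, since $\psi_{i+1}^n\gamma_i\ge 0$, one has
\begin{equation*}
\sum_{i=N}^n \psi_{i+1}^n\gamma_i a_i\le (L+\varepsilon)\sum_{i=N}^n \psi_{i+1}^n\gamma_i,
\end{equation*}
valid regardless of the sign of $L+\varepsilon$. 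Taking $\varlimsup$ and using the second key fact yields $\varlimsup s_n\le L+\varepsilon$, and letting $\varepsilon\downarrow 0$ concludes. The lower bound follows by replacing $a_n$ by $-a_n$ (or repeating the argument with the reversed inequality).

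There is no real obstacle; the only mildly delicate point is ensuring the inequality $\sum w_i a_i\le c\sum w_i$ (when $a_i\le c$ and $w_i\ge 0$) is applied without a spurious positivity assumption on $c$, and verifying $\psi_1^n\to 0$ despite possibly having $\gamma_n\ge 1$ for finitely many initial indices, which is handled by isolating the eventual tail where $\gamma_n\in(0,1)$.
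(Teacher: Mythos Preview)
Your proof is correct and follows essentially the same approach as the paper's: the paper states the slightly more general recursion $s_n=\psi_{k+1}^n s_k+\sum_{i=k+1}^n\psi_{i+1}^n\gamma_i a_i$ (which amounts to your split at $N=k+1$) and uses the same telescoping identity $\sum_{i=k}^n\psi_{i+1}^n\gamma_i=1-\psi_k^n$ together with $\psi_{k+1}^n\to 0$ to bound $\varlimsup s_n$ by $\sup_{i>k}a_i$, then lets $k\to\infty$. Your version is just a more detailed write-up of the same argument, with the minor care about eventual positivity of $\psi_{i+1}^n$ made explicit.
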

  \begin{proof}An elementary induction shows that for any $k>0$
  \begin{align*}
  s_n =  \psi_{k+1}^ns_k+\sum_{i=k+1}^n \psi_{i+1}^n\gamma_i a_i\,~~~~n>k.
  \end{align*}
  In particular $\sum_{i=k}^n \psi_{i+1}^n\gamma_i=1-\psi_k^n$, and we get
  \begin{align*}
  \varlimsup_{n>k}s_n \le 0+\sup_{i>k}a_i.
  \end{align*}
  This proves that $\varlimsup s_n\le\varlimsup a_n$. The inequality $\varliminf s_n\ge\varliminf a_n$
  is proved similarly.
   \end{proof}

    We state here Theorem~19 of~\cite{delyon2021safe}:
  \begin{prop}\label{prop:freedmanx}
  Let $(\Omega, \mathcal F , (\mathcal F _j)_{j\ge 1}, \mathbb P)$ be a filtered space. 
  Let $(\xi_j)_{j\ge 1} $ be a sequence of real valued stochastic processes defined on $\mathbb R^d$, 
  adapted to $(\mathcal F_j)_{j\ge 1}$, such that for any $x\in\mathbb R^d$,
  \begin{align*}
  &\mathbb E [\xi_j(x)| \mathcal F _{j-1}] = 0,\quad \text{for all } j\ge 1 .
  \end{align*}
  Consider $\varepsilon>0$ and let  $(\tilde \xi_j)_{j\ge 1} $ be another $(\mathcal F_j)_{j\ge 1}$-adapted 
  sequence of non-negative stochastic processes defined on $\mathbb R^d$ such that for all $j\ge 1$ 
  and $x\in\mathbb R^d$
  \begin{align}\label{freedlip}
  &\sup_{\|y\|\le\varepsilon}|\xi_j(x+y)-\xi_j(x)|\le \tilde \xi_j(x) .
  \end{align}
  Let $n\ge 1$ and assume that for some $A\ge 0$, 
  there exist $m, v, u \in \mathbb R^{+}$ such that  for all $\omega\in\Omega$ and $\|x\|\le A$,	
  \begin{align}
  & \max_{j=1,\ldots, n} |\xi_j(x)|\le m\label{thmm}\\
  &\sum_{j=1} ^ n \mathbb E\big[\xi_j(x)^2|\mathcal F_{j-1}]\le v\\
  &\sum_{j=1} ^ n \mathbb E\big[\tilde \xi_j(x)|\mathcal F_{j-1}\big]\le u .
  \end{align}
  Then, for all $t\ge 0$,
  \begin{align}\label{berns}
  &\mathbb P\Big(\sup_{\|x\|\le A}\big|\sum_{j=1} ^ n \xi_j(x)\big|>t+u\Big)
  \le 4(1+ {2A}\varepsilon^{-1} )^d \exp\left(-\frac{t^2}{8(\tilde v+\tfrac23 mt)} \right) ,
  \end{align}
  with $\tilde v=\max(v,2mu)$.
  \end{prop}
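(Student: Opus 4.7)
The statement is the uniform Freedman inequality of \cite{delyon2021safe}, restated as Proposition~\ref{prop:freedmanx}. My strategy combines the classical scalar Freedman bound with an $\varepsilon$-net chaining argument on the ball $\{x : \|x\| \le A\}$, using the modulus control $\tilde\xi_j$ to transfer the pointwise estimate to a uniform one.

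\textbf{Step 1: Covering.} I would first construct a minimal $\varepsilon$-net $\mathcal N$ of the closed ball $\{x : \|x\| \le A\}$ in $\bR^d$. A standard volume-packing argument gives $|\mathcal N| \le (1 + 2A/\varepsilon)^d$. For any $x$ with $\|x\| \le A$, fix the nearest net point $x_0 \in \mathcal N$, so that $\|x-x_0\| \le \varepsilon$. Using the Lipschitz-type hypothesis~\eqref{freedlip},
\[
\Big|\sum_{j=1}^n \xi_j(x)\Big| \le \Big|\sum_{j=1}^n \xi_j(x_0)\Big| + \sum_{j=1}^n \tilde\xi_j(x_0).
\]

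\textbf{Step 2: Centering the modulus term.} The sequence $\tilde\xi_j$ is not itself a martingale difference, so I would decompose
\[
\sum_{j=1}^n \tilde\xi_j(x_0) = \underbrace{\sum_{j=1}^n \big(\tilde\xi_j(x_0) - \bE[\tilde\xi_j(x_0)\mid \cF_{j-1}]\big)}_{\tilde M_n(x_0)} + \sum_{j=1}^n \bE[\tilde\xi_j(x_0)\mid \cF_{j-1}],
\]
where the deterministic sum on the right is at most $u$ by hypothesis. Note that $\tilde\xi_j(x_0) \le 2m$ because $|\xi_j| \le m$ pointwise, so the increments of $\tilde M_n(x_0)$ are bounded by $2m$ and the conditional quadratic variation satisfies $\sum \bE[\tilde\xi_j(x_0)^2 \mid \cF_{j-1}] \le 2m \sum \bE[\tilde\xi_j(x_0)\mid \cF_{j-1}] \le 2mu$. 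Hence at each fixed net point, $\tilde M_n(x_0)$ is a martingale with the parameters $(2m, 2mu)$ required for Freedman.

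\textbf{Step 3: Two pointwise Freedman inequalities and a union bound.} For each net point $x_0$, the process $S_n(x_0) = \sum \xi_j(x_0)$ is a scalar martingale with increments bounded by $m$ and conditional quadratic variation bounded by $v$; the classical Freedman inequality gives $\bP(|S_n(x_0)|>t/2) \le 2\exp(-t^2/(8(v + mt/3)))$. Applying the same inequality to $\tilde M_n(x_0)$ yields $\bP(|\tilde M_n(x_0)|>t/2) \le 2\exp(-t^2/(8(2mu + mt/3)))$. Combining Steps 1--2,
\[
\Big\{\sup_{\|x\|\le A}|S_n(x)| > t+u\Big\}\subset \bigcup_{x_0\in\mathcal N}\Big\{|S_n(x_0)| > t/2\Big\}\cup \Big\{|\tilde M_n(x_0)| > t/2\Big\},
\]
and the union bound over $|\mathcal N| \le (1 + 2A/\varepsilon)^d$ points gives the claimed exponential bound with $\tilde v = \max(v, 2mu)$, after absorbing $mt/3 \le (2/3)mt$ to match the stated form.

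\textbf{Main obstacle.} The one genuinely non-trivial point is Step 2: the Lipschitz control $\tilde\xi_j$ is a non-negative process rather than a martingale difference, so a direct Freedman application is not available. The key observation that unblocks this is that $0 \le \tilde\xi_j \le 2m$ allows bounding its conditional variance by $2m$ times its conditional mean, which ties the quadratic variation of the centered modulus process to the available mean bound $u$. Once this relation is noted, the two Freedman inequalities are parallel and the uniform estimate assembles by the chaining in Steps 1 and 3.
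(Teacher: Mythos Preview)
The paper does not give its own proof of Proposition~\ref{prop:freedmanx}: it simply quotes the statement as Theorem~19 of~\cite{delyon2021safe}. So there is nothing in the present paper to compare your argument against. Your three-step outline (finite $\varepsilon$-net on the ball, scalar Freedman at each net point, and a second Freedman for the centred modulus process) is exactly the standard route and is, in substance, correct.

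One point deserves tightening. You assert that ``$\tilde\xi_j(x_0)\le 2m$ because $|\xi_j|\le m$ pointwise''. This is not literally implied by the hypotheses: $\tilde\xi_j$ is only assumed to \emph{dominate} the $\varepsilon$-modulus, not to equal it, and moreover the bound $|\xi_j(x)|\le m$ is stated only for $\|x\|\le A$, whereas the supremum in~\eqref{freedlip} ranges over $x_0+y$ with $\|y\|\le\varepsilon$, which may leave the ball. The clean fix is to replace $\tilde\xi_j(x_0)$ in Step~1 by $\tilde\xi_j'(x_0):=\min(\tilde\xi_j(x_0),2m)$: for any $x$ with $\|x\|\le A$ and net point $x_0$ with $\|x-x_0\|\le\varepsilon$, both $x$ and $x_0$ lie in the ball, so $|\xi_j(x)-\xi_j(x_0)|\le 2m$ as well as $\le\tilde\xi_j(x_0)$, hence $\le\tilde\xi_j'(x_0)$. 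Since $\tilde\xi_j'\le\tilde\xi_j$, the bound $\sum_j\bE[\tilde\xi_j'(x_0)\mid\cF_{j-1}]\le u$ is preserved, and now $0\le\tilde\xi_j'\le 2m$ legitimately, so your Step~2 goes through verbatim with $\tilde\xi_j'$ in place of $\tilde\xi_j$. With this adjustment the argument is complete and yields the stated constants.
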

  \end{appendix}

\end{document}